\title{$L^p$-Fourier and Fourier-Stieltjes algebas for locally compact groups}
\author{Matthew Wiersma}
\address{Department of Pure Mathematics, University of Waterloo, Waterloo, ON, Canada N2L 3G1}
\email{mwiersma@uwaterloo.ca}
\newtheorem{theorem}{Theorem}[section]
\newtheorem{corollary}[theorem]{Corollary}
\newtheorem{prop}[theorem]{Proposition}
\newtheorem{lemma}[theorem]{Lemma}
\theoremstyle{remark}
\newtheorem{remark}[theorem]{Remark}
\theoremstyle{definition}
\newtheorem{example}[theorem]{Example}
\newcommand{\fn}{\!:}
\newcommand{\C}{\mathbb C}
\newcommand{\R}{{\mathbb R}}
\newcommand{\N}{{\mathbb N}}
\newcommand{\Hi}{\mathcal{H}}
\newcommand{\lla}{\left\langle}
\newcommand{\rra}{\right\rangle}
\newcommand{\mc}{\mathcal}
\newcommand{\tn}{\textnormal}
\newcommand{\Z}{\mathbb Z}
\newcommand{\T}{\mathbb T}
\newcommand{\F}{\mathbb F}
\begin{document}

\begin{abstract}
Let $G$ be a locally compact group and $1\leq p<\infty$. A continuous unitary representation $\pi\fn G\to B(\Hi)$ of $G$ is an $L^p$-representation if the matrix coefficient functions $s\mapsto \lla \pi(s)x,x\rra$ lie in $L^p(G)$ for sufficiently many $x\in \Hi$. Brannan and Ruan \cite{br} defined the $L^p$-Fourier algebra $A_{L^p}(G)$ to be the set of matrix coefficient functions of $L^p$-representations. Similarly, the $L^p$-Fourier-Stieltjes algebra $B_{L^p}(G)$ is defined to be the weak*-closure of $A_{L^p}(G)$ in the Fourier-Stieltjes algebra $B(G)$. These are always ideals in the Fourier-Stieltjes algebra containing the Fourier algebra. In this paper we investigate how these spaces reflect properties of the underlying group and study the structural properties of these algebras. As an application of this theory, we characterize the Fourier-Stieltjes ideals of $SL(2,\R)$.
\end{abstract}

\maketitle

%\tableofcontents

\section{Introduction}

The theory of Banach algebras is motivated by examples, and many of the important examples in the field of Banach algebras arise from locally compact groups. The most classicly studied Banach algebra associated to a locally compact group $G$ is the group algebra $L^1(G)$ with multiplication given by convolution. In 1952 James Wendel showed that $L^1(G)$ is a complete invariant for locally compact groups $G$ in the sense that $L^1(G_1)$ is isometrically isomorphic to $L^1(G_2)$ as Banach algebras if and only if $G_1$ is homeomorphically isomorphic to $G_2$ \cite{wend}. Hence, we can expect that many properties of the group may be reflected in the group algebra. This is indeed the case. For example, it is easily checked that $G$ is abelian if and only if $L^1(G)$ is commutative and $G$ is a discrete group if and only if $L^1(G)$ is unital. A much less obvious property shown by Barry Johnson is that $G$ is amenable if and only if $L^1(G)$ is amenable as a Banach algebra \cite{j}.

Since the group algebra $L^1(G)$ of a locally compact group $G$ is an involutive Banach algebra, it is natural to consider operator algebras containing a copy of $L^1(G)$ as a dense subspace. The most heavily studied of these are the full and reduced group $C^*$-algebras $C^*(G)$ and $C^*_r(G)$, and the group von Neumann algebra $VN(G)$ which contain norm and weak*-dense copies of $L^1(G)$, respectively. Unlike the group algebra $L^1(G)$, these operator algebras fail to completely determine the group $G$ but are still able to encode many useful properties of the underlying group $G$. %For example, the group $G$ is amenable if and only if $C^*(G)=C^*_r(G)$ REF.

Related to the group von Neumann algebra and the full group $C^*$-algebra, we have the Fourier algebra $A(G)$ and the Fourier-Stieltjes algebra $B(G)$ which naturally identify with the predual of $VN(G)$ and the dual of $C^*(G)$, respectively. The Fourier and Fourier-Stieltjes algebras are can be viewed as subalgebras of $C_0(G)$ and $C_b(G)$, respectively, endowed with a norm dominating the uniform norm. Despite always being commutative Banach algebras even when $G$ is nonabelian, Martin Walter demonstrated that these Banach algebras $A(G)$ and $B(G)$ are complete invariants for $G$ \cite{walt}. In many ways $A(G)$ is analagous to the group algebra $L^1(G)$, however it is not the case that $A(G)$ is amenable if and only if $G$ is amenable. In fact Brian Forrest and Volker Runde showed that $A(G)$ is amenable if and only if $G$ is almost abelian, i.e., if and only if $G$ contains an open abelian subgroup of finite index \cite{fr}. Recall that $A(G)$ is the predual of $VN(G)$ and, hence, has a canonical operator space structure. By taking this observation into account, Zhong-Jin Ruan demonstrated that $G$ is amenable if and only if $A(G)$ is operator amenable \cite{ru}. The amenability of $G$ has also been characterized by the existance of a bounded approximate identity in $A(G)$ by Leptin \cite{le} and in terms of the multipliers of $A(G)$ by Losert \cite{lo}.

%The Fourier algebra encodes the amenability in a number of different ways. For example, $A(G)$ is amenable if and only if $A(G)$ has a bounded approximate identity (Leptin's characterization) if and only if the multipliers of $A(G)$ are exactly $B(G)$ (Losert's characterization) REF. It is not the case, however, that $A(G)$ is amenable as a Banach algebra if and only if $G$ is amenable. In fact, Brian Forrest and Volker Runde showed that the Fourier algebra $A(G)$ is an amenable Banach algebra if and only if $G$ is almost abelian, i.e., $G$ contains an open abelian subgroup of finite index REF. Notice that since the Fourier algebra $A(G)$ is the predual of the von Neumann algebra $VN(G)$, it has a natural operator space structure. Zhong-Jin Ruan was able to use this observation to show that a locally compact group $G$ is amenable if and only if $A(G)$ is operator amenable. The $L^p$-Fourier and Fourier-Stieltjes spaces that we will study in this paper are ideals of the Fourier Stieltjes algebra containing the Fourier algebra.

Recently Nate Brown and Erik Guentner defined the concept of $L^p$-representations and their associated $C^*$-algebras \cite{bg}. Let $G$ be a locally compact group and $1\leq p<\infty$. A (continuous unitary) representation $\pi\fn G\to B(\Hi)$ is said to be an $L^p$-representation if, roughly speaking, the matrix coefficient functions $s\mapsto \lla \pi(s)x,x\rra$ are in $L^p(G)$ for sufficiently many $x\in \Hi$. As examples, the left regular representation $\lambda$ is an $L^p$-representation of $G$ for each $1\leq p<\infty$ and the trivial representation is an $L^p$-representation if and only if $G$ is compact. When $G$ is the group $SL(2,\R)$, it is an immediate consequence of the work of Ray Kunze and Elias Stein \cite{ks} that each nontrivial irreducible represenation of $G$ is an $L^p$-represenation of $G$ for some $p\in [2,\infty)$. The $C^*$-algebra $C^*_{L^p}(G)$ is defined to be the completion of $L^1(G)$ with respect to a $C^*$-norm arising from $L^p$-represenations of the group $G$. When $p\in [1,2]$, $C^*_{L^p}(G)$ is simply the reduced group $C^*$-algebra $C^*_r(G)$, but this need not be the case for $p> 2$. Indeed, Rui Okayasu showed that the $C^*$-algebras $C^*_{\ell^p}(\F_d)$ are distinct for every $2\leq p<\infty$ \cite{o} where $\F_d$ denotes the free group on $2\leq d<\infty$ generators. In section \ref{SL(2,R)} we observe that the analogous result holds for $SL(2,\R)$.

In \cite{br} Michael Brannan and Zhong-Jin Ruan defined and developed some basic theory of $L^p$-Fourier and Fourier-Stieltjes algebras, denoted $A_{L^p}(G)$ and $B_{L^p}(G)$. Evidencing their usefulness, the $L^p$-Fourier-Stieltjes algebras were used in \cite{w2} to find many intermediate $C^*$-norms on tensor products of group $C^*$-algebras.
%The $L^p$-Fourier-Stieltjes algebras identify naturally with the dual spaces of the $C^*$-algebras $C^*_{L^p}(G)$ and the $L^p$-Fourier algebras are naturally preduals of certain von Neumann algebras.
The $L^p$-Fourier and Fourier-Stieltjes algebras are ideals of the Fourier-Stieltjes algebra corresponding to coefficient functions of $L^p$-representations. Similar to the case of the $C^*$-algebras, the $L^p$-Fourier algebra coincides with the Fourier algebra $A(G)$ and the $L^p$-Fourier-Stieltjes algebra with the reduced Fourier-Stieltjes algebra when $p\in [1,2]$. This is not the case necessarily for $p>2$. In fact we demonstrate rich classes of groups $G$ so that $A_{L^p}(G)$ is distinct for every $p\in [2,\infty)$ and $B_{L^p}(G)$ is distinct for each $p\in [2,\infty)$.  As an application of the theory developed throughout this paper, we characterize the Fourier-Stieltjes ideals of $SL(2,\R)$ in terms of $L^p$-Fourier-Stieltjes algebras.

Similar to the Fourier algebra, we show the $L^p$-Fourier algebra is a complete invariant for locally compact groups. Unlike the Fourier algebra, the $L^p$-Fourier algebra can lack many nice properties even when $G$ is a very nice group. For example, when $G$ is a noncompact abelian group, then $A_{L^p}(G)$ is not even square dense for each $p\in (2,\infty)$ and, hence, lacks any reasonable notion of amenability. So the analogues of Ruan's and Leptin's characterizations of amenability fail for the $L^p$-Fourier algebras. Though the analogues of these characterizations of amenability fail for the $L^p$-Fourier algebra, we show that the analogue of Losert's characterization of amenability in terms of multipliers \cite{lo} holds for $A_{L^p}(G)$ and Runde-Spronk's characterization of amenability in terms of operator Connes amenability \cite{rs} holds for $B_{L^p}(G)$.

This paper is organized as follows. The following two sections are dedicated to reviewing the background required for this paper. A brief overview of Fourier and Fourier-Stieltjes spaces is provided in section 2, and the theory of $L^p$-represenations is recalled in section 3. In section 4 we recall the definition of the $L^p$-Fourier and Fourier-Stieltjes algebras and note some basic properties. Section 5 is devoted to studying the $L^p$-Fourier algebras for abelian groups. In section 6 we study the structural properties of the $L^p$-Fourier and Fourier-Stieltjes algebras. As a final application of this theory, we characterize the Fourier-Stieltjes ideals of $SL(2,\R)$ in section 7.

\section{Preliminaries on Fourier and Fourier-Stieltjes spaces}\label{spaces}

In this section we provide an overview of the theory of Fourier and Fourier-Stieltjes spaces as developed by Pierre Eymard \cite{e} and Gilles Arsac \cite{a}. The reader is encouraged to see these two papers for additional details.

Let $G$ be a locally compact group. The {\it Fourier-Stieltjes algebra} is defined to be the set of coefficient functions $s\mapsto\pi_{x,y}:=\lla \pi(s)x,y\rra$ as $\pi\fn G\to B(\Hi_\pi)$ ranges over the (continuous unitary) representations of $G$ and $x,y$ over $\Hi_\pi$. Then $B(G)$ identifies naturally with the dual of the full group $C^*$-algebra $C^*(G)$ via the identification $<u,f>\,=\int_G u(s)f(s)\,ds$ for $f\in L^1(G)$ and $u\in B(G)$. When endowed with the norm attained from this identification with $C^*(G)^*$, the Fourier-Stieltjes algebra becomes a Banach algebra under pointwise operations. When $G$ is abelian, $B(G)$ is isometrically isomorphic as a Banach algebra to the measure algebra $M(\widehat{G})$ and the isomorphism is given by the Fourier-Stieltjes transform.

Let $\mc S$ be a collection of representations of $G$. The {\it Fourier space} $A_{\mc S}$ is defined to be the closed linear span of coefficient functions $\pi_{x,y}$ in $B(G)$ as $\pi$ ranges over representations in $\mc S$ and $x,y$ over $\Hi_\pi$. If $\mc S$ consists of a single representations $\pi$, then $A_\mc S$ is denoted by $A_\pi$. These Fourier spaces $A_{\mc S}$ are translation invariant (under both left and right translation) subspaces of $B(G)$ and, conversely, every closed translation invariant subspace of $B(G)$ is realizable as a Fourier space $A_\pi$ for some representation $\pi$ of $G$. A fortiori, for every collection $\mc S$ of representations of $G$, $A_{\mc S}$ is equal to $A_\pi$ for some representation $\pi$ of $G$.

As a distinguished Fourier space, the Fourier algebra $A(G)$ is defined to be $A_\lambda$ where $\lambda$ denotes the left regular representation of $G$. Although not obvious, it is a consequence of Fell's absorption principle that $A(G)$ is a subalgebra (and in fact an ideal) of $B(G)$. When $G$ is abelian $A(G)$ is isometrically isomorphic to the group algebra $L^1(\widehat{G})$ via the Fourier transform.

Fix some representation $\pi$ of $G$. The Fourier space $A_\pi$ is exactly the set of infinite sums $\sum_{n=1}^\infty \pi_{x_n,y_n}$ with $\{x_n\},\{y_n\}\subset \Hi_\pi$ satisfying the condition that $\sum_{n=1}^\infty \|x_n\|\|y\|_n<\infty$. Moreover, the norm of an element $u\in A_\pi$ is given by
$$ \|u\|=\inf\bigg\{\sum_{n=1}^\infty\|x_n\|\|y_n\| : u=\sum_{n=1}^\infty \pi_{x_n,y_n}\bigg\} $$
and this infimum is attained.

For each represntation $\pi$ of $G$, define $VN_\pi$ to be the von Neuman algebra $\pi(L^1(G))''=\pi(G)''\subset B(\Hi_\pi)$. The Fourier space $A_\pi$ naturally identifies with the predual of $VN_\pi$ via the pairing $<u,T>\,=\sum_{n=1}^\infty\lla Tx_n,y_n\rra$, for $u\in B(G)$ and $T\in VN_\pi$, where $u=\sum_{n=1}^\infty\pi_{x_n,y_n}$ and $\sum_{n=1}^\infty\|x_n\|\|y_n\|<\infty$. In particular, this implies that the Fourier algebra $A(G)$ is the predual of the group von Neumann algebra $VN(G)=VN_\lambda$. For representations $\pi$ and $\sigma$ of $G$, the Fourier space $A_\pi$ is contained in $A_\sigma$ if and only if $\pi$ is quasi-contained in $\sigma$, i.e., if and only if $\pi$ is contained in some amplification $\sigma^{\oplus\alpha}$ of $\sigma$ for some cardinal $\alpha$, which occurs if and only if the map $\sigma(f)\mapsto \pi(f)$ for $f\in L^1(G)$ extends to a normal $*$-isomorphism from $VN_\sigma\to VN_\pi$. Hence, there is a one-to-one correspondence between group von Neumann algebras $VN_\pi$ of $G$ and Fourier spaces $A_\pi$.

Let $\mc S$ be a collection of representations of $G$. The {\it Fourier-Stieltjes} space $B_{\mc S}$ is defined to be the closure of $A_{\mc S}$ in the weak*-topology $\sigma(B(G),C^*(G))$. Since every Fourier space is realizable as a space $A_\pi$ for some representation $\pi$ of $G$, every Fourier-Stieltjes space is also realizable as $B_\sigma$ for some representation $\sigma$.

Let $\pi$ be a representation of $G$. Then the Fourier-Stieltjes space $B_\pi$ can be identified with the $C^*$-algebra $C^*_\pi:= \overline{\pi(L^1(G))}^{\|\cdot\|}\subset B(\Hi_\pi)$ via the pairing $<u,\pi(f)>\,=\int_G u(s)f(s)\,ds$. Let $\mc S$ and $\mc S'$ be two collections of representations of $G$. Then $B_{\mc S}\subset B_{\mc S'}$ if and only if $\sup_{\pi\in \mc S}\|\pi(f)\|\leq \sup_{\sigma\in \mc S'} \|\sigma(f)\|$ for every $f\in L^1(G)$, i.e., if and only if $\mc S$ is weakly contained in $\mc S'$. Hence, there is a one-to-one correspondence between group $C^*$-algebras $C^*_\pi$ of $G$ and Fourier-Stieltjes spaces $B_\pi$.

In this paper we study the Fourier and Fourier-Stieltjes spaces associated to the $L^p$-representations of a locally compact group $G$. These representations are defined in the next section.

\section{Preliminaries on $L^p$-representations and corresponding $C^*$-algebras}

The theory of $L^p$-representations and their corresponding $C^*$-algebras for discrete groups was recently developed by Nate Brown and Erik Guentner in \cite{bg}. This paper has inspired further work by a number of other authors (see \cite{br,j,klq,w,w2}). Though Brown and Guentner defined $L^p$-representations in the context of discrete groups, their definitions and basic results generalize immediately to our context of locally compact groups. Rather than making explicit notes of this, we will simply state their results in the context of locally compact groups.

Let $G$ be a locally compact group and $D$ a linear subspace of $C_b(G)$. A representation $\pi\fn G\to B(\Hi_\pi)$ is said to be a {\it $D$-representation} if there exists a dense subspace $\Hi_0$ of $\Hi_\pi$ so that $\pi_{x,x}\in D$ for every $x\in \Hi_0$. The following facts are noted in \cite{br} and \cite{bg}, and are easily checked:
\begin{itemize}
\item The $D$-representations are closed under arbitrary direct sums.
\item If $D$ is a subalgebra of $C_b(G)$, then the tensor product of two $D$-representations remains a $D$-representation.
\item If $D$ is an ideal of $C_b(G)$, then the tensor product of a $D$-representation with any representation is a $D$-representation.
\end{itemize}
For our purposes, we will be most interested in studying the case when $D=L^p(G)\cap C_b(G)$ for $p\in [1,\infty)$. In this case, the left regular representation $\lambda$ of $G$ is an $L^p$-representation since taking the dense subspace of $L^2(G)$ to be $C_c(G)$ clearly satisfies the required condition.

To each linear subspace $D$ of $C_b(G)$ define a $C^*$-seminorm $\|\cdot\|_D\fn L^1(G)\to [0,\infty)$ by
$$\|f\|_D=\sup\{\|\pi(f)\|: \pi\tn{ is a $D$-representation}\}.$$
The $C^*$-algebra $C^*_D(G)$ is defined to be the ``completion'' of $L^1(G)$ with respect to this $C^*$-seminorm. When $D=L^p(G)\cap B(G)$, we write $C^*_{L^p}(G)=C^*_D(G)$ and $\|\cdot\|_{L^p}=\|\cdot\|_D$. This process of building $C^*$-algebras was originally completed in the case when $D$ was an ideal of $\ell^\infty(\Gamma)$ of a discrete group $\Gamma$, and was called an {\it ideal completion} \cite{bg}. We note that in the case when $D=L^p$ for some $p\in[1,\infty)$, then $\|\cdot\|_{L^p}$ dominates the reduced $C^*$-norm since $\lambda$ is an $L^p$-representation. A fotiori $\|\cdot\|_{L^p}$ is a norm on $L^1(G)$ and the identity map on $L^1(G)$ extends to a quotient map from $C^*_{L^p}(G)$ onto $C^*_r(G)$.

In general, it is desirable that the space $D\subset C_b(G)$ used in this construction is translation invariant (under both left and right translation). Indeed, this guarantees that if $u$ is a positive definite function on $G$ which lies in $D$, then the GNS representation of $u$ is a $D$-representation \cite[Lemma 3.1]{bg} and, hence, $u$ extends to a positive linear functional on $C^*_D(G)$.

%In the literature, the cases when the subspace $D\subset C_b(G)$ is taken to be $L^p$ for some $1\leq p<\infty$ or $C_0(G)$ have been more heavily studied than another.
The subspaces $D$ of $C_b(G)$ which have been most heavily studied in the context of $D$-representations are $C_0(G)$ and $L^p(G)$.
Brown and Guentner recognized both these cases in their paper and developed much of the basic theory for the associated $C^*$-algebras in their original paper. In the case when $D=L^p$, Brown and Guentner demonstrated that $C^*_{L^p}(G)=C^*_r(G)$ for every $p\in [1,2]$ \cite[Proposition 2.11]{bg}, and that if $C^*_{L^p}(G)=C^*(G)$ for some $p\in [1,\infty)$ then $G$ is amenable \cite[Proposition 2.12]{bg}.

Brown and Guentner demonstrated that this construction can produce an intermediate $C^*$-algebra between the reduced and full by showing that $C^*_{\ell^p}(\F_d)\neq C^*_r(\F_d)$ for some $p\in (2,\infty)$ where $\F_d$ is the free group on $2\leq d<\infty$ generators \cite[Proposition 4.14]{bg}. Subsequently, Okayasu showed that the $C^*$-algebras $C^*_{\ell^p}(\F_d)$ are distinct for every $p\in [2,\infty)$ (this was also independently shown by both Higson and Ozawa).

Let $G$ be a locally compact group and $H$ an open subgroup of $G$ and suppose that $\pi\fn H\to B(\Hi)$ is an $L^p$-representation of $H$. Then $\mathrm{ind}_H^G \pi$ is an $L^p$-representation of $G$. We proved this for subgroups of discrete groups in \cite[Theorem 2.4]{w}, but the proof holds for any open subgroup of a locally compact group. Since every $L^p$-representation of $G$ clearly restricts to an $L^p$-representation of $H$, it follows that $\|\cdot\|_{L^p(G)}|_{L^1(H)}=\|\cdot\|_{L^p(H)}$. Hence, in the case when $\Gamma$ is a discrete group containing a copy of a noncommutative free group, it follows that the $C^*$-algebras $C^*_{\ell^p}(\Gamma)$ are distinct for every $p\in [2,\infty)$.

%Throughout the rest of the paper we will concern ourselves with $A_{L^p}(G)$ and $B_{L^p}(G)$ which are the Fourier and Fourier-Stieltjes spaces associated with these $L^p$-representations.

\section{Definition and basic properties of $L^p$-Fourier(-Stieltjes) spaces}

%As the study and use of $L^p$-representations becomes more prevalent in the literature, it becomes natural to study their associated Fourier and Fourier-Stieltjes spaces. Indeed, Brannan and Ruan defined and have begun studying these spaces in \cite{br} and we have used these spaces to produce intermediate $C^*$-tensor product norms in \cite{w}, however there is still much work to be done.

%We previously mentioned that the two types of $D$-representations which have been most heavily studied in the literature are $L^p$-representations and $C_0$-representations. For $C_0$-representations, the associated Fourier space has already been well studied and preceeds the paper of Brown and Guentner. In the liturature these spaces are known as the Rajchman algebra AND HAVE BEEN STUDIED BY. We find this to be further motivation for the study of these spaces. In this section we simply define and prove basic properties $L^p$-Fourier and Fourier-Stieltjes spaces.

Recall that the $C_0$-representations and the $L^p$-representations are the two most heavily studied types of $D$-representations in the literature. In \cite{br} Brannan and Ruan define the $D$-Fourier algebra $A_D(G)$ and $D$-Fourier-Stieltjes algebra $B_D(G)$ when $D$ is a subalgebra of $C_b(G)$. When $D=C_0(G)$, the $D$-Fourier algebra $A_D(G)$ is already well studied and is known in the literature as the Rajchman algebra. In contrast, very little has been done in regards to the $L^p$-Fourier and $L^p$-Fourier-Stieltjes algebras. We aim to help fill this gap with our paper. In this section we recall the definitions and prove some basic properties of these spaces.

Let $D$ be a linear subspace of $C_b(G)$. The $D$-Fourier space is defined to be
$$A_D(G)=A_D:=\{\pi_{x,y} : \pi\tn{ a $D$-representation, }x,y\in \Hi_\pi\}.$$
Similarly, the $D$-Fourier-Stieltjes space $B_D(G)=B_D$ is defined to be the closure of $A_D$ with respect to the weak*-topology $\sigma(B(G),C^*(G))$. When the subspace $D$ of $C_b(G)$ is a subalgebra (resp., ideal) of $C_b(G)$, then Brannan and Ruan noted that $A_D(G)$ and $B_D(G)$ are subalgebras (resp., ideals) of $B(G)$ \cite[Proposition 3.11]{br}. In these cases, we may call $A_D(G)$ and $B_D(G)$ the $D$-Fourier algebra and $D$-Fourier-Stieltjes algebra, respsectively. Note that since $L^p$ is an ideal in $C_b(G)$, $A_{L^p}(G)$ and $B_{L^p}(G)$ are ideals in $B(G)$.

Let $D$ be a subspace of $C_b(G)$. In section 2 we defined the Fourier space $A_{\mc S}$ and the Fourier-Stieltjes space $B_{\mc S}$ when $\mc S$ is a collection of representations of $G$. As an immediate consequence of the next proposition, we get that $A_D=A_{\mc S}$ and $B_D=B_{\mc S}$ when $\mc S$ is taken to be the collection of $D$-representations of $G$.

\begin{prop}
Let $D$ be a subspace of $C_b(G)$. Then $A_D$ is a closed translation invariant subspace of $B(G)$. Moreover,
$$\|u\|_{B(G)}=\inf\{\|x\|\|y\|: u=\pi_{x,y}\tn{ and }\pi\tn{ is a }D\tn{-representation}\}$$
and this infimum is attained for some $D$-representation $\pi$ and $x,y\in \Hi_\pi$.
\end{prop}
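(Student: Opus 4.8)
The plan is to realize $A_D$ as a Fourier space $A_{\pi_u}$ for a suitable \emph{universal} $D$-representation $\pi_u$ and then to read off each assertion from the description of Fourier spaces recalled in section~\ref{spaces}. Since, up to quasi-equivalence, there is only a set of representations of $G$ (the Fourier spaces $A_\pi$ form a set and are in bijection with the quasi-equivalence classes of representations), I would pick one $D$-representation $\sigma_i$ from each quasi-equivalence class that contains a $D$-representation and set $\pi_u=\bigoplus_i\sigma_i$; since $D$-representations are closed under arbitrary direct sums (recalled in section~3), $\pi_u$ is again a $D$-representation, as is $\pi_u^{\oplus\infty}=\bigoplus_{n=1}^\infty\pi_u$, and every $D$-representation is quasi-equivalent to a subrepresentation of $\pi_u$.

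I would then show $A_D=A_{\pi_u}$. For the forward inclusion, if $u=\pi_{x,y}$ with $\pi$ a $D$-representation, I choose $i$ with $\sigma_i$ quasi-equivalent to $\pi$, so that $u\in A_\pi=A_{\sigma_i}\subseteq A_{\pi_u}$, the last inclusion holding because $\sigma_i$ is a subrepresentation, hence quasi-contained in $\pi_u$. For the reverse inclusion, I take $u\in A_{\pi_u}$ and write $u=\sum_{n=1}^\infty(\pi_u)_{x_n,y_n}$ with $\sum_{n=1}^\infty\|x_n\|\,\|y_n\|<\infty$, as in section~\ref{spaces}. After discarding the terms with $x_n=0$ or $y_n=0$ (which vanish) and replacing each remaining pair $(x_n,y_n)$ by $(t_nx_n,t_n^{-1}y_n)$ with $t_n=(\|y_n\|/\|x_n\|)^{1/2}$ --- which alters neither $(\pi_u)_{x_n,y_n}$ nor $\|x_n\|\,\|y_n\|$ --- I may assume $\|x_n\|=\|y_n\|$ for every $n$, so that $\sum_n\|x_n\|^2=\sum_n\|x_n\|\,\|y_n\|<\infty$. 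Then $X=(x_n)_n$ and $Y=(y_n)_n$ define vectors in $\Hi_{\pi_u^{\oplus\infty}}=\bigoplus_{n=1}^\infty\Hi_{\pi_u}$ with $u=(\pi_u^{\oplus\infty})_{X,Y}$, and since $\pi_u^{\oplus\infty}$ is a $D$-representation we get $u\in A_D$. Thus $A_D=A_{\pi_u}$, which by section~\ref{spaces} is a closed, translation-invariant subspace of $B(G)$.

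It remains to establish the norm formula. One inequality is immediate: if $u=\pi_{x,y}$ with $\pi$ a $D$-representation, then $u\in A_\pi$, and the infimum formula for the norm on $A_\pi$ applied to this one-term expansion gives $\|u\|_{B(G)}\leq\|x\|\,\|y\|$; taking the infimum over all such $\pi,x,y$ yields $\|u\|_{B(G)}\leq\inf\{\|x\|\,\|y\|: u=\pi_{x,y}\tn{ and }\pi\tn{ is a }D\tn{-representation}\}$. For the reverse inequality together with the attainment, I would use that the infimum defining the norm on $A_{\pi_u}$ is attained (section~\ref{spaces}): I choose $u=\sum_n(\pi_u)_{x_n,y_n}$ with $\sum_n\|x_n\|\,\|y_n\|=\|u\|_{B(G)}$, apply the same rescaling so that $\|x_n\|=\|y_n\|$ for all $n$, and form $X,Y\in\Hi_{\pi_u^{\oplus\infty}}$ as before. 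Then $\|X\|^2=\|Y\|^2=\sum_n\|x_n\|\,\|y_n\|=\|u\|_{B(G)}$ while $u=(\pi_u^{\oplus\infty})_{X,Y}$, so $\pi_u^{\oplus\infty}$ together with $X$ and $Y$ realizes $u$ as a coefficient of a $D$-representation with $\|X\|\,\|Y\|=\|u\|_{B(G)}$; this gives the reverse inequality and the attainment.

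The one step that genuinely requires an idea is the rescaling $x_n\mapsto t_nx_n$, $y_n\mapsto t_n^{-1}y_n$: it converts the Cauchy--Schwarz inequality $\|X\|\,\|Y\|\geq\sum_n\|x_n\|\,\|y_n\|$ into an equality and, at the same time, ensures that $X$ and $Y$ lie in the $\ell^2$-direct sum $\Hi_{\pi_u^{\oplus\infty}}$; everything else is bookkeeping built on the Fourier-space machinery recalled in section~\ref{spaces} together with the closure of $D$-representations under arbitrary direct sums. The only other point deserving care is the construction of the universal $D$-representation $\pi_u$ as a genuine $D$-representation, for which the set-theoretic observation in the first paragraph is what is needed.
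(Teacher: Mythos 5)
Your proof is correct and follows essentially the same route as the paper: build a universal $D$-representation as a direct sum (the paper indexes over $u\in A_D$ rather than over quasi-equivalence classes, but this is immaterial), identify $A_D$ with $A_{\pi_u}$, and use the attained infimum for the $A_\pi$-norm together with an infinite amplification. The only difference is that you make explicit the rescaling $x_n\mapsto t_nx_n$, $y_n\mapsto t_n^{-1}y_n$ needed for $(x_n)$ and $(y_n)$ to lie in the $\ell^2$-direct sum and for $\|X\|\,\|Y\|$ to equal $\sum_n\|x_n\|\,\|y_n\|$, a step the paper leaves implicit.
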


\begin{proof}
For every $u\in A_D$, choose a $D$-representation $\pi_u\fn G\to B(\Hi_u)$ so that $u=(\pi_u)_{x,y}$ for some $x,y\in \Hi_u$. Then $\pi:=\bigoplus_{u\in A_D} \pi_u$, being a direct sum of $D$-representations, is also a $D$-representation. Then $A_\pi\supset A_D$ since every element $u\in A_D$ is a coefficient function of $\pi$.

Now let $u\in A_\pi$. Then we can find sequences $\{x_n\},\{y_n\}$ in $\Hi_\pi$ so that $u=\sum_{n=1}^\infty \pi_{x_n,y_n}$ and $\|u\|=\sum_{n=1}^\infty \|x_n\|\|y_n\|$. Let $\tilde{\pi}\fn G\to B(\Hi_\pi^{\bigoplus\infty})$ be the infinite amplification $\infty\cdot \pi$. Then, since $\tilde{\pi}$ is a $D$-representation and $u=\tilde\pi_{(x_n),(y_n)}$, we arrive at the desired conclusions.
\end{proof}

These observations allow us to identify $B_D$ with the dual of $C^*_D(G)$.

\begin{prop}
Let $D$ be a subspace of $C_b(G)$. Then $B_D$ is identified with the dual space of $C^*_D(G)$ via the dual pairing $<u,f>\,=\int u(s)f(s)\,ds$ for $f\in L^1(G)$. 
\end{prop}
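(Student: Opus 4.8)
The plan is to reduce this to the single-representation case recorded in Section~\ref{spaces}. First, invoking the preceding proposition, I would fix a $D$-representation $\pi$ with $A_\pi=A_D$: concretely $\pi=\bigoplus_{u\in A_D}\pi_u$, a direct sum of $D$-representations (hence itself a $D$-representation) which realizes every $u\in A_D$ as a coefficient function, and conversely whose every coefficient function lies in $A_D$ since $A_D$ is a subspace containing the coefficient functions of each $D$-representation. Taking weak*-closures in $B(G)$ then gives $B_D=B_\pi$, and Section~\ref{spaces} identifies $B_\pi$ with the dual of $C^*_\pi:=\overline{\pi(L^1(G))}^{\|\cdot\|}$ under the pairing $\langle u,\pi(f)\rangle=\int_G u(s)f(s)\,ds$. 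So the statement follows once I verify that $C^*_\pi=C^*_D(G)$, that is, $\|\pi(f)\|=\|f\|_D$ for every $f\in L^1(G)$.

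The inequality $\|\pi(f)\|\leq\|f\|_D$ is immediate, since $\pi$ is a $D$-representation. For the reverse, I would take an arbitrary $D$-representation $\sigma\fn G\to B(\Hi_\sigma)$ with dense subspace $\Hi_0\subseteq\Hi_\sigma$ satisfying $\sigma_{x,x}\in D$ for $x\in\Hi_0$, and show $\sigma$ is quasi-contained in $\pi$. Each $\sigma_{x,x}$ with $x\in\Hi_0$ is a coefficient function of the $D$-representation $\sigma$, hence lies in $A_D=A_\pi$; by polarization so does every $\sigma_{x,y}$ with $x,y\in\Hi_0$; and since $\|\sigma_{x,y}-\sigma_{x',y'}\|_{B(G)}\leq\|x-x'\|\|y\|+\|x'\|\|y-y'\|$, the density of $\Hi_0$ together with the fact that $A_\pi$ is norm-closed forces $A_\sigma\subseteq A_\pi$. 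By the Fourier-space/quasi-containment dictionary recalled in Section~\ref{spaces}, $A_\sigma\subseteq A_\pi$ precisely when $\sigma$ is quasi-contained in, hence weakly contained in, $\pi$; therefore $\|\sigma(f)\|\leq\|\pi(f)\|$ for all $f\in L^1(G)$. Taking the supremum over all $D$-representations $\sigma$ yields $\|f\|_D\leq\|\pi(f)\|$, which completes the identification $C^*_\pi=C^*_D(G)$ and hence $B_D=C^*_D(G)^*$.

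The step I expect to be the main obstacle — really the only nonformal point — is this last inequality $\|f\|_D\leq\|\pi(f)\|$: because $\|\cdot\|_D$ is a supremum over the entire class of $D$-representations, one must see that the single representation $\pi$ assembled from the coefficient functions in $A_D$ already dominates all of them, and this is exactly where one leans on the translation invariance and norm-closedness of $A_D$ from the preceding proposition and on the equivalence between inclusion of Fourier spaces and quasi-containment of representations from Section~\ref{spaces}. The remaining ingredients (direct sums of $D$-representations are $D$-representations, polarization, and the elementary norm estimate for coefficient functions) are routine.
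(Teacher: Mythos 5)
Your proposal is correct and follows essentially the same route as the paper: take the $D$-representation $\pi$ from the preceding proposition with $A_\pi=A_D$, reduce to showing $\|\pi(f)\|=\|f\|_D$, and obtain the nontrivial inequality from $A_\sigma\subseteq A_\pi$ for every $D$-representation $\sigma$. The only cosmetic difference is that you pass through quasi-containment while the paper passes through $B_\sigma\subseteq B_\pi$ and the weak-containment dictionary; both are facts recalled in Section~2 and give the same conclusion.
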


\begin{proof}
Let $\pi$ be the representation from the proof of the previous proposition and note that we demonstrated that $A_D=A_\pi$. Hence, it suffices to check that $C^*_D(G)=C^*_\pi$.

Since $\pi$ is a $D$-representation, $\|\pi(f)\|\leq \|f\|_D$ for every $f\in L^1(G)$. Now let $\sigma$ be a $D$-representation of $G$. Then $A_\sigma\subset A_D=A_\pi$ implies that $B_\sigma\subset B_\pi$ and, hence, that $\|\sigma(f)\|\leq \|\pi(f)\|$ for every $f\in L^1(G)$. Thus, $\|f\|_D=\|\pi(f)\|$ for every $f\in L^1(G)$.
\end{proof}

%So we get the following immediate corollary.
%
%\begin{corollary}
%$B_{D_1}=B_{D_2}$ if and only if $C^*_{D_1}(G)=C^*_{D_2}(G)$.
%\end{corollary}

Let $P(G)$ denote the set of positive definite functions on $G$. Then $A_D$ has a very nice description in terms of the linear span of positive definite functions when $D$ is a translation invariant subspace of $C_b(G)$.

\begin{prop}\label{linear combination}
Suppose that $D$ is a translation invariant subspace of $C_b(G)$. Then $A_D$ is the closed linear span of $P(G)\cap D$ in $B(G)$.
\end{prop}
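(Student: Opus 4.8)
The plan is to prove a two-sided inclusion. One direction is essentially formal: if $u \in P(G) \cap D$, then since $D$ is translation invariant, the GNS construction applied to $u$ yields a representation $\pi_u$ with a cyclic vector $\xi$ such that $u = (\pi_u)_{\xi,\xi}$, and the cyclicity together with translation invariance of $D$ forces $\pi_u$ to be a $D$-representation (this is exactly the mechanism recalled in Section 3, citing \cite[Lemma 3.1]{bg}: the subspace $\pi_u(C_c(G))\xi$, or rather the span of translates, is dense and all its diagonal coefficients land in $D$). Hence $P(G) \cap D \subseteq A_D$, and since $A_D$ is a closed subspace of $B(G)$ by the first proposition above, the closed linear span of $P(G)\cap D$ is contained in $A_D$.

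For the reverse inclusion, I would first reduce to diagonal coefficients. Given a general coefficient $u = \pi_{x,y}$ of a $D$-representation $\pi$, the standard polarization identity
$$ \pi_{x,y} = \frac{1}{4}\sum_{k=0}^{3} i^k\, \pi_{x + i^k y,\, x + i^k y} $$
writes $u$ as a linear combination of diagonal coefficients $\pi_{z,z}$ with $z \in \Hi_\pi$. Each $\pi_{z,z}$ is a positive definite function lying in $B(G)$, so it suffices to show every such $\pi_{z,z}$ lies in the closed linear span of $P(G) \cap D$. The issue is that $z$ need not lie in the distinguished dense subspace $\Hi_0$ on which the coefficients are known to be in $D$; but $\pi_{z,z}$ is continuous in $z$ with respect to the $\Hi_\pi$-norm (indeed $\|\pi_{z,z} - \pi_{z',z'}\|_{B(G)} \leq \|z-z'\|(\|z\|+\|z'\|)$), so choosing $z_n \in \Hi_0$ with $z_n \to z$ gives $\pi_{z_n,z_n} \to \pi_{z,z}$ in $B(G)$-norm, and each $\pi_{z_n,z_n} \in P(G) \cap D$. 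This shows $A_D \subseteq \overline{\mathrm{span}}\,(P(G)\cap D)$.

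The main obstacle I anticipate is the first direction — specifically, verifying carefully that the GNS representation of $u \in P(G)\cap D$ really is a $D$-representation, i.e., exhibiting a dense subspace of the GNS space all of whose diagonal coefficients lie in $D$. The natural candidate dense subspace is the linear span of the translates of the cyclic vector: if $\xi$ is cyclic with $u(s) = \langle \pi_u(s)\xi,\xi\rangle$, then for the vector $\eta = \sum_j c_j \pi_u(t_j)\xi$ one computes $\langle \pi_u(s)\eta,\eta\rangle = \sum_{j,k} c_j \overline{c_k}\, u(t_k^{-1} s t_j)$, which is a finite linear combination of two-sided translates of $u$ and hence lies in $D$ by translation invariance. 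Since such $\eta$ form a dense subspace of $\Hi_{\pi_u}$, the representation $\pi_u$ is a $D$-representation, as required. Once this point is handled, everything else is routine polarization, continuity, and the already-established fact that $A_D$ is closed.
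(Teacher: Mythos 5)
Your proof is correct and follows essentially the same route as the paper: the GNS construction together with the (two-sided) translation invariance of $D$ gives $P(G)\cap D\subseteq A_D$, and polarization combined with approximation from the dense subspace $\Hi_0$ gives the reverse inclusion. The only cosmetic differences are that you polarize first and then approximate, whereas the paper approximates $x,y$ by vectors in $\Hi_0$ first and then polarizes, and that your polarization identity correctly carries the factor $\frac{1}{4}$, which the paper's displayed formula omits.
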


\begin{proof}
Let $u\in P(G)\cap D$. Then, since the GNS representation of $u$ is a $D$-representation, $u$ is clearly in $A_D$. As $A_D$ is a closed subspace of $B(G)$, we conclude that $A_D$ contains the closed linear span of $P(G)\cap D$.

Now let $u\in A_D$. Then we can write $u=\pi_{x,y}$ for some $D$-representation $\pi$ of $G$ and $x,y\in\Hi_\pi$. Let $\Hi_0$ be a dense subspace of $\Hi_\pi$ so that $\pi_{z,z}\in D$ for every $z\in \Hi_0$ and choose sequences $\{x_n\}$, $\{y_n\}$ in $\Hi_0$ converging in norm to $x$ and $y$, respectively. Then
$$\pi_{x_n,y_n}=\sum_{k=0}^3 i^k\pi_{x_n+i^k y_n, x_n+i^k y_n}$$
converges to $u=\pi_{x,y}$ in norm. Hence, $A_D$ is the closed linear span of $P(G)\cap D$.
\end{proof}

For the remainder of this section, we will focus specifically on $L^p$-Fourier and Fourier-Stieltjes algebras. We begin by identifying cases when these spaces are familiar subspaces of $B(G)$.

\begin{prop}\label{basic}
Let $G$ be a locally compact group.
\begin{itemize}
\item[(i)] $A_{L^p}(G)=A(G)$ for every $p\in [1,2]$.
\item[(ii)] If $G$ is compact, then $A_{L^p}(G)=B(G)$ for every $p\in [1,\infty)$.
\item[(iii)] If $G$ is amenable, then $B_{L^p}(G)=B(G)$ for every $p\in [1,\infty)$.
\item[(iv)] If $B_{L^p}(G)=B(G)$ for some $p\in [1,\infty)$, then $G$ is amenable.
\end{itemize}
\end{prop}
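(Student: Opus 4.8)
The plan is to handle the four parts essentially independently, each time reducing to facts already recorded in the paper together with two classical inputs: Godement's theorem, that a positive definite function lying in $L^2(G)$ is a coefficient function of the left regular representation $\lambda$, and the Brown--Guentner criterion \cite[Proposition 2.12]{bg}, that $C^*_{L^p}(G)=C^*(G)$ forces $G$ to be amenable.

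For (i), one inclusion is immediate: since $\lambda$ is an $L^p$-representation for every $p$, we get $A(G)=A_\lambda\subseteq A_{L^p}(G)$. For the reverse inclusion when $p\in[1,2]$ I would invoke Proposition~\ref{linear combination}: because $L^p(G)$ is translation invariant, $A_{L^p}(G)$ is the closed linear span of $P(G)\cap L^p(G)$ in $B(G)$. Every $u\in P(G)$ is bounded with $\|u\|_\infty=u(e)$, so for $p\le 2$ the estimate $\int_G|u|^2\,ds\le\|u\|_\infty^{2-p}\int_G|u|^p\,ds$ shows $P(G)\cap L^p(G)\subseteq P(G)\cap L^2(G)$, and Godement's theorem places this last set inside $A(G)$. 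As $A(G)$ is norm closed in $B(G)$, it contains the closed linear span of $P(G)\cap L^p(G)$, giving $A_{L^p}(G)\subseteq A(G)$.

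Parts (ii) and (iii) are bookkeeping. If $G$ is compact, then with normalized Haar measure $C(G)\subseteq L^p(G)$ for every $p\in[1,\infty)$, so the (necessarily continuous) coefficient functions of any representation of $G$ all lie in $L^p(G)\cap C_b(G)$; hence every representation of $G$ is an $L^p$-representation, the collection $\mc S$ of $L^p$-representations is all of them, and $A_{L^p}(G)=A_{\mc S}=B(G)$. If $G$ is amenable, then $C^*(G)=C^*_r(G)$, equivalently $B_\lambda=B(G)$; since $A_\lambda\subseteq A_{L^p}(G)$, taking weak*-closures yields $B(G)=B_\lambda\subseteq B_{L^p}(G)\subseteq B(G)$, so $B_{L^p}(G)=B(G)$.

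For (iv) I would pass through the duality recalled in Section~\ref{spaces}: $B_{L^p}(G)=B_\sigma$ is the dual of $C^*_\sigma=C^*_{L^p}(G)$ for the universal $L^p$-representation $\sigma$, while $B(G)$ is the dual of $C^*(G)$; by the one-to-one correspondence between the group $C^*$-algebras $C^*_\pi$ and the Fourier--Stieltjes spaces $B_\pi$, the hypothesis $B_{L^p}(G)=B(G)$ forces $C^*_{L^p}(G)=C^*(G)$, and then $G$ is amenable by \cite[Proposition 2.12]{bg}. The only step that is not purely formal is the reverse inclusion in (i), which needs both the automatic square integrability of positive definite $L^p$-functions for $p\le 2$ and Godement's theorem; the point to watch in (iv) is to genuinely pass through the $C^*$-algebra duality rather than attempt a soft weak*-density argument.
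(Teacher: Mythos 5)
Your proposal is correct and follows essentially the same route as the paper: parts (ii)--(iv) are argued identically, and for (i) the paper's sandwich $P(G)\cap C_c(G)\subset P(G)\cap L^p(G)\subset P(G)\cap L^2(G)$ together with Eymard's characterization of $A(G)$ as the closed span of either end is just a repackaging of your two ingredients ($\lambda$ being an $L^p$-representation, and the boundedness estimate plus Godement's theorem).
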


\begin{proof}
(i) Recall that the Fourier algebra $A(G)$ is both the closed linear span of $P(G)\cap C_c(G)$ and of $P(G)\cap L^2(G)$ \cite[Proposition 3.4]{e}. Since $P(G)\cap C_c(G)\subset P(G)\cap L^p(G)\subset P(G)\cap L^2(G)$ for every $p\in [1,2]$, we arrive at the desired conclusion.

(ii) Let $\pi$ be a representation of $G$ and $x\in\Hi_\pi$. Then $\pi_{x,x}$ is bounded in uniform norm by $\|x\|^2$. Since $x\in\Hi_\pi$ was arbitrary, we conclude that every representation $\pi$ of $G$ is an $L^p$-representation and, hence, that $A_{L^p}(G)=B(G)$.

(iii) Since $G$ is amenable, $C^*_r(G)=C^*(G)$. Hence, the reduced Fourier-Stieltjes algebra $B_\lambda=B(G)$. Since $A_{L^p}(G)\supset A(G)$, we conclude that $B_{L^p}(G)$ must also be all of $B(G)$.

(iv) If $B_{L^p}(G)=B(G)$, then $C^*_{L^p}(G)=C^*(G)$ and, hence, $G$ is amenable \cite{bg}.
\end{proof}

\begin{prop}\label{Holder}
Let $G$ be a locally compact group and $p,q,r\in [1,\infty)$ be such that $\frac{1}{p}+\frac{1}{q}=\frac{1}{r}$. Then $uv\in A_{L^r}(G)$ for every $u\in A_{L^p}(G)$ and $v\in A_{L^q}(G)$. Similarly, $uv\in B_{L^r}(G)$ for all $u\in B_{L^p}(G)$ and $v\in B_{L^q}(G)$.
\end{prop}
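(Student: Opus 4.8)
The plan is to prove the statement for $A_{L^p}$ first and then bootstrap to $B_{L^p}$ by a weak*-density argument. For the $A_{L^p}$ case, take $u \in A_{L^p}(G)$ and $v \in A_{L^q}(G)$. By Proposition \ref{linear combination}, since $L^p$ and $L^q$ are translation invariant subspaces of $C_b(G)$, we may write $u$ as a norm limit of linear combinations of elements of $P(G) \cap L^p(G)$ and $v$ as a norm limit of linear combinations of elements of $P(G) \cap L^q(G)$. Since multiplication in $B(G)$ is jointly continuous and $A_{L^r}(G)$ is norm-closed, it suffices to show that $\varphi\psi \in A_{L^r}(G)$ whenever $\varphi \in P(G) \cap L^p(G)$ and $\psi \in P(G) \cap L^q(G)$. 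Now $\varphi\psi$ is again positive definite (a product of positive definite functions is positive definite, since it is a coefficient function of the tensor product representation), and by Hölder's inequality with exponents $p/r$ and $q/r$ (whose reciprocals sum to $1$ by hypothesis on $1/p + 1/q = 1/r$), $\|\varphi\psi\|_r \leq \|\varphi\|_p \|\psi\|_q < \infty$, so $\varphi\psi \in P(G) \cap L^r(G)$. Hence $\varphi\psi \in A_{L^r}(G)$ by Proposition \ref{linear combination} again. This settles the first assertion.

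For the $B_{L^p}$ case, recall that $B_{L^p}(G)$ is by definition the weak*-closure of $A_{L^p}(G)$ in $B(G)$, and similarly for $B_{L^q}(G)$ and $B_{L^r}(G)$. Fix $u \in B_{L^p}(G)$ and $v \in B_{L^q}(G)$; I want to conclude $uv \in B_{L^r}(G)$. The natural approach is to approximate: choose nets $(u_\alpha)$ in $A_{L^p}(G)$ with $u_\alpha \to u$ weak* and $(v_\beta)$ in $A_{L^q}(G)$ with $v_\beta \to v$ weak*. By the first part, $u_\alpha v_\beta \in A_{L^r}(G) \subseteq B_{L^r}(G)$ for all $\alpha, \beta$. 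It then remains to pass to the limit. One clean way is to use a known fact about $B(G)$: for a fixed $w \in B(G)$, multiplication by $w$ is weak*-continuous on bounded subsets of $B(G)$ (this follows because $B(G) = C^*(G)^*$ and multiplication by $w$ is the adjoint of a bounded map on $C^*(G)$, namely $f \mapsto wf$ on $L^1(G)$ extended to $C^*(G)$ — indeed $w$ acts as a completely bounded multiplier). So first fix $\alpha$: as $\beta$ varies, $u_\alpha v_\beta \to u_\alpha v$ weak* along a bounded net, hence $u_\alpha v \in B_{L^r}(G)$ since $B_{L^r}(G)$ is weak*-closed. Then, as $\alpha$ varies, $u_\alpha v \to uv$ weak* along a bounded net (again multiplication by the fixed element $v$ is weak*-continuous on bounded sets), so $uv \in B_{L^r}(G)$.

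The main obstacle I anticipate is the justification that multiplication in $B(G)$ is separately weak*-continuous on bounded sets, which is needed to pass from the dense algebra $A_{L^p}$ to its weak*-closure $B_{L^p}$. This is where one must be careful: multiplication on $B(G)$ is \emph{not} jointly weak*-continuous, but separate weak*-continuity on bounded sets is standard and follows from identifying $B(G)$ with $C^*(G)^*$ and observing that left multiplication by an element of $B(G)$ is the adjoint of a bounded linear operator on $C^*(G)$. If one prefers to avoid invoking completely bounded multiplier theory, an alternative is to restrict attention to the special structure of $L^p$-Fourier-Stieltjes algebras: $B_{L^p}(G) = B_{\pi_p}$ for the universal $L^p$-representation $\pi_p$, and $B_{\pi_p}$ identifies isometrically with $C^*_{L^p}(G)^*$ by the preceding proposition, which makes the weak*-continuity bookkeeping transparent. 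Either route handles the boundedness issue — the nets $(u_\alpha)$ and $(v_\beta)$ can be taken bounded in $B(G)$-norm since weak*-closures of bounded sets are bounded — and so the argument goes through.
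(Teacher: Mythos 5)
Your proof is correct and takes essentially the same route as the paper's: the first part is the paper's argument verbatim (reduce via Proposition \ref{linear combination} to products of positive definite functions and apply H\"older), and the second part is the paper's one-line appeal to separate weak*-continuity of multiplication on $B(G)$, which you merely spell out in more detail. (One minor remark: the restriction to bounded nets is not actually needed, since multiplication by a fixed $w\in B(G)$ is the adjoint of a bounded operator on $C^*(G)$ and hence weak*-continuous on all of $B(G)$.)
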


\begin{proof}
Let $u\in A_{L^p}(G)$ and $v\in A_{L^q}(G)$. By Proposition \ref{linear combination}, we can approximate $u$ and $v$ well in norm by linear combinations $a_1 u_1+\ldots+a_n u_n$ and $b_1 v_1+\ldots+b_m v_m$ of positive definite elements in $L^p(G)$ and $L^q(G)$, respectively. Then the product
$$ \sum_{i,j} a_ib_j u_i v_j$$
is a linear combination of elements in $P(G)\cap L^r(G)$ approximating $uv$ well in norm. Hence, $uv\in A_{L^r}(G)$ by Proposition \ref{linear combination}.

Note that since multiplication in $B(G)$ is separately weak*-weak* continuous, it follows that $uv\in B_{L^r}(G)$ for all $u\in B_{L^p}(G)$ and $v\in B_{L^q}(G)$.
\end{proof}

\begin{prop} \label{open}
Suppose $H$ is an open subgroup of a locally compact group $G$ and $1\leq p<\infty$. Then $A_{L^p}(H)=A_{L^p}(G)|_H$ and $B_{L^p}(H)=B_{L^p}(G)|_H$.
\end{prop}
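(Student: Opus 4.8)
The plan is to handle the two assertions separately, relying throughout on two standard features of an open subgroup $H$: it is also closed, and a left Haar measure on $G$ restricts to a left Haar measure on $H$. In particular, restriction $f\mapsto f|_H$ maps $L^p(G)$ contractively into $L^p(H)$, extension by zero embeds $L^p(H)$ isometrically into $L^p(G)$, and extending a positive definite function on $H$ by zero yields a positive definite function on $G$ (split any finite subset of $G$ into blocks lying in single cosets of $H$: the cross-block terms in the defining sum vanish and each block reduces to the inequality for $u$).

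For $A_{L^p}(G)|_H\subseteq A_{L^p}(H)$: if $u=\pi_{x,y}$ for an $L^p$-representation $\pi$ of $G$ with witnessing dense subspace $\Hi_0\subseteq\Hi_\pi$, then $\pi|_H$ is an $L^p$-representation of $H$ with the same witnessing subspace, since $(\pi|_H)_{z,z}=\pi_{z,z}|_H\in L^p(H)\cap C_b(H)$ for $z\in\Hi_0$; hence $u|_H=(\pi|_H)_{x,y}\in A_{L^p}(H)$. For the reverse inclusion I would invoke Proposition \ref{linear combination}: it suffices to show each $u\in P(H)\cap L^p(H)$ is the restriction of a member of $A_{L^p}(G)$. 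Its extension by zero $\tilde u$ lies in $P(G)\cap L^p(G)\subseteq A_{L^p}(G)$ by the remarks above, and $\tilde u|_H=u$. To pass from these generators to all of $A_{L^p}(H)$, use that extension by zero is a bounded (indeed isometric, \cite{e}) map $B(H)\to B(G)$ and that $A_{L^p}(G)$ is closed: it therefore carries $A_{L^p}(H)=\overline{\mathrm{span}}(P(H)\cap L^p(H))$ into $A_{L^p}(G)$, and restricting back recovers the original function. Alternatively, $\tilde u$ is a coefficient function of $\mathrm{ind}_H^G\pi_u$, an $L^p$-representation of $G$ by the result recalled in Section 3, where $\pi_u$ is the GNS representation of $u$.

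For the $B_{L^p}$ statement I would work on the $C^*$-side. By the identification of $B_D$ with $C^*_D(G)^*$ we have $B_{L^p}(G)=C^*_{L^p}(G)^*$ and $B_{L^p}(H)=C^*_{L^p}(H)^*$, with pairing $\langle u,f\rangle=\int uf$. The essential input is the equality $\|\cdot\|_{L^p(G)}|_{L^1(H)}=\|\cdot\|_{L^p(H)}$ recorded in Section 3 for open $H$, which makes $L^1(H)\hookrightarrow L^1(G)$ extend to an isometric inclusion $C^*_{L^p}(H)\hookrightarrow C^*_{L^p}(G)$. If $u\in B_{L^p}(G)$, then for $f\in L^1(H)\subseteq L^1(G)$ we have $\bigl|\int_H (u|_H)f\bigr|=\bigl|\int_G uf\bigr|\le\|u\|\,\|f\|_{L^p(G)}=\|u\|\,\|f\|_{L^p(H)}$, so $u|_H$, paired against $L^1(H)$, is $\|\cdot\|_{L^p(H)}$-bounded, whence $u|_H\in B_{L^p}(H)$; thus $B_{L^p}(G)|_H\subseteq B_{L^p}(H)$. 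Conversely, any $u\in B_{L^p}(H)=C^*_{L^p}(H)^*$ extends by Hahn--Banach to some $\tilde u\in C^*_{L^p}(G)^*=B_{L^p}(G)$; since $\tilde u$ and $u$ agree on $L^1(H)$ they agree a.e.\ on $H$, hence everywhere by continuity, so $u=\tilde u|_H\in B_{L^p}(G)|_H$.

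The proof is mostly a matter of unwinding definitions once the open-subgroup facts above and the $C^*$-norm agreement of Section 3 are in hand; the step I expect to require the most care is the passage in the $A_{L^p}$ argument from the positive definite generators to the whole algebra, which is where boundedness of extension by zero $B(H)\to B(G)$ (or, in the alternative, the behaviour of $\mathrm{ind}_H^G$ under restriction to $H$) together with Proposition \ref{linear combination} enter. Checking that zero-extension preserves positive-definiteness and that the Haar measures restrict correctly is routine but worth stating explicitly.
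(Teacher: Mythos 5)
Your proof is correct. The $B_{L^p}$ half and the inclusion $A_{L^p}(G)|_H\subseteq A_{L^p}(H)$ coincide with the paper's argument (Hahn--Banach against the isometric embedding $C^*_{L^p}(H)\hookrightarrow C^*_{L^p}(G)$ furnished by $\|\cdot\|_{L^p(G)}|_{L^1(H)}=\|\cdot\|_{L^p(H)}$, and restriction of representations, respectively); you even spell out the ``similar but simpler argument'' for $B_{L^p}(G)|_H\subseteq B_{L^p}(H)$ that the paper leaves to the reader. Where you genuinely diverge is the inclusion $A_{L^p}(H)\subseteq A_{L^p}(G)|_H$: the paper obtains it from the fact recalled at the end of Section 3 that $\mathrm{ind}_H^G\sigma$ is an $L^p$-representation of $G$ whenever $\sigma$ is an $L^p$-representation of $H$ (together with the standard observation that, for $H$ open, $\sigma$ is a subrepresentation of $(\mathrm{ind}_H^G\sigma)|_H$, so every coefficient of $\sigma$ is the restriction of a coefficient of the induced representation) --- this is exactly your parenthetical alternative. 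Your primary route instead extends functions in $P(H)\cap L^p(H)$ by zero and applies Proposition \ref{linear combination} on both sides, using that extension by zero is an isometry of $B(H)$ into $B(G)$ to pass from generators to closed linear spans. Both are sound; the induction argument is shorter given what Section 3 already establishes, while your route is more self-contained at the level of functions, needing only the coset-block verification that zero-extension preserves positive definiteness (and continuity, since $H$ is clopen) together with the isometry of extension by zero from \cite{e}, and it avoids any appeal to induced representations.
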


\begin{proof}
The first part of the statement is deduced by similar reasoning as used in the previous section. Indeed, the equality $A_{L^p}(G)|_H=A_{L^p}(H)$ follows from the definition of $L^p$-Fourier spaces since $\pi|_H$ is an $L^p$-representation for every $L^p$-representation $\pi$ of $G$, and $\mathrm{ind}_H^G \sigma$ is an $L^p$-representation for every $L^p$-representation $\sigma$ of $H$.

We now proceed to prove the second part of the statement. Notice that since $H$ is an open subgroup of $G$, $L^1(H)$ embeds naturally into $L^1(G)$. A similar argument as above shows that this extends to a natural embedding of $C^*_{L^p}(H)$ into $C^*_{L^p}(G)$. Let $u\in C^*_{L^p}(H)^*=B_{L^p}(H)$. Then, by the Hahn-Banach theorem, there is an element $\widetilde u\in C^*_{L^p}(G)^*=B_{L^p}(G)$ extending $u$ as a linear functional. Then for $f\in L^1(H)\subset L^1(G)$,
$$\int_H \widetilde u(s)f(s)\,ds=\int_G \widetilde u(s)f(s)\,ds=\,<\widetilde u, f>\,=\,<u,f>\,=\int_H u(s)f(s)\,ds.$$
So $u=\widetilde u|_H$ almost everywhere. Since $u$ and $\widetilde u$ are each continuous functions, this implies that $u=\widetilde u|_H$ and, hence, that $B_{L^p}(H)\subset B_{L^p}(G)|_H$. A similar but simpler argument shows that $B_{L^p}(H)\supset B_{L^p}(G)|_H$.
\end{proof}

The above proposition can fail, even for $A_{L^p}$, when $H$ is a non-open closed subgroup of $G$. This is demonstrated by example in Remark \ref{Herz fails}.

We finish this section by giving a first class of examples of groups $G$ which show that $A_{L^p}(G)$ and $B_{L^p}(G)$ are interesting subspaces of $B(G)$ for $2<p<\infty$.

\begin{prop}
Let $\Gamma$ be a discrete group containing a copy of a noncommutative free group. Then $B_{\ell^p}(\Gamma)$ is distinct for every $p\in [2,\infty)$. Hence, $A_{\ell^p}(\Gamma)$ is also distinct for every $p\in [2,\infty)$.
\end{prop}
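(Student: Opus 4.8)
The plan is to reduce everything to the free group case via Proposition~\ref{open} and the correspondence between Fourier--Stieltjes spaces and group $C^*$-algebras recalled in Section~\ref{spaces}. Recall from the end of Section~3 that if $\Gamma$ contains a copy of a noncommutative free group $\F_d$ (say $2\le d<\infty$, or even $\F_\infty$, which in turn contains $\F_2$), then $\F_d$ is an \emph{open} subgroup of $\Gamma$ since $\Gamma$ is discrete, and moreover $\|\cdot\|_{\ell^p(\Gamma)}|_{\ell^1(\F_d)}=\|\cdot\|_{\ell^p(\F_d)}$ for every $p\in[1,\infty)$. The first step, then, is to establish the claim for $\F_d$ itself, and the second step is to transfer it to $\Gamma$.

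For the free group, the key input is Okayasu's theorem (cited in Section~3): the $C^*$-algebras $C^*_{\ell^p}(\F_d)$ are pairwise distinct for $p\in[2,\infty)$. More precisely, for $2\le p<q<\infty$ the identity map on $\ell^1(\F_d)$ does not extend to a $*$-isomorphism $C^*_{\ell^q}(\F_d)\to C^*_{\ell^p}(\F_d)$; equivalently, there is no constant bounding $\|f\|_{\ell^p}$ by $\|f\|_{\ell^q}$ uniformly over $f\in\ell^1(\F_d)$ --- indeed one always has $\|f\|_{\ell^p}\le\|f\|_{\ell^q}$ for $p\le q$ (since an $\ell^p$-representation is an $\ell^q$-representation, as $\ell^p(\F_d)\subset\ell^q(\F_d)$ among bounded functions), so distinctness of the $C^*$-algebras means this inequality is strict in the sense that the $\ell^q$-representations are \emph{not} weakly contained in the $\ell^p$-representations. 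By the one-to-one correspondence recalled in Section~\ref{spaces} between group $C^*$-algebras $C^*_\pi$ and Fourier--Stieltjes spaces $B_\pi$ --- namely $B_{\mc S}\subset B_{\mc S'}$ if and only if $\sup_{\pi\in\mc S}\|\pi(f)\|\le\sup_{\sigma\in\mc S'}\|\sigma(f)\|$ for all $f\in\ell^1$ --- we always have $B_{\ell^p}(\F_d)\subset B_{\ell^q}(\F_d)$ for $p\le q$, and this inclusion is proper precisely when $C^*_{\ell^p}(\F_d)\ne C^*_{\ell^q}(\F_d)$, which is exactly Okayasu's result. Hence $B_{\ell^p}(\F_d)$ are pairwise distinct for $p\in[2,\infty)$.

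For the transfer step, suppose toward a contradiction that $B_{\ell^p}(\Gamma)=B_{\ell^q}(\Gamma)$ for some $2\le p<q<\infty$. By Proposition~\ref{open} applied to the open subgroup $H=\F_d\le\Gamma$, we get
$$B_{\ell^p}(\F_d)=B_{\ell^p}(\Gamma)|_{\F_d}=B_{\ell^q}(\Gamma)|_{\F_d}=B_{\ell^q}(\F_d),$$
contradicting the previous paragraph. Therefore $B_{\ell^p}(\Gamma)$ is distinct for every $p\in[2,\infty)$. The final sentence of the statement follows because $B_{\ell^p}(G)$ is by definition the weak*-closure of $A_{\ell^p}(G)$ in $B(G)$; if $A_{\ell^p}(\Gamma)=A_{\ell^q}(\Gamma)$ for some $p\ne q$ in $[2,\infty)$, then taking weak*-closures would force $B_{\ell^p}(\Gamma)=B_{\ell^q}(\Gamma)$, again a contradiction.

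The main obstacle is essentially bookkeeping rather than a genuine difficulty: one must be careful that Okayasu's distinctness result is phrased (or can be restated) at the level that exactly matches the $C^*_\pi\leftrightarrow B_\pi$ dictionary --- i.e., that ``$C^*_{\ell^p}(\F_d)$ distinct'' really does mean the canonical quotient maps $C^*_{\ell^q}(\F_d)\to C^*_{\ell^p}(\F_d)$ are not injective, and hence that the weak containment in one direction is strict. Granting this (which is how the $\ell^p$-algebra literature uses the term, and is already invoked in Section~3 of this paper), the argument is a clean two-line reduction. One should also note in passing that $\F_2\le\F_d$ for all $2\le d\le\infty$, so ``contains a copy of a noncommutative free group'' may be taken to mean ``contains a copy of $\F_2$'' without loss of generality.
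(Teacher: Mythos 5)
Your proposal is correct and follows essentially the same route as the paper: the paper deduces the first statement directly from the remark at the end of Section~3 that $C^*_{\ell^p}(\Gamma)$ is distinct for each $p\in[2,\infty)$ (itself resting on Okayasu's theorem plus the open-subgroup restriction of the $\ell^p$-norms) together with the $C^*_\pi\leftrightarrow B_\pi$ correspondence, and gets the second statement from the fact that $B_{\ell^p}(\Gamma)$ is the weak*-closure of $A_{\ell^p}(\Gamma)$, exactly as you do. Your only slip is the phrase ``no constant bounding $\|f\|_{\ell^p}$ by $\|f\|_{\ell^q}$,'' where the roles of $p$ and $q$ are reversed, but your immediately following clarification states the correct direction, so nothing is lost.
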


\begin{proof}
The first statement is immediate from previous comments since $C^*_{\ell^p}(\Gamma)$ is distinct for each $p\in [2,\infty)$. The second statement follows from the first since $B_{\ell^p}(\Gamma)$ is the weak*-closure of $A_{\ell^p}(\Gamma)$.
\end{proof}

\section{$L^p$-Fourier algebras of Abelian groups}

In this section we show that the algebras $A_{L^p}(G)$ are distinct for every $p\in [2,\infty)$ when $G$ is a noncompact locally compact abelian group. We will later see that this phenomena does not generalize to the setting of general noncompact locally compact groups which shows that we are required to use tools from commutative harmonic analysis. Before entering into proofs, we provide a brief summary of the tools of which we make use and refer the reader to Graham and McGehee's book \cite{gm} for more details.

Let $\Gamma$ be a discrete abelian group. A subset $\Theta$ of $\Gamma$ is said to be {\it dissociate}  if every element $\omega\in \Gamma$ can be written in at most one way as a product
$$\omega=\prod_{j=1}^n\theta_j^{\epsilon_j}$$
where $\theta_1,\ldots,\theta_n\in\Theta$ are distinct elments, $\epsilon_j=\pm 1$ if $\theta_j\neq 1$, and $\epsilon_j=1$ if $\theta_j^2=1$. As an example, if $\Gamma=\Z$ then the set $\{3^j : j\geq 1\}$ is dissociate. As in the case of the integers, every infinite discrete abelian group admits an infinite dissociate set.

Let $G$ be a compact abelian group with normalized haar measure and $\Gamma=\widehat{G}$ be the dual group of $G$. If $\gamma$ is a group element of $\Gamma$ such that  $\gamma^2\neq 1$ and $a(\gamma)$ is a constant with $|a(\gamma)|\leq 1/2$, then the trigonometric polynomial
$$q_\gamma:=1+a(\gamma)\gamma+\overline{a(\gamma)}\overline{\gamma}$$
is a positive function on $G$ with $\|q_\gamma\|_1=1$. Similarly, if $\gamma\in\Gamma\backslash\{1\}$ has the property that $\gamma^2=1$ and $0\leq a(\gamma)<1$, then $q_\gamma:=1+a(\gamma)\gamma$ is a positive function which integrates over $G$ to 1. We will consider weak* limits of products of polynomials of this type.

Let $\Theta\subset \Gamma$ be a dissociate set. To each $\theta\in \Theta$ assign a value $a(\theta)\in\C$ with the imposed restrictions from above. For each finite subset $\Phi\subset\Theta$ define $P_{\Phi}=\prod_{\theta\in\Phi} q_\theta$. This being a product of positive functions is a positive function on $G$ with Fourier transform
$$ \widehat{P_\Phi}(\gamma)=\left\{\begin{array}{c l}
\prod_{\theta\in\Phi} a(\theta)^{(\epsilon_\theta)}, & \gamma=\prod_{\theta\in \Phi}\theta^{\epsilon_\theta} \\
0, & \tn{otherwise}
\end{array}\right.$$
where $\epsilon_\theta$ range over $\{-1,0,1\}$ and
$$a(\theta)^{(\epsilon_\theta)}=\left\{\begin{array}{c l}
1, & \epsilon_\theta=0 \\
a(\theta), & \epsilon_\theta=1 \\
\overline{a(\theta)}, & \epsilon_\theta=-1.
\end{array}\right.$$
It follows that as $\Phi\nearrow\Theta$, $P_{\Phi}$ converges weak* to a measure $\mu$ on $G$ where
$$ \widehat{\mu} (\gamma)= \left\{\begin{array}{c l}
\prod_{\theta\in\Theta} a(\theta)^{(\epsilon_\theta)}, & \gamma=\prod_{\theta\in \Theta}\theta^{\epsilon_\theta},\  \epsilon_\theta=0\tn{ for all but finitely many }\theta \\
0, & \tn{otherwise}
\end{array}\right. $$
The measure $\mu$ is said to be based on $\Theta$ and $a$. This method of constructing measures is called the Riesz Product construction and the set of all such constructions is denoted $R(G)$.

In 1959 Zygmund \cite{z} proved that a measure $\mu\in R(\T)$ based on $\Theta$ and $a$ is an element of $L^1(\T)$ if and only if $a\in\ell^2(\Theta)$. This result was extended to all compact abelian groups $G$ by Hewitt and Zuckerman \cite{hz} in 1966. If $\Gamma$ is the dual of a compact group $G$ and $\mu\in R(G)$ is based on $\Theta$ and $a$, then $\widehat{\mu}\in A(\Gamma)=A_{\ell^2}(\Gamma)$ if and only if $a\in\ell^2(\Theta)$. We will demonstrate that the analogue of this theorem holds when $2$ is replaced with $p$ for $2\leq p<\infty$. Towards this goal, we begin by proving an elementary lemma which is surely known but we include for convenience to the reader and lack of a reference.

\begin{lemma}\label{technical lemma}
Suppose that $0<\alpha<1$ and $\{x_n\}$ is a bounded sequence  but $\{x_n\}\not\in \ell^p$. Then there exists a bounded sequence $\{y_n\}$ so that $\{x_ny_n\}\in \ell^p$ but $\{x_ny_n^\alpha\}\not\in\ell^p$.
\end{lemma}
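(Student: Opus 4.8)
The plan is to reduce everything to a one-variable series inequality by a blocking argument. Set $M = \sup_n |x_n|$; note $M\in(0,\infty)$ since $M=0$ would force $\{x_n\}\in\ell^p$. Replacing $x_n$ by $|x_n|$ if necessary, I may assume $x_n\geq 0$ for all $n$ (this only changes $|x_ny_n|$ and $|x_ny_n^\alpha|$ by absolute values of $y_n$, which I will take nonnegative anyway). Since $\sum_n x_n^p=\infty$, I would partition $\N$ into consecutive finite blocks $B_1,B_2,\dots$ by the greedy rule: let $B_k$ be the shortest block immediately following $B_1\cup\cdots\cup B_{k-1}$ for which $S_k:=\sum_{n\in B_k}x_n^p\geq 1$. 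Such a block exists because the tails of $\sum x_n^p$ diverge, and by minimality of $B_k$ together with $x_n^p\leq M^p$ one gets
$$1\leq S_k\leq 1+M^p=:C<\infty\qquad\text{for every }k.$$
This uniform comparability of the block masses $S_k$ to $1$ is the one place where boundedness of $\{x_n\}$ is genuinely used.

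Next I would choose the decay rate. Since $0<\alpha<1$ we have $1/\alpha>1$, so fix a real number $s$ with $1<s\leq 1/\alpha$; then $\sum_k k^{-s}<\infty$ while $\sum_k k^{-s\alpha}=\infty$ because $s\alpha\leq 1$. Define $c_k=k^{-s/p}\in(0,1]$ and set $y_n=c_k$ whenever $n\in B_k$, so that $\{y_n\}$ is bounded by $1$. Summing block by block,
$$\sum_n (x_ny_n)^p=\sum_k c_k^{\,p}\,S_k\leq C\sum_k k^{-s}<\infty,$$
so $\{x_ny_n\}\in\ell^p$, whereas (using $S_k\geq 1$)
$$\sum_n (x_ny_n^{\alpha})^p=\sum_k c_k^{\,\alpha p}\,S_k\geq \sum_k k^{-s\alpha}=\infty,$$
so $\{x_ny_n^{\alpha}\}\notin\ell^p$, which is exactly the conclusion.

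The argument is entirely elementary, so I do not anticipate a serious obstacle; the only points that need care are (i) checking that the greedy blocking yields blocks whose $p$-mass is bounded \emph{above}, which is where the hypothesis that $\{x_n\}$ is bounded enters, and (ii) selecting the exponent $s\in(1,1/\alpha]$ so that replacing $s$ by $s\alpha$ converts a convergent $p$-series into a divergent one — this is precisely where $\alpha<1$ is used.
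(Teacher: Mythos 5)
Your proof is correct and follows essentially the same strategy as the paper's: partition $\N$ into blocks of roughly unit $p$-mass and weight the $k$-th block by a power of $k$ chosen so that the resulting series converges while its $\alpha$-th-power version diverges. The only difference is cosmetic — the paper first reduces to $p=1$ and then splits into the cases $\{x_n\}\in c_0$ and $\limsup|x_n|>0$, whereas your greedy blocking, with the upper bound $S_k\leq 1+M^p$ supplied by boundedness, handles both cases at once.
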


\begin{proof}
Clearly it suffices to consider the case when $p=1$. We first focus our attention to the case when $\{x_n\}\in c_0$. Then we can choose mutually disjoint subsets $I_1,I_2,\ldots$ of $\N$ so that $\sum_{n\in I_k}|x_n|=1$ for each $k$. Define
$$ y_{n}=\left\{\begin{array}{c l}
k^{-\frac{1}{\alpha}} & \tn{if }n\in I_k \\
0 & \tn{otherwise}
\end{array}\right.$$
Then
%\begin{eqnarray*}
%\sum_{n\in\N} |x_ny_{n}| &=& \sum_k \sum_{n\in I_k} |x_ny_{n}| \\
%&=& \sum_k \sum_{n\in I_k} |x_n| k^{-\frac{1}{\alpha}} \\
%&=& \sum_k k^{-\frac{1}{\alpha}} <\infty,
%\end{eqnarray*}
$$\sum_{n\in\N} |x_ny_{n}| = \sum_k \sum_{n\in I_k} |x_ny_{n}| = \sum_k \sum_{n\in I_k} |x_n| k^{-\frac{1}{\alpha}} = \sum_k k^{-\frac{1}{\alpha}} <\infty,$$
but
%\begin{eqnarray*}
%\sum_{n\in\N} |x_ny_n^\alpha| &=& \sum_k \sum_{n\in I_k} |x_ny_{n}^\alpha| \\
%&=& \sum_k \sum_{n\in I_k} |x_n| k^{-1} \\
%&=& \sum_k k^{-1} =\infty.
%\end{eqnarray*}
$$\sum_{n\in\N} |x_ny_n^\alpha| = \sum_k \sum_{n\in I_k} |x_ny_{n}^\alpha| = \sum_k \sum_{n\in I_k} |x_n| k^{-1} = \sum_k k^{-1} =\infty.$$

Now assume that $\limsup |x_n|>0$. Then we can find $\delta>0$ and a subsequence $\{x_{n_k}\}$ so that $
|x_{n_k}|\geq \delta$ for every $k$. Defining
$$y_{n}=\left\{\begin{array}{c l}
k^{-\frac{1}{\alpha}} & \tn{if }n=n_k \\
0 & \tn{otherwise.}
\end{array}\right.$$
gives the desired result.

%We now deduce that it is possible to build a sequence $\{y_n\}$ which works for every $\alpha\in (0,1)$. First observe that if $0<\alpha_1<\alpha_2<1$, then $x_ny_{n,\alpha_2}^{\alpha_1}>x_ny_{n,\alpha_2}^{\alpha_2}$ implies that
%$$\sum_{n=1}^\infty x_ny_{n,\alpha_2}^{\alpha_1}.$$
%Let $\{\alpha_k\}\subset (0,1)$ be a sequence increasing to 1 and let $s_k=\sum_{n=1}^\infty x_n y_{n,\alpha_k}<\infty$. Define $$ y_n=\sum_{k=1}^\infty \frac{1}{2^k s_k} y_{n,\alpha_k}$$
%for each $n\in\N$. Then $\{x_ny_n\}\in\ell^1$ and, by our above observation, $\{x_ny_n^{\alpha}\}\not\in\ell^1$ for each $\alpha\in (0,1)$.
\end{proof}

We are now prepared to prove the main result of this section in the case of discrete groups.

\begin{theorem}\label{discrete abelian}
Let $G$ be a compact abelian group with dual group $\Gamma$ and $\mu\in R(G)$ be based on $\Theta$ and $a$. Then $\widehat{\mu}\in A_{\ell^p}(\Gamma)$ if and only if $a\in \ell^p(\Theta)$.
\end{theorem}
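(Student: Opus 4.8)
The plan is to connect the membership $\widehat{\mu} \in A_{\ell^p}(\Gamma)$ to the $L^p$-integrability of a Riesz product measure on $G$, thereby reducing to the Zygmund--Hewitt--Zuckerman theorem in its full strength. The key observation is that $A_{\ell^p}(\Gamma)$ is, by Proposition \ref{linear combination}, the closed linear span in $B(\Gamma) \cong M(G)$ of the positive definite functions on $\Gamma$ lying in $\ell^p(\Gamma)$; under the Fourier--Stieltjes identification $B(\Gamma) = M(G)$, positive definite functions on $\Gamma$ correspond to positive measures on $G$, and a positive definite function lies in $\ell^p(\Gamma)$ precisely when the corresponding positive measure has its Fourier--Stieltjes transform in $\ell^p$. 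So the statement becomes: the Riesz product measure $\mu$ lies in the closed linear span (in $M(G)$, i.e.\ in total variation norm) of the positive measures on $G$ whose transforms are $p$-summable, if and only if $a \in \ell^p(\Theta)$.

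For the ``if'' direction, suppose $a \in \ell^p(\Theta)$. I would show directly that $\widehat{\mu}$, viewed as a function on $\Gamma$, is already $p$-summable, so that $\widehat{\mu}$ is itself a positive definite element of $\ell^p(\Gamma)$ and hence lies in $A_{\ell^p}(\Gamma)$. The support of $\widehat{\mu}$ consists of the words $\gamma = \prod_{\theta} \theta^{\epsilon_\theta}$ (finitely many nonzero $\epsilon_\theta$), on which $|\widehat{\mu}(\gamma)| = \prod_{\theta : \epsilon_\theta \neq 0} |a(\theta)|$; by dissociativity this representation is unique, so $\sum_\gamma |\widehat{\mu}(\gamma)|^p = \prod_{\theta \in \Theta} (1 + 2|a(\theta)|^p)$ (with the obvious modification $1 + |a(\theta)|^p$ for order-two elements), which is finite iff $\sum_\theta |a(\theta)|^p < \infty$. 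Actually this computation shows $\widehat{\mu} \in \ell^p(\Gamma)$ iff $a \in \ell^p(\Theta)$, which disposes of one half cleanly but does \emph{not} immediately give the converse: membership in $A_{\ell^p}(\Gamma)$ is weaker than being a $p$-summable function, so the ``only if'' direction needs a separate and more delicate argument.

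For the ``only if'' direction I would argue by contrapositive: assume $a \notin \ell^p(\Theta)$ and produce an obstruction to $\widehat{\mu} \in A_{\ell^p}(\Gamma)$. Since $\{a(\theta)\}$ is bounded (by $1/2$) but not in $\ell^p$, Lemma \ref{technical lemma} applies: taking $\alpha = 2/p$ (so $0 < \alpha < 1$, valid since $p > 2$; the case $p=2$ is the classical theorem) I get a bounded sequence $\{y_\theta\}$ with $\{a(\theta) y_\theta\} \in \ell^p(\Theta)$ but $\{a(\theta) y_\theta^{\alpha}\} \notin \ell^p(\Theta)$, equivalently $\{(a(\theta)y_\theta^\alpha)^{p/2}\} \notin \ell^2$, i.e.\ $\{a(\theta) y_\theta^\alpha\}^{?}$... more usefully: $\{a(\theta)y_\theta\} \notin \ell^2$ after adjusting exponents. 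The strategy is to form a second Riesz product measure $\nu \in R(G)$ based on the same $\Theta$ with coefficients $b(\theta) = c\, y_\theta$ (rescaled so $|b(\theta)| \le 1/2$), whose transform $\widehat{\nu}$ is a bounded multiplier-like object, and to use the fact that $A_{\ell^p}(\Gamma)$ is an ideal in $B(\Gamma)$ (Proposition \ref{basic} context, since $\ell^p$ is an ideal in $C_b$) closed under the relevant operations, together with Hölder (Proposition \ref{Holder}) running the exponents so that a suitable product of $\widehat{\mu}$ with $\widehat{\nu}$-type factors, if $\widehat{\mu}$ were in $A_{\ell^p}$, would land in $A_{\ell^{q}}(\Gamma)$ for an appropriate $q$ and hence, via the above ``if''-computation applied in reverse or via the Hewitt--Zuckerman $L^1$-criterion, force the coefficient sequence $\{a(\theta) y_\theta^{\alpha}\}$ into $\ell^2$ — a contradiction. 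Concretely: if $\widehat{\mu} \in A_{\ell^p}(\Gamma)$ then for any measure $\sigma$ based on $\Theta$ and $d$ with $d \in \ell^{p'}$-type control, the product $\widehat{\mu}\,\widehat{\sigma}$ lies in $A_{\ell^1}(\Gamma) = A(\Gamma)$, which by Zygmund--Hewitt--Zuckerman means the product Riesz measure $\mu * \sigma$ (based on $\Theta$ with coefficients the ``convolution product'' of $a$ and $d$) is in $L^1(G)$, forcing $\{a(\theta)d(\theta)\} \in \ell^2$; choosing $d(\theta) = y_\theta^{\alpha}$ scaled appropriately then contradicts $\{a(\theta) y_\theta^\alpha\} \notin \ell^2$.

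The main obstacle is precisely making the ``only if'' direction rigorous: the identity $A_{\ell^1}(\Gamma) = A(\Gamma)$ (Proposition \ref{basic}(i)) together with the classical Zygmund--Hewitt--Zuckerman theorem gives a clean endpoint criterion at $p=1$, so everything hinges on engineering, via Lemma \ref{technical lemma} and the Hölder-type Proposition \ref{Holder}, a \emph{single} auxiliary Riesz product whose multiplication against $\widehat{\mu}$ (assumed in $A_{\ell^p}$) pushes the result into $A_{\ell^1}(\Gamma)$ while simultaneously being \emph{exactly} bad enough that the resulting coefficient sequence fails the $\ell^2$ test. One has to be careful that multiplying the two Riesz products really does produce another Riesz product on the same dissociate set $\Theta$ with coefficients given by pointwise products (this uses that distinct words in $\Theta$ multiply to distinct words, again dissociativity), and that the rescaling needed to keep all coefficients bounded by $1/2$ does not destroy the $\ell^2$/non-$\ell^2$ dichotomy. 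Verifying these bookkeeping points — and handling the order-two elements of $\Theta$ separately throughout — is the technical heart of the proof; the conceptual skeleton (Prop.\ \ref{linear combination} to reduce to positive definite functions, the direct transform computation for ``if'', and Lemma \ref{technical lemma} plus the $p=1$ classical theorem for ``only if'') is straightforward.
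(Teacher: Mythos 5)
Your ``if'' direction and your overall skeleton (reduce to positive definite functions via Proposition \ref{linear combination}, compute $\|\widehat\mu\|_{\ell^p}$ directly on the dissociate words, and for the converse multiply by an auxiliary Riesz product and invoke the classical Hewitt--Zuckerman $\ell^2$ criterion at the endpoint) are exactly the paper's approach, and the ``if'' half is complete. The gap is in the one concrete route you propose for the ``only if'' direction. You suggest taking the auxiliary coefficients $d$ under ``$\ell^{p'}$-type control'' so that H\"older pushes $\widehat\mu\,\widehat\sigma$ into $A_{\ell^1}(\Gamma)=A(\Gamma)$ and then reading off $ad\in\ell^2$. This can never produce a contradiction: for $p>2$ the conjugate exponent satisfies $p'<2$, so $d\in\ell^{p'}\subset\ell^2$, and since $a$ is bounded the product $ad$ lies in $\ell^2$ automatically, whether or not $a\in\ell^p$. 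The duality that actually detects $a\notin\ell^p$ through the $\ell^2$ criterion is $\frac1p+\frac1q=\frac12$, i.e.\ $q=\frac{2p}{p-2}>2$: one needs $c\in\ell^q(\Theta)$ with $ac\notin\ell^2(\Theta)$, and the H\"older target is $A_{\ell^2}(\Gamma)=A(\Gamma)$, not $A_{\ell^1}(\Gamma)$ (these are the same space, but only the former choice of exponents makes the auxiliary factor both admissible and obstructive).

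The paper's execution is: apply Lemma \ref{technical lemma} to get a bounded $b$ with $ab\in\ell^p$ but $ab^{\alpha}\notin\ell^p$ for the exponent $\alpha=\frac{p-2}{p}$ appearing in the identity $|ac|^2=|ab^{(p-2)/p}|^p$ below, then set $c=(ab)^{(p-2)/2}$. Then $\sum|c|^q=\sum|ab|^p<\infty$ puts $\widehat\nu\in A_{\ell^q}(\Gamma)$, Proposition \ref{Holder} puts $\widehat{\mu*\nu}=\widehat\mu\cdot\widehat\nu\in A(\Gamma)$, Hewitt--Zuckerman forces $ac\in\ell^2$, and $|ac|^2=|a|^p|b|^{p-2}=|ab^{(p-2)/p}|^p$ contradicts the choice of $b$. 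Your proposed $\alpha=2/p$ does not mesh with any such power of $ab$ except when $p=4$, and the passage in your write-up where you try to fix the exponent (``$\{a(\theta)y_\theta^{\alpha}\}^{?}$\dots more usefully\dots'') is exactly where the argument is incomplete. The remaining bookkeeping you worry about (products of Riesz products multiply coefficients pointwise on a dissociate set; a constant rescaling of $c$ to keep $|c|\le 1/2$ changes none of the $\ell^q$ or $\ell^2$ memberships) is indeed routine.
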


\begin{proof}
First we suppose that $ \sum_{\theta\in\Theta} \big|a(\theta)\big|^p<\infty$ and let
$$\Omega(\Theta)=\{\theta_1\cdots \theta_n \mid \theta_1^{\epsilon_1},\ldots,\theta_n^{\epsilon_n}\in\Theta \tn{ distinct, }\epsilon_1,\ldots,\epsilon_n=\pm 1,\, n\geq 0\}.$$ Then
\begin{eqnarray*}
\|\widehat{\mu}\|_p^p &=& \sum_{\omega\in\Omega(\Theta)}\big|\widehat{\mu}(\omega)\big|^p \\
%&\leq & 1+\sum_{n=1}^\infty 2^n\sum_{\{\theta_1,\ldots,\theta_n\}\subset \Theta\atop|\{\theta_1,\ldots,\theta_n\}|=n} \big|a(\theta_1)\cdots a(\theta_n)\big|^p\\
&\leq & 1+\sum_{n=1}^\infty \sum_{\Phi\subset \Theta\atop |\Phi|=n}2^n\prod_{\theta\in\Phi}|a(\theta)|^p\\
%&\leq & 1+\sum_{n=1}^\infty 2^n\sum_{\theta_1,\ldots,\theta_n\in\Theta}\frac{\big|a(\theta_1)\cdots a(\theta_n)\big|^p}{n!} \\
&\leq & 1+\sum_{n=1}^\infty \frac{2^n}{n!} \bigg(\sum_{\theta\in\Theta}\big|a(\theta)\big|^p\bigg)^n \\
&=& \mathrm{exp}\bigg\{2\sum_{\theta\in\Theta}\big|a(\theta)\big|^p\bigg\}<\infty.
\end{eqnarray*}
Hence, $\widehat{\mu}\in A_{\ell^p}(\Gamma)$.

Now suppose that $ \sum_{\theta\in\Theta} \big|a(\theta)\big|^p=\infty$ but $\widehat{\mu}\in A_{\ell^p}(\Gamma)$. Hewitt and Zuckerman showed this is not possible for $p=2$, so we will assume without loss of generality that $p>2$. Choose a sequence $\{b(\theta)\}\in \ell^\infty(\Theta)$  with $\|b\|_\infty\leq 1$ so that $\{a(\theta)b(\theta)\}\in \ell^p(\Theta)$ but $\{a(\theta)b(\theta)^{\alpha}\}\not\in \ell^p(\Theta)$ for $\alpha=\frac{p-2}{2p}$. Let $\nu\in R(G)$ be based on $\Theta$ and $c:=\{(a(\theta)b(\theta))^{\frac{p-2}{2}}\}$. Define $q=\frac{2p}{p-2}$ (this is chosen so that $1/p+1/q=1/2$). Then
$$ \sum_{\theta\in\Theta} \big|c(\theta)\big|^q = \sum_{\theta\in\Theta}\big|a(\theta)b(\theta)\big|^p<\infty $$
implies that $\widehat{\nu}\in \ell^q(\Theta)$. So $\widehat{\mu}\cdot\widehat{\nu}\in A_{\ell^2}(\Gamma)=A(\Gamma)$ by Proposition \ref{Holder}. Observe that $\mu *\nu$ is the element in $R(G)$ generated by $\Theta$ and $a\cdot c$. So $a\cdot c\in\ell^2(\Theta)$. But, by our assumption on $b$,
$$\sum_{\theta\in\Theta}\big|a(\theta)c(\theta)\big|^2=\sum_{\theta\in\Theta}\big|a(\theta)b(\theta)^{\frac{p-2}{2p}}\big|^p=\infty,$$
a contradiction. Therefore, $\widehat{\mu}\in A_{\ell^p}(\Gamma)$ iff $a\in\ell^p(\Theta)$.
\end{proof}

\begin{corollary}
Let $\Gamma$ be an infinite discrete Abelian group. The subspaces $A_{\ell^p}(\Gamma)$ of $B(\Gamma)$ are distinct for every $p\in [2,\infty)$.
\end{corollary}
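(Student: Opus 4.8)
The plan is to show that for every pair $2\le p<q<\infty$ the inclusion $A_{\ell^p}(\Gamma)\subseteq A_{\ell^q}(\Gamma)$ is \emph{strict}, which immediately yields that the spaces $A_{\ell^p}(\Gamma)$, $p\in[2,\infty)$, are pairwise distinct. The strict separation will come from the Riesz product machinery together with Theorem~\ref{discrete abelian}, while the reverse inclusion $A_{\ell^p}(\Gamma)\subseteq A_{\ell^q}(\Gamma)$ is an easy general fact for discrete groups.

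First I would fix the duality picture. Since $\Gamma$ is an infinite discrete abelian group, its dual $G:=\widehat{\Gamma}$ is an infinite compact abelian group with $\widehat{G}=\Gamma$, and $B(\Gamma)$ identifies with $M(G)$ via the Fourier--Stieltjes transform. By the fact recalled just before Theorem~\ref{discrete abelian}, $\Gamma$ contains an infinite dissociate set $\Theta$, which I enumerate as $\Theta=\{\theta_n:n\ge 1\}$.

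Next, with $2\le p<q<\infty$ fixed, I would choose weights for the Riesz product construction by setting $a(\theta_n)=\frac{1}{2}\,n^{-1/p}$ for all $n$. For $\theta_n$ with $\theta_n^2\neq 1$ this satisfies $|a(\theta_n)|\le \frac{1}{2}$, and for the order-two elements (which arise when $\Gamma$ has $2$-torsion) it lies in $[0,1)$, so in both cases the constraints on the $q_{\theta}$'s are met. Moreover $\sum_n |a(\theta_n)|^q=\frac{1}{2^q}\sum_n n^{-q/p}<\infty$ because $q/p>1$, while $\sum_n|a(\theta_n)|^p=\frac{1}{2^p}\sum_n n^{-1}=\infty$; thus $a\in\ell^q(\Theta)\setminus\ell^p(\Theta)$. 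Letting $\mu\in R(G)$ be the measure based on $\Theta$ and $a$, Theorem~\ref{discrete abelian} gives $\widehat{\mu}\in A_{\ell^q}(\Gamma)$ (since $a\in\ell^q(\Theta)$) but $\widehat{\mu}\notin A_{\ell^p}(\Gamma)$ (since $a\notin\ell^p(\Theta)$).

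Finally I would record the reverse inclusion. For a discrete group $\ell^p(\Gamma)\subseteq\ell^q(\Gamma)$ whenever $p\le q$, so every $\ell^p$-representation of $\Gamma$ is automatically an $\ell^q$-representation, and passing to coefficient functions gives $A_{\ell^p}(\Gamma)\subseteq A_{\ell^q}(\Gamma)$. Combined with the element $\widehat{\mu}$ constructed above, this shows $A_{\ell^p}(\Gamma)\subsetneq A_{\ell^q}(\Gamma)$ for all $2\le p<q<\infty$, hence the $A_{\ell^p}(\Gamma)$ are distinct for every $p\in[2,\infty)$. I expect essentially no real obstacle beyond invoking Theorem~\ref{discrete abelian}; the only care needed is the bookkeeping --- verifying that the chosen weights respect the magnitude constraints of the building blocks $q_\theta$ (including for $2$-torsion $\theta$) and keeping the Pontryagin duality straight, since $A_{\ell^p}(\Gamma)$ consists of functions on $\Gamma$ whereas $\mu$ is a measure on $G=\widehat{\Gamma}$.
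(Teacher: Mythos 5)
Your proposal is correct and is exactly the argument the paper intends: the corollary is stated as an immediate consequence of Theorem~\ref{discrete abelian}, obtained by choosing weights $a\in\ell^q(\Theta)\setminus\ell^p(\Theta)$ on an infinite dissociate set and applying the theorem to the resulting Riesz product. Your explicit choice $a(\theta_n)=\tfrac12 n^{-1/p}$ and the verification of the constraints (including the $2$-torsion case) and of the inclusion $A_{\ell^p}(\Gamma)\subseteq A_{\ell^q}(\Gamma)$ fill in the routine details correctly.
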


Our next step is show that $A_{L^p}(G)$ is distinct for each $2\leq p<\infty$ for another class of locally compact abelian groups $G$.

Suppose $\Gamma$ is a lattice in a locally compact abelian group $G$. Further, suppose that $v\in A(G)$ is a normalized positive definite function with the property that $\mathrm{supp}\,v\cap \Gamma=\{e\}$ and $(s+\mathrm{supp}\,v)\cap \Gamma$ is finite for every $s\in G$. Then the map $J=J_v$ from $B(\Gamma)$ into $B(G)$ defined by
$$ Ju(s)=\sum_{\xi\in \Gamma} u(\xi)v(s-\xi)$$
is a well defined isometry with the following properties: \cite[Theorem A.7.1]{gm}
\begin{itemize}
\item[(i)] $Ju\in P(G)$ if and only if $u\in P(\Gamma)$;
\item[(ii)] $Ju\in A(G)$ if and only if $u\in A(\Gamma)$.
\end{itemize}

\begin{lemma}\label{n>0}
Let $G=\R^n\times K$ for some compact abelian group $K$ and $n\geq 1$. Choose a normalized $v\in A(G)\cap P(G)$ so that $\mathrm{supp}\,v\subset [-1/3,1/3]^n\times K$ and suppose $\mu\in R(\Z^n)$ is based on $\Theta$ and $a$. Then $J_v\widehat{\mu}\in A_{L^p}(\R^n)$ if and only if $a\in\ell^p(\Theta)$.
\end{lemma}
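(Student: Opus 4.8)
The plan is to transport the discrete result, Theorem~\ref{discrete abelian}, across the isometry $J_v$ and then patch up the gap between $A(G)$ (handled by \cite[Theorem A.7.1]{gm}) and the $L^p$-versions. First I would set $\Gamma = \Z^n$, viewed as a lattice in $G = \R^n\times K$ via $\xi\mapsto(\xi,e)$, and check the hypotheses of the $J_v$ construction: since $\mathrm{supp}\,v\subset[-1/3,1/3]^n\times K$, the translates $s+\mathrm{supp}\,v$ meet $\Gamma$ in at most one point, so $\mathrm{supp}\,v\cap\Gamma=\{e\}$ and the local finiteness condition holds trivially. Thus $J_v\fn B(\Z^n)\to B(G)$ is a well-defined isometry with properties (i) and (ii) above. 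The function $\widehat\mu$ lives in $B(\Z^n)$ (indeed in $P(\Z^n)$ since $\mu\geq 0$), so $J_v\widehat\mu\in P(G)\subset B(G)$.

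The key point is to show $J_v\widehat\mu\in A_{L^p}(G)$ if and only if $\widehat\mu\in A_{\ell^p}(\Z^n)$, which combined with Theorem~\ref{discrete abelian} gives the claim. For the ``only if'' direction I would use Proposition~\ref{linear combination}: $A_{L^p}(G)$ is the closed span of $P(G)\cap L^p(G)$. Since $J_v$ is an isometry onto its range and the range behaves well with respect to positive-definiteness, it suffices to understand when $J_v\widehat\mu\in L^p(G)$. Here the crucial computation is that, because the translates $\xi+\mathrm{supp}\,v$ for $\xi\in\Gamma$ are pairwise disjoint, one has a clean pointwise/$L^p$-norm identity of the form
\[
\|J_v\widehat\mu\|_{L^p(G)}^p = \sum_{\xi\in\Gamma}|\widehat\mu(\xi)|^p\cdot\|v\|_{L^p(G)}^p
= \|\widehat\mu\|_{\ell^p(\Gamma)}^p\,\|v\|_{L^p(G)}^p,
\]
using that $v\in A(G)\subset L^2(G)$ and (for $p\geq 2$) $v\in L^p(G)$ as well since $v$ is bounded (being positive definite) and compactly supported, hence in every $L^p$. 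So $J_v\widehat\mu\in L^p(G)$ exactly when $a\in\ell^p(\Theta)$ by Theorem~\ref{discrete abelian}. Conversely, for the ``if'' direction, when $a\in\ell^p(\Theta)$ I would approximate: writing $\widehat\mu$ as a norm limit of linear combinations of positive-definite elements of $\ell^p(\Z^n)$ (the finite Riesz products $\widehat{P_\Phi}$ are finitely supported, hence in $\ell^p$, and positive definite), applying $J_v$ sends these to positive-definite elements of $L^p(G)$ by property (i) and the $L^p$-identity, and $J_v$ being an isometry preserves the convergence, so $J_v\widehat\mu\in A_{L^p}(G)$ by Proposition~\ref{linear combination}.

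The main obstacle I anticipate is the reverse inclusion in the ``only if'' direction: from $J_v\widehat\mu\in A_{L^p}(G)$ one must deduce $J_v\widehat\mu\in L^p(G)$, or more precisely extract that $a\in\ell^p(\Theta)$. Membership in $A_{L^p}(G)$ does not literally mean the function itself is in $L^p$, only that it is a coefficient of an $L^p$-representation; but since $G$ is abelian, $B(G)\cong M(\widehat G)$ and $A_{L^p}(G)$ should be realized concretely — one expects $A_{L^p}(G)$ to sit inside $L^p(G)$ (at least for $p\ge 2$; this is essentially the content of the abelian theory being developed in this section, cf.\ the discrete case where $A_{\ell^p}(\Gamma)\subset \ell^p(\Gamma)$ when $p \ge 2$ follows from interpolation/the fact that coefficients of $\ell^p$-representations are $\ell^p$). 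Granting that containment $A_{L^p}(G)\subset L^p(G)$ (for $p\in[2,\infty)$), the argument closes. I would also need to double-check the disjointness of supports genuinely gives the displayed $L^p$ identity rather than just a two-sided estimate; with $\mathrm{supp}\,v$ inside a box of side $2/3 < 1$ centered at origin and $\Gamma=\Z^n$, this is exact, so no constants are lost. Finally, the case $p=2$ is already Zygmund–Hewitt–Zuckerman combined with property (ii) of $J_v$, providing a consistency check and the base point from which Theorem~\ref{discrete abelian}'s Hölder argument was bootstrapped.
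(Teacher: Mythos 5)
Your ``if'' direction is essentially the paper's argument: the disjointness of the translates $\xi+\mathrm{supp}\,v$ gives the exact identity $\|J_vu\|_{L^p(G)}^p=\|u\|_{\ell^p(\Gamma)}^p\|v\|_{L^p(G)}^p$ for $u\in P(\Gamma)\cap\ell^p(\Gamma)$, so $J_v$ carries $P(\Gamma)\cap\ell^p(\Gamma)$ into $P(G)\cap L^p(G)$ and hence $A_{\ell^p}(\Gamma)$ into $A_{L^p}(G)$; combined with Theorem \ref{discrete abelian} (whose proof in fact shows $\widehat\mu\in\ell^p(\Gamma)$ directly when $a\in\ell^p(\Theta)$, so no approximation by the partial products $\widehat{P_\Phi}$ is needed --- and note those only converge weak*, not in norm) this settles one implication.

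The ``only if'' direction, however, has a genuine gap, and it is exactly at the point you flag and then wave away. The containment $A_{L^p}(G)\subset L^p(G)$ that you ``grant'' is false, already for $p=2$: $A_{L^2}(G)=A(G)$, and for noncompact abelian $G$ one has $A(G)=\{\hat f: f\in L^1(\widehat G)\}$, which contains $\hat f$ for $f\in L^1(\widehat G)\setminus L^2(\widehat G)$; by Plancherel such $\hat f\notin L^2(G)$. (The same failure occurs in the discrete case: $A(\Z)\not\subset\ell^2(\Z)$, since otherwise $L^1(\T)\subset L^2(\T)$.) Membership in $A_{L^p}(G)$ is closure of $\mathrm{span}(P(G)\cap L^p(G))$ in the $B(G)$-norm, and that closure escapes $L^p$. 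This is precisely why Theorem \ref{discrete abelian} itself does not argue via ``$A_{\ell^p}\subset\ell^p$'' but instead via the H\"older/Riesz-product duality trick, and the paper runs the same trick here: assuming $a\notin\ell^p(\Theta)$, choose $b$ via Lemma \ref{technical lemma}, form the companion Riesz product $\nu$ based on $\Theta$ and $c=\{(a(\theta)b(\theta))^{(p-2)/2}\}$ so that $\widehat\nu\in A_{\ell^q}(\Gamma)$ with $1/p+1/q=1/2$, push it over with $J_v$, and observe that $J_v\widehat\mu\cdot J_v\widehat\nu=J_{v^2}\widehat{\mu*\nu}$. If $J_v\widehat\mu$ were in $A_{L^p}(G)$, Proposition \ref{Holder} would force this product into $A(G)$, hence $\widehat{\mu*\nu}\in A(\Z^n)$ by property (ii) of the $J$-construction applied to $v^2$; but $\mu*\nu$ is the Riesz product based on $a\cdot c$ and $a\cdot c\notin\ell^2(\Theta)$ by the choice of $b$, contradicting the Zygmund--Hewitt--Zuckerman theorem. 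Without this (or some substitute for the false containment), your converse does not close.
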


\begin{proof}
We leave it as an exercise to the reader to check that if $\mathrm{supp}\,v\subset [-1/3,1/3]^n\times K$, then $\mathrm{supp}\,v\cap \Z^n=\{e\}$ and $(s+\mathrm{supp}\,v)\cap \Z^n$ is finite for every $s\in G$.

Let $u\in P(\Gamma)\cap \ell^p(\Gamma)$. For $(x_1,\ldots,x_n,k)\in \R^n\times K$, write $x_i=m_i+y_i$ for some $m_i\in\Z$ and $y_i\in [-1/2,1/2]$ ($1\leq i\leq n$). Then
\begin{eqnarray*}
 J_vu(x_1,\ldots,x_n,k) &=& \sum_{(\ell_1,\ldots,\ell_n)\in\Z^n} u(\ell_1,\ldots,\ell_n)v(m_1+y_1-\ell_1,\ldots,m_n+y_1-\ell_n,k) \\
 &=& u(m_1,\ldots, m_n)v(y_1,\ldots,y_n,k).
\end{eqnarray*}
For each $m_1,\ldots, m_n\in \Z^n$, define $M_{m_1,\ldots,m_n}=[m_1-1/2,m_1+1/2]\times\cdots\times [m_n-1/2,m_n+1/2]\times K$. Then 
\begin{eqnarray*}
&&\int_G |J_vu|^p\\
&=& \sum_{(m_1,\ldots,m_n)\in\Z^n}\int_{M_{m_1,\ldots,m_n}}|J_vu|^p \\
&=& \sum_{(m_1,\ldots,m_n)\in\Z^n}\int_{[-1/2,1/2]^n\times K}|u(m_1,\ldots,m_n)v(y_1,\ldots,y_n,k)|^p\,d(y_1,\ldots,y_n,k)\\
&=& \|u\|_p^p\int_{[-1/2,1/2]^n\times K}|v(y_1,\ldots,y_n,k)|^p\,d(y_1,\ldots,y_n,k) <\infty.
\end{eqnarray*}
Hence, $J_v u\in L^p(G)$. As $J$ is an isometry mapping $P(\Gamma)$ into $P(G)$ and $A_{\ell^p}(\Gamma)$ is the closed linear span of $P(\Gamma)\cap \ell^p(\Gamma)$, it follows that $J_v$ maps $A_{\ell^p}(\Gamma)$ into $A_{L^p}(G)$.

Let $\mu\in R(\Z^n)$ be based on $\Theta$ and $a$, and suppose that $a\not\in \ell^p(\Theta)$. Let $c$ be chosen as in the proof of Theorem \ref{discrete abelian} and $\nu\in R(\Z^n)$ be based on $\Theta$ and $c$. Then $\widehat{\nu}\in A_{\ell^q}(\Gamma)$ and, hence, $J_v \widehat{\nu}\in A_{L^q}(G)$ where $q$ satisfies $1/p+1/q=1/2$. For $m_1,\ldots,m_n\in\Z$, $y_1,\ldots,y_n\in [-1/2,1/2]$ and $k\in K$,
\begin{eqnarray*}
&& J_v\widehat{\mu}(m_1+y_1,\ldots,m_n+y_n,k)J_v\widehat{\nu}(m_1+y_1,\ldots,m_n+y_n,k)\cdot \\
&=& \widehat{\mu}(m_1,\ldots,m_n)\widehat\nu(m_1,\ldots, m_n)v(y_1,\ldots,y_n,k)^2 \\
&=&J_{v^2} \widehat{\mu *\nu}.
\end{eqnarray*}
Since $v^2$ is a positive definite function with support contained in $[-1/3,1/3]^n\times K$, $J_v\widehat{\mu*\nu}\in A(G)$ if and only if $\widehat{\mu*\nu}\in A(G)$. But $\mu*\nu$ is the element of $R(G)$ based on $\Theta$ and $a\cdot c$ and, as in the proof of Theorem \ref{discrete abelian}, $a\cdot c\not\in \ell^2(\Theta)$. So $\widehat{\mu*\nu}\not\in A(\Gamma)$ and, hence, $J_v\widehat{\mu}\cdot J_v \widehat{\nu}$ is not in $A(G)$. It follows that $J_v\widehat{\mu}\not\in A_{L^p}(G)$.
\end{proof}

\begin{corollary}
$A_{L^p}(G)$ is distinct for every $p\in [2,\infty)$ when $G=\R^n\times K$ where $K$ is some compact abelian group and $n\geq 1$.
\end{corollary}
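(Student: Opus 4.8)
The plan is to read the Corollary off directly from Lemma~\ref{n>0}. Fix $2 \le p < p' < \infty$; it suffices to exhibit a single function belonging to $A_{L^{p'}}(G)$ but not to $A_{L^p}(G)$, since this already forces $A_{L^p}(G) \ne A_{L^{p'}}(G)$.

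First I would manufacture the auxiliary function needed to apply Lemma~\ref{n>0}. Choose a nonzero $\phi \in L^2(\R^n)$ with $\mathrm{supp}\,\phi \subset [-1/6,1/6]^n$ and set $v_1 = \|\phi\|_2^{-2}\,\phi * \widetilde\phi$, where $\widetilde\phi(x) = \overline{\phi(-x)}$; then $v_1 \in A(\R^n) \cap P(\R^n)$ is normalized and $\mathrm{supp}\,v_1 \subset [-1/3,1/3]^n$. Since $K$ is compact, the constant function $1_K$ lies in $A(K) \cap P(K)$, so $v := v_1 \otimes 1_K$ is a normalized element of $A(G) \cap P(G)$ with $\mathrm{supp}\,v \subset [-1/3,1/3]^n \times K$, precisely as Lemma~\ref{n>0} requires.

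Next I would produce the Riesz product. As $\Z^n$ is an infinite discrete abelian group it contains an infinite dissociate set $\Theta = \{\theta_k\}_{k \ge 1}$; since $\Z^n$ is torsion-free, no $\theta \in \Theta$ satisfies $\theta^2 = e$. Put $a(\theta_k) = \min\{1/2,\,k^{-1/p}\}$, so that $\|a\|_\infty \le 1/2$ (hence $a$ respects the constraints of the Riesz product construction), $\sum_k a(\theta_k)^{p'} < \infty$, and $\sum_k a(\theta_k)^{p} = \sum_k k^{-1} = \infty$; that is, $a \in \ell^{p'}(\Theta) \setminus \ell^p(\Theta)$. Let $\mu \in R(\Z^n)$ be based on $\Theta$ and $a$. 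By Lemma~\ref{n>0}, $J_v\widehat\mu \in A_{L^{p'}}(G)$ because $a \in \ell^{p'}(\Theta)$, while $J_v\widehat\mu \notin A_{L^p}(G)$ because $a \notin \ell^p(\Theta)$. Hence $A_{L^p}(G) \ne A_{L^{p'}}(G)$, and as $p < p'$ were arbitrary the spaces $A_{L^p}(G)$, $p \in [2,\infty)$, are pairwise distinct. (One may moreover observe that $|f|^{p'} \le \|f\|_\infty^{\,p'-p}|f|^p$ for $f \in L^p(G) \cap C_b(G)$, so every $L^p$-representation is an $L^{p'}$-representation and $A_{L^p}(G) \subseteq A_{L^{p'}}(G)$; thus the inclusions are in fact strict.)

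Since all of the substance has already been carried out in Lemma~\ref{n>0}, Theorem~\ref{discrete abelian}, and the Hewitt--Zuckerman theorem, I expect no genuine obstacle here: the Corollary is essentially bookkeeping. The only point requiring a little care is verifying that the auxiliary $v$ with the prescribed small support really exists as a normalized positive definite function, which the convolution-square construction above settles.
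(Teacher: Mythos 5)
Your proposal is correct and follows essentially the same route as the paper: the paper's proof also reduces the corollary to exhibiting a normalized positive definite $v\in A(G)$ supported in $[-1/3,1/3]^n\times K$, which it builds as $\omega\times\cdots\times\omega\times 1_K$ with $\omega=\chi_{[-1/6,1/6]}*\chi_{[-1/6,1/6]}$ --- a special case of your convolution-square construction $\phi*\widetilde\phi$. Your explicit choice of dissociate set and of $a\in\ell^{p'}(\Theta)\setminus\ell^p(\Theta)$ merely spells out the application of Lemma~\ref{n>0} that the paper leaves implicit.
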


\begin{proof}
It suffices to check that there is a nonzero positive definite function $v$ whose support is contained in $[-1/3,1/3]^n\times K$. Observe that
$$ \omega(x):=\chi_{[-1/6,1/6]}*\chi_{[-1/6,1/6]}(x)=\left\{\begin{array}{c l}
1-3|x|, & |x|\leq 1/3\\
0, & \tn{otherwise}
\end{array}\right.$$
is a positive definite function on $\R$ with support contained in $[-1/3,1/3]$. Taking $v=\omega\times\cdots\times\omega\times 1_K$ clearly does the trick.
\end{proof}

%\begin{proposition}
%Suppose $H$ is a subgroup of a locally compact group $G$, and $\mu\in M(H)$ a probability measure. For $\varphi\in B(G)$ define $\widetilde\varphi\fn G\to \C$ by
%$$ \widetilde\varphi(s)=\int\varphi(s\xi)d\mu(\xi). $$
%Then $\varphi\mapsto\widetilde\varphi$ is a contractive map on $B(G)$.
%\end{proposition}
%
%\begin{proof}
%Let $\varphi\in B(G)$ have norm 1, fix $\epsilon>0$, and choose a compact subset $K\subset H$ so that $\mu(K)>1-\epsilon$. Next, choose a symmetric open neighbourhood $U$ of the identity so that $s^{-1t}\in U$ implies that $|\varphi(s)-\varphi(t)|<\epsilon$ (this is possible since $\varphi$ is uniformly continuous).Find a disjoint cover $\{A_1,\ldots, A_{n-1}\}$ for $K$ such that $s^{-1}t\in U$ for every $s,t\in A_i$, $i=1,\ldots, n-1$. Define $A_n= H\backslash\bigcup_{i=1}^{n-1}A_i$. Then for any choice of $\xi_i\in A_i$,
%$$\left|\widetilde\varphi(s)-\sum_{i=1}^n\mu(A_i)\varphi(s\xi_i)\right|<2\epsilon$$
%for every $s\in G$. Note that $\sum_{i=1}^n\mu(A_i)\varphi(s\xi_i)=\sum_{i=1}^n\mu(A_i)\varphi_{\xi_i}(s)$ and $\sum_{i=1}^n\mu(A_i)\varphi_{\xi_i}$ is an element of $B(G)$ with norm at most 1. Hence, it follows from Raikov's theorem that $\widetilde\varphi\in B(G)$ with $\|\widetilde\varphi\|\leq 1$.
%\end{proof}

We now prove one last lemma before we show that $A_{L^p}(G)$ is distinct for each $p\in [2,\infty)$ when $G$ is any noncompact locally compact abelian group.

%\begin{lemma}
%Suppose $K$ is a normal compact subgroup of a locally compact group $G$. For every $\varphi\in B(G)$, define $\widetilde\varphi\fn G\to \C$ by $\widetilde\varphi(s)=\int \varphi(s\xi)dm_K(\xi).$ Then $\varphi\mapsto\widetilde\varphi$ is a contractive map on $B(G)$ with the property that $\widetilde\varphi\in P(G)$ whenever $\varphi\in P(G)$.
%\end{lemma}
%
%\begin{proof}
%Let $\varpi$ be the universal representation of $G$. Then $\widetilde\varphi=\varpi(m_K)\cdot\varphi\cdot \varpi(m_K)$.
%\end{proof}
%
%\begin{theorem}[Eymard]
%Let $\pi\fn G_1\to G$ be a continuous homomorphism with the property that $\sigma(G_1)$ is dense in $G$. For functions $f$ on $G$, we will let $j(f):=f\circ\sigma$. Then $j$ is an isometric map from $B(G)$ to $B(G_1)$ with the property that
%$$ j(B(G))=B(G_1)\cap j(C(G)). $$
%\end{theorem}

\begin{lemma}\label{n=0}
Suppose $K$ is a compact subgroup of a locally compact group $G$. Then
$$A_{L^p}(G:K):=\{u\in A_{L^p}(G) : u(sk)=u(s)\tn{ for all } s\in G, k\in K\}$$
is isometrically isomorphic to $A_{L^p}(G/K)$.
\end{lemma}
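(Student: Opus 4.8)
The plan is to exhibit the isometric isomorphism by the obvious map: if $q\fn G\to G/K$ is the quotient map, then pullback $u\mapsto u\circ q$ should carry $A_{L^p}(G/K)$ onto $A_{L^p}(G:K)$. First I would recall the classical fact (Eymard) that $u\mapsto u\circ q$ is an isometric isomorphism from $B(G/K)$ onto the subalgebra $B(G:K)=\{u\in B(G): u(sk)=u(s)\ \forall s\in G, k\in K\}$ of $K$-right-invariant elements of $B(G)$, and likewise restricts to an isometric isomorphism $A(G/K)\cong A(G:K)$. Since $A_{L^p}(G:K)=A_{L^p}(G)\cap B(G:K)$ sits inside $B(G:K)$, it then suffices to identify which elements of $B(G/K)$ pull back into $A_{L^p}(G)$, i.e.\ to show that $u\circ q\in A_{L^p}(G)$ if and only if $u\in A_{L^p}(G/K)$.

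The key step is a correspondence between $L^p$-representations of $G/K$ and $K$-invariant $L^p$-representations of $G$. Given a representation $\sigma$ of $G/K$, the inflated representation $\tilde\sigma:=\sigma\circ q$ of $G$ has the same coefficient functions (composed with $q$), and since $G$ and $G/K$ are related by an integration-over-$K$ formula for Haar measures (using that $K$ is compact, so the modular functions behave and integration over cosets makes sense), a coefficient function $\tilde\sigma_{x,x}=\sigma_{x,x}\circ q$ lies in $L^p(G)$ precisely when $\sigma_{x,x}\in L^p(G/K)$: indeed by Weil's formula $\int_G |f\circ q|^p\,ds = \int_{G/K}|f|^p\,d(sK)$ (normalizing Haar measure on the compact $K$ to have mass one), so $\tilde\sigma$ is an $L^p$-representation of $G$ iff $\sigma$ is an $L^p$-representation of $G/K$. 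Conversely, any representation $\pi$ of $G$ whose coefficient functions are $K$-right-invariant factors through $q$ (a $K$-right-invariant unitary representation of $G$ is the inflation of a representation of $G/K$, because $\pi(k)$ acts trivially on the relevant cyclic subspaces), and the same Weil formula shows its $L^p$-ness transfers. Using Proposition~\ref{linear combination} (both $A_{L^p}(G)$ and $A_{L^p}(G/K)$ are closed linear spans of their positive-definite $L^p$-elements), it is enough to match up $K$-right-invariant elements of $P(G)\cap L^p(G)$ with elements of $P(G/K)\cap L^p(G/K)$, which the preceding remarks do: $u\in P(G/K)\cap L^p(G/K)\iff u\circ q\in P(G)\cap L^p(G)$ and $u\circ q$ is automatically $K$-right-invariant, while any $K$-right-invariant element of $P(G)\cap L^p(G)$ is of this form.

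Assembling: the pullback map $\Phi\fn u\mapsto u\circ q$ is an isometry $B(G/K)\to B(G)$ with image $B(G:K)$; it sends $P(G/K)\cap L^p(G/K)$ onto $P(G)\cap L^p(G)\cap B(G:K)$; taking closed linear spans and invoking Proposition~\ref{linear combination} gives $\Phi(A_{L^p}(G/K)) = A_{L^p}(G)\cap B(G:K) = A_{L^p}(G:K)$, and $\Phi$ is an algebra homomorphism since pullback respects pointwise multiplication. Hence $\Phi$ restricts to an isometric algebra isomorphism $A_{L^p}(G/K)\cong A_{L^p}(G:K)$.

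The main obstacle I anticipate is the bookkeeping around Haar measures and the Weil integration formula — making sure the identity $\int_G|f\circ q|^p = \int_{G/K}|f|^p$ holds with the normalizations in play (this is where compactness of $K$ is essential, so that a genuine $G$-invariant measure on $G/K$ exists and the fibre integral over $K$ is finite) — together with the (standard but needing a clean statement) fact that a unitary representation of $G$ with $K$-right-invariant cyclic vectors is inflated from $G/K$. Neither is deep, but both must be stated carefully for the equivalence "$u\circ q$ is an $L^p$-coefficient of $G$ $\iff$ $u$ is an $L^p$-coefficient of $G/K$" to be airtight.
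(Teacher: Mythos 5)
Your overall strategy (pull back along $q\colon G\to G/K$, use Weil's integration formula with $m_K$ normalized, and match up positive definite $L^p$-functions) is close in spirit to the paper's, which works at the level of representations by compressing an $L^p$-representation $\pi$ of $G$ with the projection $p_K=\pi(m_K)$ onto the $K$-fixed vectors and inflating in the other direction. But as written your argument has a genuine gap at the final "taking closed linear spans" step. What your matching of positive definite functions gives you is
$\Phi\bigl(A_{L^p}(G/K)\bigr)=\overline{\mathrm{span}}\bigl(P(G)\cap L^p(G)\cap B(G:K)\bigr)$,
and the inclusion of this set into $A_{L^p}(G)\cap B(G:K)=A_{L^p}(G:K)$ is clear. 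The reverse inclusion is not: an element $u$ of $A_{L^p}(G)$ that happens to be $K$-invariant is, by Proposition \ref{linear combination}, a limit of linear combinations of elements of $P(G)\cap L^p(G)$, but nothing you have said forces those approximants to be $K$-invariant, so you cannot conclude that $u$ lies in the closed span of the \emph{invariant} positive definite $L^p$-functions. This is precisely the point the projection $p_K$ (equivalently, averaging) is needed for, and it is the crux of the lemma rather than bookkeeping.

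The fix is short: the map $E(u)=m_K*u*m_K$ is a contractive projection of $B(G)$ onto $B(G:K)$ (it is predual to compression by the central projection $p_K$, using that $K$ is compact and normal so that $m_K$ is a central idempotent probability measure); it carries $P(G)$ into $P(G)$ and $L^p(G)$ into $L^p(G)$, hence carries $A_{L^p}(G)$ into $\overline{\mathrm{span}}\bigl(P(G)\cap L^p(G)\cap B(G:K)\bigr)$, and it fixes every $u\in A_{L^p}(G:K)$. Applying $E$ to an approximating sequence for such a $u$ closes the gap. (Alternatively, follow the paper: write $u=\pi_{x,y}$ for a single $L^p$-representation $\pi$ with $\|u\|=\|x\|\,\|y\|$, observe that $K$-invariance gives $u=\pi_{p_Kx,p_Ky}$, and note that $\pi$ restricted to $p_K\Hi_\pi$ is trivial on $K$ and descends to an $L^p$-representation of $G/K$; this also gives the isometry directly.) One further small imprecision: a representation of $G$ whose coefficient functions are $K$-invariant need not itself be inflated from $G/K$ --- only its restriction to the range of $p_K$ is --- though your claim is correct for the GNS representation of a $K$-bi-invariant positive definite function when $K$ is normal, which is the case you actually use.
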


\begin{proof}
Let $m_K$ denote the normalized Haar measure for $K$ and note that $m_K$ is a central idempotent measure. Denote the universal representation of $G$ by $\varpi$ and define $p_K=\varpi(m_K)$. Observe that if $\pi$ is a representation of $G$, then $p_K\pi$ is constant on cosets of $K$ and, hence, defines a representation $\pi_K\fn G/K\to \mc U(\Hi_\pi)$ by $\pi_K(sK)=p_K\pi(s)$ for $s\in G$.

Suppose $\pi$ is an $L^p$-representation of $G$ and $\Hi_0$ is a dense subspace of $\Hi_\pi$ so that $\pi_{x,x}\in L^p(G)$ for all $x\in\Hi_0$. Let $q\fn G\to G/K$ be the canonical quotient map. Then
$$ (\pi_{K})_{p_Kx,p_Kx}\circ q= \lla p_K\pi(\cdot)x,x\rra= m_K* \pi_{x,x} \in L^p(G)$$
for all $x\in \Hi_0$. Since $m_K*L^p(G)\cong L^p(G/K)$, it follows that $\pi_K$ is an application of $G/K$.

Conversely, suppose that $\widetilde\pi$ is an $L^p$-representation of $G/K$. Then Weyl's integral formula implies that $\widetilde\pi\circ q$ is an $L^p$-representation of $G$. Furthermore, $m_K*(\widetilde\pi\circ q)_{x,y}=(\widetilde\pi\circ q)_{x,y}$ for all $x,y\in \Hi_{\widetilde \pi}$.
\end{proof}

We thank Nico Spronk for pointing out this previous lemma to us, which has allowed for cleaner arguments throughout the paper.

\begin{theorem}\label{abelian}
Let $G$ be a noncompact locally compact abelian group. Then $A_{L^p}(G)$ is distinct for every $p\in [2,\infty)$.
\end{theorem}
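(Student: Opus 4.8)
The plan is to reduce the general noncompact abelian case to the two cases already handled, namely $G$ discrete (Corollary after Theorem \ref{discrete abelian}) and $G = \R^n \times K$ with $n \geq 1$ (Corollary after Lemma \ref{n>0}), using the structure theory of locally compact abelian groups. By the principal structure theorem, $G$ has an open subgroup of the form $\R^n \times K$ where $K$ is a compact abelian group and $n \geq 0$. Proposition \ref{open} tells us that $A_{L^p}(H) = A_{L^p}(G)|_H$ for any open subgroup $H$, so if the restrictions $A_{L^p}(H)$ are pairwise distinct, then so are the algebras $A_{L^p}(G)$; this lets me replace $G$ by its open subgroup $\R^n \times K$. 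So the first step is to invoke the structure theorem and reduce to $G = \R^n \times K$.

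Next I split into two cases. If $n \geq 1$, the result is exactly the Corollary following Lemma \ref{n>0}, so there is nothing to prove. If $n = 0$, then $G = K$ is compact, contradicting the hypothesis that $G$ is noncompact --- \emph{unless} the open subgroup $\R^n \times K$ is proper, i.e. $G$ itself is noncompact but contains a compact open subgroup $K$. In that case $G/K$ is a discrete group, and it is infinite precisely because $G$ is noncompact. Here I would like to apply the discrete case. The bridge is Lemma \ref{n=0}: $A_{L^p}(G : K)$ is isometrically isomorphic to $A_{L^p}(G/K)$, and $A_{L^p}(G/K)$ is distinct for every $p \in [2,\infty)$ by the infinite discrete abelian case. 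Since $A_{L^p}(G : K)$ is, by definition, a subspace of $A_{L^p}(G)$ (consisting of the right-$K$-invariant elements), distinctness of the subspaces $A_{L^p}(G : K)$ forces distinctness of $A_{L^p}(G)$. So it remains to observe that $A_{L^p}(G : K)$ being distinct follows from $A_{L^p}(G/K)$ being distinct via the isometric isomorphism of Lemma \ref{n=0}.

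I would present it roughly as follows: by the structure theorem, pick an open subgroup $H \cong \R^n \times K$ of $G$ with $K$ compact; since $G$ is noncompact, either $n \geq 1$, or $n = 0$ and $H = K$ is a compact open subgroup with $G/K$ infinite discrete. In the first case, Proposition \ref{open} reduces us to $H = \R^n \times K$ with $n \geq 1$, where the result holds by the corollary to Lemma \ref{n>0}. In the second case, combine Lemma \ref{n=0} (identifying $A_{L^p}(K\text{-invariant part of }G)$ with $A_{L^p}(G/K)$) with the infinite discrete abelian case to see the $K$-invariant subspaces of $A_{L^p}(G)$ are already pairwise distinct.

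The main obstacle, such as it is, is bookkeeping rather than mathematics: making sure the structure theorem is applied in the form "$G$ has an open subgroup $\R^n \times K$ with $K$ compact" (this is the standard consequence of the principal structure theorem for LCA groups, valid without any connectedness or compact-generation assumption once one allows the subgroup to be open and proper), and correctly tracking that distinctness of a nested family of subspaces inside $A_{L^p}(G)$ propagates up to distinctness of $A_{L^p}(G)$ itself. One small point to verify is that in the $n=0$ case the open subgroup really can be taken to be compact (equivalently, that a noncompact LCA group with no copy of $\R$ and with a compact open subgroup has that subgroup doing the job); this is immediate from the structure theorem applied to a compactly generated open subgroup. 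No genuinely hard estimate is needed here --- all the analytic work (the Riesz product computations, Lemma \ref{technical lemma}, the H\"older-type Proposition \ref{Holder}) has already been carried out in the discrete and $\R^n \times K$ cases.
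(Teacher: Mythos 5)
Your proposal is correct and follows essentially the same route as the paper: reduce via the structure theorem and Proposition \ref{open} to an open subgroup of the form $\R^n\times K$, settle $n\geq 1$ by the corollary to Lemma \ref{n>0}, and settle $n=0$ by combining Lemma \ref{n=0} with the infinite discrete abelian case applied to $G/K$. If anything, your handling of the $n=0$ case is spelled out more carefully than the paper's own one-line version of that step.
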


\begin{proof}
By the structure theorem for locally compact abelian groups, $G$ has an open subgroup of the form $\R^n\times K$ where $n\geq 0$ and $K$ is compact. If $n>0$, then the result follows from Lemma \ref{n>0}. Otherwise, it follows from Lemma \ref{n=0} that $A_{L^p}(\R^n\times K)$ is distinct for every $p\in [2,\infty)$ and, hence, $A_{L^p}(G)$ is distinct for every $p\in [2,\infty)$ by Proposition \ref{open}.
\end{proof}

We finish this section by showing that this same phenomenon which occurs for abelian groups also occurs in almost connected SIN groups.

\begin{theorem}
Let $G$ be a noncompact almost connected SIN group. Then $A_{L^p}(G)$ is distinct for every $p\in [2,\infty)$.
\end{theorem}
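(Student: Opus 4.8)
The plan is to reduce the theorem to the abelian case already established in Theorem \ref{abelian} by exploiting the SIN structure together with the earlier reduction lemmas. Recall that a SIN group has a fundamental system of neighborhoods of the identity that are invariant under conjugation, and that almost connected SIN groups are precisely the groups of the form $\R^n \times L$ where $L$ has a compact open normal subgroup, or more usefully for us, a maximal compact normal subgroup $K$ such that $G/K$ is a Lie group of the form $\R^n \times (\tn{compact})$; in fact the structure theory (Grosser--Moskowitz) says an almost connected SIN group $G$ has a compact normal subgroup $K$ with $G/K \cong \R^n \times D$ for some discrete abelian... but we need to be careful: $G/K$ need only be a SIN Lie group, hence of the form $\R^n \times (\tn{compact Lie})$ when almost connected. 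The key point I want is that $G$ has a compact normal subgroup $K$ such that $G/K$ is a \emph{noncompact abelian} Lie group, i.e.\ contains $\R$.

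First I would invoke the structure theorem for almost connected SIN groups to produce a compact normal subgroup $K \trianglelefteq G$ with $G/K$ a Lie group; since SIN is inherited by quotients and connected SIN Lie groups are of the form $\R^n \times (\tn{compact semisimple or torus})$, and since $G$ is noncompact, after possibly enlarging $K$ we get $G/K \cong \R^n \times C$ with $C$ compact and $n \geq 1$. (If the almost connected SIN group had no such $\R$ factor it would be compact, contradicting our hypothesis.) Then Lemma \ref{n=0} gives an isometric isomorphism $A_{L^p}(G:K) \cong A_{L^p}(G/K)$, and since $A_{L^p}(G/K) = A_{L^p}(\R^n \times C)$ is distinct for every $p \in [2,\infty)$ by the corollary following Lemma \ref{n>0} (using $n \geq 1$), the subspaces $A_{L^p}(G:K)$ of $B(G)$ are distinct for every $p$, hence so are the $A_{L^p}(G)$. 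Thus the whole argument is: structure theorem $\Rightarrow$ a good compact normal quotient $\Rightarrow$ Lemma \ref{n=0} $\Rightarrow$ reduce to $\R^n \times K$ case.

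The step I expect to be the main obstacle is pinning down exactly the right structural statement: one needs that a noncompact almost connected SIN group admits a compact \emph{normal} subgroup $K$ with $G/K$ of the form $\R^n \times (\tn{compact})$ and $n \geq 1$. This follows from the known fact (see Grosser--Moskowitz, \emph{Compactness conditions in topological groups}) that an almost connected SIN group $G$ is the projective limit of Lie groups which are SIN, that a connected SIN Lie group is a direct product of a vector group and a compact group, and that $G$ has a compact normal subgroup $K$ with $G/K$ a Lie group; combining these and discarding the compact-group direction into $K$ yields $G/K \cong \R^n \times (\tn{compact})$. Noncompactness of $G$ forces $n \geq 1$ (a group of the form (compact-by-$\R^0$) projective-limit would be compact). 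Once this structural input is in hand, the rest is a direct appeal to Lemma \ref{n=0} and the corollary to Lemma \ref{n>0}, exactly as in the final step of the proof of Theorem \ref{abelian}; one should also note that $A_{L^p}(G:K)$ being pairwise distinct immediately forces the ambient $A_{L^p}(G)$ to be pairwise distinct since the former sit inside the latter and any element distinguishing two of the subspaces $A_{L^p}(G:K)$ a fortiori distinguishes the two $A_{L^p}(G)$.
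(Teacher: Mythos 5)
Your proposal is correct in substance and follows essentially the same route as the paper: invoke the structure theory of almost connected SIN groups to reduce to a group of the form $\R^n\times(\tn{compact})$ with $n\geq 1$, then use Lemma \ref{n=0} to pass to a quotient where the abelian results apply. The paper phrases the structural input slightly differently --- it takes an open finite-index subgroup of the form $\R^n\times K$ with $K$ compact (reducing to that subgroup via Proposition \ref{open}) and then applies Lemma \ref{n=0} with the compact normal subgroup $K$ so that the quotient is exactly $\R^n$ --- whereas you take a compact normal subgroup of $G$ with quotient $\R^n\times C$. The one imprecision in your write-up is the citation of the corollary following Lemma \ref{n>0} for $\R^n\times C$: that corollary (and Lemma \ref{n>0} itself) is stated only for a compact \emph{abelian} factor, while the compact factor produced by the structure theorem need not be abelian (e.g.\ $G=\R\times SU(2)$ is a noncompact connected SIN group). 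The fix is immediate and already in your toolkit: enlarge the compact normal subgroup so that the quotient is $\R^n$ itself, or apply Lemma \ref{n=0} once more to mod out $C$ --- which is precisely what the paper does by quotienting $\R^n\times K$ by $K$. Your final observation that distinctness of the subspaces $A_{L^p}(G:K)$ forces distinctness of the spaces $A_{L^p}(G)$ is correct, since a $K$-invariant function lying in $A_{L^p}(G)$ automatically lies in $A_{L^p}(G:K)$.
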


\begin{proof}
By the structure theorem for almost connected SIN groups, $G$ contains an open subgroup of finite index which is of the form $\R^n\times K$ for some $n\geq 0$ and compact group $K$ \cite{palm}. Then, since $G$  is noncompact, it is necessarily the case that $n\geq 1$. So it suffices to check this for groups $G$ of the form $\R^n\times K$ for some $n\geq 1$. As this follows from Lemma \ref{n=0}, we conclude that $A_{L^p}(G)$ is distinct for all $p\in [2,\infty)$.
\end{proof}

\section{The structure of $L^p$-Fourier(-Stieltjes) algebras}\label{structure}

In this section investigate the structural properties of the $L^p$-Fourier and Fourier-Stieltjes algebras with an emphasis on the former. Similar to the Fourier algebra, we find that the $L^p$-Fourier algebra completely determines the group. However, armed with our knowledge of these spaces in the cases when $G$ is either an abelian locally compact group or a discrete group containing a copy of a noncommutative free group, we observe that many nice properties which hold for Fourier algebras fail for $L^p$-Fourier algebras. We begin this section by determining the spectrum of the $L^p$-Fourier algebras.

\begin{prop}
Let $G$ be a locally compact group. Then the spectrum of $A_{L^p}(G)$ is $G$, where we identify elements of $G$ with their point evaluations.
\end{prop}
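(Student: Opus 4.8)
The plan is to show that the spectrum (i.e., the set of nonzero multiplicative linear functionals, equivalently the maximal ideal space) of the commutative Banach algebra $A_{L^p}(G)$ coincides with $G$ under point evaluations. First I would note that each $s\in G$ does give a character: since $A_{L^p}(G)\subset B(G)\subset C_b(G)$ and $A_{L^p}(G)$ is closed under pointwise multiplication (it is an ideal in $B(G)$ by the remarks following Proposition \ref{linear combination}, or by Proposition \ref{Holder}), the evaluation $u\mapsto u(s)$ is a nonzero multiplicative functional, and distinct points of $G$ give distinct characters because $A_{L^p}(G)\supset A(G)$ separates points of $G$. So the content is the reverse inclusion: every character of $A_{L^p}(G)$ is a point evaluation.

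The key step is to exploit that $A(G)\subset A_{L^p}(G)$ is a \emph{dense ideal}. Indeed $A(G)$ is an ideal in $B(G)$, hence in $A_{L^p}(G)$; and $A(G)$ is weak*-dense in $B(G)$, but more relevantly one should check $A(G)$ is norm-dense in $A_{L^p}(G)$ — this follows because, by Proposition \ref{linear combination}, $A_{L^p}(G)$ is the closed linear span of $P(G)\cap L^p(G)$, and each $u\in P(G)\cap L^p(G)$ can be approximated in the $B(G)$-norm by elements of $A(G)$: multiply $u$ by a net $(e_i)$ drawn from $P(G)\cap C_c(G)$ (a bounded approximate identity argument for $A(G)$ acting on $u$, using that $u\in A_\pi$ for the GNS representation $\pi$ of $u$ and that $e_i u \to u$ in $\|\cdot\|_{A_\pi}$), giving $e_i u\in P(G)\cap C_c(G)\subset A(G)$. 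Once $A(G)$ is a dense ideal, a standard Banach-algebra fact applies: if $J$ is a dense two-sided ideal in a commutative Banach algebra $\mathcal A$, then restriction $\varphi\mapsto\varphi|_J$ is a bijection between the characters of $\mathcal A$ and the characters of $J$ (a character of $\mathcal A$ cannot annihilate a dense ideal since $\varphi(ab)=\varphi(a)\varphi(b)$ forces $\varphi(J)\neq 0$, and it extends uniquely by continuity and density). Therefore the spectrum of $A_{L^p}(G)$ is in natural bijection with the spectrum of $A(G)$, which by Eymard's theorem \cite{e} is exactly $G$ with point evaluations; and one checks the bijection is the identity on $G$-points, so the spectrum of $A_{L^p}(G)$ is $G$.

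Concretely I would organize it as: (1) point evaluations are distinct characters; (2) $A(G)$ is a norm-dense ideal of $A_{L^p}(G)$ — here is where the approximate-identity computation lives; (3) invoke the dense-ideal/character-restriction lemma to identify $\mathrm{Spec}(A_{L^p}(G))$ with $\mathrm{Spec}(A(G))$; (4) apply Eymard's identification $\mathrm{Spec}(A(G))=G$ and trace through that the correspondence sends a character of $A_{L^p}(G)$ to its restriction, which is evaluation at some $s\in G$, whence the character itself is evaluation at $s$ on all of $A_{L^p}(G)$ by density.

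The main obstacle I anticipate is step (2): verifying that $A(G)$ is norm-dense in $A_{L^p}(G)$, not merely weak*-dense. The weak*-density is immediate from the definition of $B_{L^p}(G)$ but gives nothing about norm closure. The honest route is the approximate-identity argument on positive-definite generators: for $u\in P(G)\cap L^p(G)$ with GNS representation $\pi=\pi_u$ on $\mathcal H_u$ with cyclic vector $\xi$, one has $u=(\pi_u)_{\xi,\xi}\in A_{\pi_u}$, and since $\lambda$ absorbs (Fell) one can approximate the identity of $VN_{\pi_u}$ appropriately so that $e\cdot u\to u$ in $\|\cdot\|_{B(G)}$ with $e\in A(G)$; because $A(G)$ is an ideil in $B(G)$, $e\cdot u\in A(G)$. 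The delicate point is producing such $e$ with the right support/integrability so that $e\cdot u$ genuinely lands in $A(G)$ while converging in $B(G)$-norm — but this is exactly the kind of standard manipulation with Eymard's description of $A(G)$ as $\overline{P(G)\cap C_c(G)}$ that makes the whole argument go through.
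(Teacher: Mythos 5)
Your step (2) is false, and the proof collapses there. By definition $A(G)=A_\lambda$ is a \emph{norm-closed} subspace of $B(G)$, and $A_{L^p}(G)$ carries the norm of $B(G)$; so if $A(G)$ were norm-dense in $A_{L^p}(G)$ you would conclude $A(G)=A_{L^p}(G)$. But the paper proves (Theorem \ref{abelian}) that $A_{L^p}(G)\neq A(G)$ for every noncompact abelian $G$ and every $p>2$, so norm-density fails precisely in the cases where the proposition has any content beyond Eymard's theorem. The approximate-identity computation you sketch cannot be repaired: since $A(G)$ is an ideal of $B(G)$, for any $u\in B(G)$ one has $A(G)\cdot u\subset A(G)$ and hence $\overline{A(G)\cdot u}^{\|\cdot\|_{B(G)}}\subset A(G)$; thus $e_iu\to u$ in $\|\cdot\|_{B(G)}$ already forces $u\in A(G)$, and for $u\in P(G)\cap L^p(G)$ with $u\notin A(G)$ no such net exists. (You do get $e_iu\to u$ weak*, but as you yourself note that is useless for restricting characters.)

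The strategy of passing characters down to $A(G)$ is still the right one, but you must drop density and instead rule out the possibility that a character annihilates the closed ideal $A(G)$. If $\chi$ is a character of $A_{L^p}(G)$ with $\chi|_{A(G)}\neq 0$, then $\chi|_{A(G)}$ is a character of $A(G)$, hence evaluation at some $s\in G$ by Eymard, and then for $v\in A(G)$ with $v(s)\neq 0$ the identity $\chi(u)v(s)=\chi(uv)=(uv)(s)=u(s)v(s)$ (valid because $uv\in A(G)$) gives $\chi(u)=u(s)$ for all $u\in A_{L^p}(G)$. The missing ingredient is exactly why $\chi|_{A(G)}\neq 0$, and this is where the paper's proof goes: choosing $n$ with $p/n\leq 2$, Proposition \ref{Holder} gives $u^n\in A_{L^{p/n}}(G)=A(G)$ for every $u\in A_{L^p}(G)$, so $\chi|_{A(G)}=0$ would force $\chi(u)^n=\chi(u^n)=0$ for all $u$, i.e.\ $\chi=0$. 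The paper then finishes in one line by comparing $\chi(u^n)=u(s)^n$ with $\chi(u^{n+1})=u(s)^{n+1}$. So the idea you are missing is the power trick $u\mapsto u^n\in A(G)$, not a density statement.
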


\begin{proof}
Clearly we have that $G\subset \sigma(A_{L^p}(G))$, so it suffices to check that $\sigma(A_{L^p}(G))\subset G$.
Let $\chi\in \sigma(A_{L^p}(G))$ and choose an integer $n$ so that $p/n\leq 2$. Then, since $u^n$ is in $A(G)$ for every $u\in A_{L^p}(G)$, there exists $s\in G$ so that $<\chi,u^n>\,=u(s)^n$ for all $u\in A_{L^p}(G)$ \cite[Th\'eor\'eme 3.34]{e}. As $<\chi,u><\chi,u^n>\,=\,<\chi, u^{n+1}>\,=u(s)^{n+1}$, it follows that $\chi$ is evaluation at $s$. Hence, we conclude that $\sigma(A_{L^p}(G))=G$.
\end{proof}

Recall that a linear functional $D$ on a Banach algebra $\mc A$ is said to be a point derivation if there exists some multiplicative linear functional $\chi$ on $\mc A$ so that $D(ab)=\chi(a)D(b)+D(a)\chi(b)$ for all $a,b\in \mc A$. The existence of nonzero point derivations is an obstruction to the (operator) weak amenability of $\mc A$. Since the Fourier algebra is always operator weakly amenable \cite{sp} (see also \cite{sa}), the Fourier algebra does not admit any nonzero point derivations. As a corollary to the above proposition, we show that the $L^p$-Fourier-Stieltjes algebras admit no nonzero point derivations either. This corollary was pointed out to us by Nico Spronk.

\begin{corollary}
Let $G$ be a locally compact group and $p\in [1,\infty)$. Then $A_{L^p}(G)$ does not admit any nonzero point derivations.
\end{corollary}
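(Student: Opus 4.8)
The plan is to reduce the statement to the fact, recalled just above, that the Fourier algebra $A(G)$ admits no nonzero point derivations. The two ingredients I would use are: the preceding proposition, which identifies the multiplicative functional underlying any point derivation of $A_{L^p}(G)$ as a point evaluation $\mathrm{ev}_s$; and the fact that $A(G)$ is an ideal of $B(G)$ while sitting (isometrically) inside $A_{L^p}(G)$.

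Concretely, I would argue as follows. Suppose $D$ is a point derivation on $A_{L^p}(G)$ with associated (nonzero) multiplicative functional $\chi$. By the preceding proposition $\sigma(A_{L^p}(G))=G$, so $\chi$ is evaluation at some $s\in G$. Since $A(G)$ is a norm-closed subspace of $A_{L^p}(G)$ (both carry the $B(G)$-norm), the restriction $D|_{A(G)}$ is again a norm-continuous linear functional, and restricting the derivation identity to $A(G)$ exhibits it as a point derivation of $A(G)$ at the character $\mathrm{ev}_s|_{A(G)}$. As $A(G)$ has no nonzero point derivations (it is operator weakly amenable \cite{sp}; cf.\ the discussion preceding this corollary), $D|_{A(G)}=0$. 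To finish, I would fix some $w\in A(G)$ with $w(s)\neq 0$ — for instance a suitable translate of an element of $A(G)\cap C_c(G)$ \cite{e} — and, for an arbitrary $u\in A_{L^p}(G)\subseteq B(G)$, use that $uw\in A(G)$ because $A(G)$ is an ideal of $B(G)$. Then $D(uw)=0$, while the derivation identity gives $D(uw)=u(s)D(w)+D(u)w(s)=D(u)w(s)$ since $D(w)=0$; dividing by $w(s)$ yields $D(u)=0$, so $D=0$.

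The step that requires the most care is the passage from $A(G)$ to all of $A_{L^p}(G)$: one cannot argue entirely inside $A_{L^p}(G)$, since this algebra need not be square-dense (it fails to be for $G$ noncompact abelian and $p>2$), and it is the ideal property of $A(G)$ in $B(G)$ that bridges the gap cleanly. A variant of the final step, closer to the proof of the spectrum proposition, would instead pick an integer $n$ with $p/n\le 2$, note $u^n\in A(G)$ by Propositions~\ref{Holder} and~\ref{basic}(i), and extract $D(u)=0$ from $D(u^n)=n\,u(s)^{n-1}D(u)$ after first reducing to the case $u(s)\neq 0$. As is standard in this context, I take ``point derivation'' to mean a norm-continuous functional whose associated multiplicative functional is nonzero.
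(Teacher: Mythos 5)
Your proof is correct and follows essentially the same route as the paper: identify the character as a point evaluation via the spectrum proposition, kill $D$ on $A(G)$ using the absence of point derivations there, and then use that $A(G)$ is an ideal in $B(G)$ to force $D(uw)=0$ while the derivation identity gives $D(uw)=D(u)w(s)$. The only cosmetic difference is that you argue directly while the paper phrases the same computation as a contradiction.
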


\begin{proof}
Suppose that $A_{L^p}(G)$ admits a nonzero point derivation $D$ and choose a multiplicative linear functional $\chi$ on $A_{L^p}(G)$ so that $D(uv)=D(u)\chi(v)+D(v)\chi(u)$ for all $u,v\in A_{L^p}(G)$. By the above proposition, $\chi$ is the point evaluation functional at some point $s\in G$. Choose $u\in A_{L^p}(G)$ and $v\in A(G)$ so that $D(u)\neq 0$ and $v(s)\neq 0$. Then
$$ D(uv)=D(u)\chi(v)+\chi(u)D(v)=v(s)D(u)\neq 0$$
since $v\in A(G)$ implies that $D(v)= 0$. But since $A(G)$ is an ideal in $B(G)$ and $A(G)$ admits no nonzero point derivations, we must have that $D(uv)=0$. This contradicts the above calculation and, therefore, we conclude that $A_{L^p}(G)$ admits no point derivations.
\end{proof}

One of the most coveted properties of the Fourier algebra $A(G)$ is that it completely determines the underlying locally compact group $G$. We now show that the analogue of this theorem holds for $A_{L^p}(G)$. The proof is similar to that given by Martin Walter and we refer the reader to his original paper \cite{walt} for most of the details.

\begin{theorem}
Let $G_1$ and $G_2$ be locally compact groups and suppose $A_{L^p}(G_1)$ is isometrically isomorphic to $A_{L^q}(G_2)$ as Banach algebras for some $p,q\in[2,\infty)$. Then $G_1$ is homeomorphically isomorphic to $G_2$.
\end{theorem}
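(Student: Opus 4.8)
The plan is to adapt Walter's proof that $A(G)$ determines $G$, exploiting that $A_{L^p}(G)$ is an ideal in $B(G)$ containing the Fourier algebra $A(G)$ densely in an appropriate sense. First I would note that an isometric algebra isomorphism $\Phi\fn A_{L^p}(G_1)\to A_{L^q}(G_2)$ induces a homeomorphism of the spectra. By the preceding proposition the spectrum of $A_{L^p}(G_i)$ is $G_i$ (with the Gelfand topology, which agrees with the original topology on $G_i$ since point evaluations separate points and $A_{L^p}(G_i)\supset A(G_i)$ which already induces the correct topology). Hence $\Phi$ is dual to a homeomorphism $\varphi\fn G_2\to G_1$, and $\Phi u=u\circ\varphi$ for all $u\in A_{L^p}(G_1)$. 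The task is then to upgrade this homeomorphism to a group isomorphism (possibly composed with the flip $s\mapsto s^{-1}$, or twisted by a character; Walter's theorem produces an isomorphism onto $G$ or onto $G$ with reversed multiplication).

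The key mechanism in Walter's argument is the characterization of the extreme points of the unit ball of $B(G)$, or more precisely the identification of the group elements inside $B(G)$ as exactly the invertible isometries, via the operator-space/von Neumann algebra structure. Since $A_{L^p}(G)$ sits weak*-densely (after taking $B_{L^p}(G)$) inside $B(G)$ and is an ideal, I would argue that an isometric isomorphism of $A_{L^p}(G_1)$ onto $A_{L^q}(G_2)$ extends to an isometric isomorphism of the multiplier algebras, or better, extends to the respective Fourier-Stieltjes algebras $B(G_i)$ — since $B(G_i)$ can be recovered from $A_{L^p}(G_i)$ as (a suitable completion/multiplier object of) the ideal. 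Concretely: the idempotents, the order structure coming from positive-definiteness, and the isometry condition are all intrinsic, so $\Phi$ carries $P(G_1)\cap A_{L^p}(G_1)$-data in a way that reconstructs the full $B(G_1)$. Once we have an isometric isomorphism $\widetilde\Phi\fn B(G_1)\to B(G_2)$ of Fourier--Stieltjes algebras, Walter's theorem applies verbatim to conclude $G_1\cong G_2$ as topological groups.

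The main obstacle I anticipate is precisely the extension step: showing that an isometric isomorphism of the ideals $A_{L^p}(G_1)\cong A_{L^q}(G_2)$ forces an isometric isomorphism of the ambient algebras $B(G_1)\cong B(G_2)$ (or at least of $B_{L^p}(G_1)\cong B_{L^q}(G_2)$, together with enough structure to run Walter's argument). One has to be careful that $A_{L^p}$ need not have a bounded approximate identity (indeed, for noncompact abelian $G$ and $p>2$ it is not even square-dense, as remarked in the introduction), so one cannot simply invoke standard multiplier-algebra machinery. The way around this is to use that $A(G)\subset A_{L^p}(G)$ is an ideal and $A(G)$ does have a bounded (contractive) approximate identity precisely when $G$ is amenable — but in general we only know $A(G)$ is an ideal in $A_{L^p}(G)$, and the spectrum computation plus Walter's local arguments (which are pointwise/norm-local near the identity of the group) should suffice without needing a global bounded approximate identity. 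So the real content is checking that Walter's reconstruction of the group law from the norm and the involution survives when we only have the ideal $A_{L^p}(G)$ together with its natural embedding into $B(G)$; I expect this to go through because Walter's key steps are local in nature, but verifying the details is where the work lies, and that is why the authors defer to ``the proof is similar to that given by Martin Walter and we refer the reader to his original paper \cite{walt} for most of the details.''
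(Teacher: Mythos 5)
Your first step --- that an isometric algebra isomorphism $\Phi$ induces a homeomorphism of spectra, combined with the preceding proposition identifying $\sigma(A_{L^p}(G))$ with $G$, and a check that the Gelfand/weak* topology on the spectrum agrees with the group topology --- is exactly the content the paper verifies, so that part is fine. The gap is in your proposed mechanism for recovering the group law. You want to extend $\Phi$ to an isometric isomorphism $B(G_1)\to B(G_2)$ (or of multiplier algebras) and then invoke Walter's theorem for Fourier--Stieltjes algebras. You give no actual argument for this extension: the appeal to ``idempotents, the order structure coming from positive-definiteness, and the isometry condition are all intrinsic'' does not produce a map on $B(G_1)$, and as you yourself note, $A_{L^p}(G)$ can fail to be square dense and has no bounded approximate identity in general, so none of the standard extension machinery applies. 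This step would not survive scrutiny as written.

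The extension is also unnecessary, and this is where the paper's route differs. Since $A_{L^p}(G)=A_\pi$ for a single representation $\pi$ (Proposition 4.1), it is the predual of a von Neumann algebra $VN_{L^p}(G)=VN_\pi$. Walter's argument for $A(G)$ runs entirely through the dual pair $(A(G),VN(G))$: the adjoint $\Phi^*$ is a weak*-continuous surjective isometry of von Neumann algebras, hence by Kadison's theorem a Jordan $*$-isomorphism twisted by a unitary, and the group sits inside $VN$ as the multiplicative functionals on the predual. All of this applies verbatim with $VN(G)$ replaced by $VN_{L^p}(G)$; the only point that is not automatic is that the canonical bijection $G\to\sigma(A_{L^p}(G))$ is a homeomorphism for the weak* topology, which the paper checks by factoring the embedding $G\to VN_{L^p}(G)$ through $VN(G)$ (using $A(G)\subset A_{L^p}(G)$) and quoting Eymard. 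So you should replace the extension-to-$B(G)$ step by the observation that $A_{L^p}(G)$ is itself a von Neumann algebra predual and run Walter's argument directly on it.
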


\begin{proof}
Most of this proof is identical to that given by Walter and a careful read of his paper reveals that the only detail that is left to be verified is that the identification of $G$ with $\sigma(A_{L^p}(G))$ is a homeomorphic one when $\sigma(A_{L^p}(G))$ is equipped with the weak*-topology.

Let $VN_{L^p}(G)$ be the von Neumann algebra dual to $A_{L^p}(G)$ described in Section \ref{spaces}. Then the canonical embedding of $G$ into $VN_{L^p}(G)$ is continuous in the weak*-topologies. Denote this map by $\rho$. Then, since $A(G)$ is contained in $A_{L^p}(G)$, the map $\rho(s)\mapsto\lambda(s)$ from is continuous in the weak*-topologies from $VN_{L^p}(G)$ and $VN(G)$, respectively. Finally, Eymard showed that the map $\lambda(s)\mapsto s$ from $\lambda(G)$ to $G$ is continuous \cite[Th\'eor\'eme 3.34]{e}. Hence, we conclude the identification of $G$ with $\sigma(A_{L^p}(G))$ is a homemorphic one.
\end{proof}

The Fourier algebra admits many beautiful properties and it natural to wonder whether analogues of these continue to hold for the $L^p$-Fourier algebra. In many cases, such as with Walter's theorem, analogues do exist, but we will now see that this is not always the case.

So far we have found several classes of noncompact groups $G$ so that $A_{L^p}(G)$ is distinct for every $p\in [2,\infty)$. The following example shows in a strong way that this need not happen in general.

\begin{example}
Let $G$ be the $ax+b$ group. In 1974 Idriss Khalil demonstrated that the Fourier algebra $A(G)$ coincides with its Rajchman algebra $B_0(G):=B(G)\cap C_0(G)$ \cite{kh}. Since elements $B(G)$ are uniformly continuous, if $u\in B(G)$ is $L^p$-integrable then $u\in C_0(G)$. As $A_{L^p}(G)$ is the closed linear span of $P(G)\cap L^p(G)$ and the norm on $B(G)$ dominates the uniform norm, it follows that $A_{L^p}(G)\subset B_0(G)$. Therefore $A_{L^p}(G)=A(G)$ for every $1\leq p<\infty$.
\end{example}

\begin{remark}\label{Herz fails}
Herz's restriction theorem states that if $G$ is a locally compact group and $H$ is a closed subgroup of $G$, then $A(G)|_H=A(H)$. This example shows that the analogue of this theorem does not hold for $A_{L^p}$ when $p>2$. Indeed, $\R$ is a closed subgroup of the $ax+b$ group $G$, but $A_{L^p}(G)|_\R=A(G)|_\R=A(\R)\neq A_{L^p}(\R)$ for $p>2$ by Theorem \ref{abelian}.
\end{remark}

As previously mentioned, a locally compact group $G$ is amenabile if and only if $A(G)$ is operator amenable \cite{ru} if and only if $A(G)$ admits a bounded identity \cite{le}. These theorems fail attrociously when $A(G)$ is replaced with $A_{L^p}(G)$ for $p>2$. Indeed, our next example shows that $A_{L^p}(G)$ need not even be square dense even when $G$ is abelian.

\begin{example}
Let $G$ be a noncompact abelian group and $p>2$. Then $uv\in A_{L^{p/2}}(G)$ for all $u,v\in A_{L^p}(G)$ implies that $A_{L^p}(G)\cdot A_{L^p}(G)\subset A_{L^{p/2}}(G)$. By Theorem \ref{abelian} we know that $A_{L^{p/2}}(G)$ is strictly contained in $A_{L^p}(G)$. So $A_{L^p}(G)$ is not square dense.
\end{example}

%\begin{remark}
%As above, let $G$ be a noncompact locally compact abelian group and $p>2$. Then $A_{L^p}(G)$ is a Banach function algebra in $C_0(G)$. Since $A_{L^p}(G)$ contains $A(G)$ and $G$ is amenable, then $A_{L^p}(G)$ has a contractive pointwise approximate identity but $A_{L^p}(G)$ does not contain an approximate identity since it is not square dense. An example of a Banach function algebra with this property was first observed by Jones and Lahr REF.
%\end{remark}

As a consequence of this observation, we find that $A_{L^p}(G)$ is never an amenable Banach algebra when $G$ is noncompact and $p>2$.

\begin{prop}
Let $G$ be a locally compact group and $p>2$. If $A_{L^p}(G)\neq A(G)$, then $A_{L^p}(G)$ is not (operator) weakly amenable.
\end{prop}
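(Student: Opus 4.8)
The plan is to derive a contradiction with (operator) weak amenability from the fact that the hypothesis $A_{L^p}(G)\neq A(G)$ forces $A_{L^p}(G)$ to have non-dense square.

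First I would record the elementary and well-known fact that a commutative Banach algebra $\A$ which is weakly amenable, or merely operator weakly amenable, must satisfy $\overline{\A^2}=\A$, where $\A^2$ denotes the linear span of products. Indeed, suppose $\overline{\A^2}\subsetneq \A$ and choose $\lambda\in\A^*$ with $\lambda\neq 0$ and $\lambda|_{\overline{\A^2}}=0$. Define $D\fn\A\to\A^*$ by $D(a)=\lambda(a)\lambda$. Then $D$ is bounded, with $\|D(a)\|\leq\|\lambda\|^2\|a\|$; it has one-dimensional range and is therefore completely bounded; and it is a derivation, since both $D(ab)=\lambda(ab)\lambda$ and $a\cdot D(b)+D(a)\cdot b$ only involve $\lambda$ evaluated on elements of $\A^2$ and hence vanish. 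As $\A$ is commutative, $\A^*$ is a symmetric $\A$-bimodule, so the only inner derivation into it is $0$; since $D\neq 0$, this shows $\A$ is neither weakly amenable nor operator weakly amenable.

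Next I would show $A_{L^p}(G)$ is not square dense. Since $p>2$, let $k\geq 1$ be least with $p\leq 2^{k+1}$, so that $2^k<p\leq 2^{k+1}$; then the exponents $p,p/2,\ldots,p/2^k$ all lie in $(1,\infty)$ and $p/2^k\in(1,2]$. Iterating Proposition \ref{Holder} with equal exponents at each step (legitimate by the previous sentence) gives $A_{L^p}(G)^{2^k}\subseteq A_{L^{p/2^k}}(G)$, and $A_{L^{p/2^k}}(G)=A(G)$ by Proposition \ref{basic}(i). A routine induction shows square density propagates: if $\overline{\A^{2^j}}=\A$ and $a,b\in\A$ are limits of sequences $a_n,b_n\in\A^{2^j}$, then $a_nb_n\in\A^{2^{j+1}}$ and $a_nb_n\to ab$, so $\A^2\subseteq\overline{\A^{2^{j+1}}}$, whence $\A=\overline{\A^2}\subseteq\overline{\A^{2^{j+1}}}$. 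Applying this with $\A=A_{L^p}(G)$, square density would give $A_{L^p}(G)=\overline{A_{L^p}(G)^{2^k}}\subseteq A_{L^{p/2^k}}(G)=A(G)$, using that $A(G)$ is norm-closed in $B(G)$; since $A(G)\subseteq A_{L^p}(G)$ always, this contradicts $A_{L^p}(G)\neq A(G)$. Hence $A_{L^p}(G)$ is not square dense and so, by the first step, not (operator) weakly amenable. The only point requiring care is keeping every application of Hölder within the admissible range of exponents; there is no real obstacle, and the remaining computations (that $D$ is a derivation, the density induction) are routine.
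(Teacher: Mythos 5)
Your proof is correct, and while it follows the same overall strategy as the paper --- deduce failure of (operator) weak amenability from failure of square density --- the way you establish non-square-density is genuinely different. The paper first reduces, via the perturbation $\widetilde p=\inf\{q\in[2,\infty) : A_{L^q}(G)=A_{L^p}(G)\}$ and replacement of $p$ by $\widetilde p+\epsilon$, to the case $A_{L^{p/2}}(G)\neq A_{L^p}(G)$, so that a \emph{single} application of Proposition \ref{Holder} gives $A_{L^p}(G)\cdot A_{L^p}(G)\subseteq A_{L^{p/2}}(G)\subsetneq A_{L^p}(G)$ with the right-hand side closed. You instead iterate H\"older all the way down, obtaining $A_{L^p}(G)^{2^k}\subseteq A_{L^{p/2^k}}(G)=A(G)$, and use the ``square density propagates to $2^k$-fold products'' induction to conclude that square density would force $A_{L^p}(G)\subseteq A(G)$, contradicting the hypothesis directly. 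Your route avoids the infimum/perturbation step (which implicitly relies on the monotonicity of $q\mapsto A_{L^q}(G)$) at the cost of the extra density induction; it is also more self-contained, since you supply the standard rank-one-derivation argument showing that a commutative Banach algebra which is weakly amenable, or even operator weakly amenable, must have dense square --- a fact the paper invokes without proof. The exponent bookkeeping ($2^k<p\leq 2^{k+1}$ keeps every application of Proposition \ref{Holder} in the admissible range $[1,\infty)$, with $p/2^k\in(1,2]$ so that Proposition \ref{basic}(i) applies at the last step) all checks out.
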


\begin{proof}
Without loss of generality, we may assume that $A_{L^{p/2}}(G)\neq A_{L^p}(G)$. Indeed, if not we define
$$\widetilde p=\inf\{q\in [2,\infty) : A_{L^q}(G)=A_{L^p}(G)\}$$
and replace $p$ with $\widetilde p+\epsilon$ for some $0<\epsilon<\min\{1,p-\widetilde p\}$. Then the space $A_{L^p}(G)$ has not changed and $A_{L^p}(G)\neq A_{L^{p/2}}(G)$ since $\widetilde p> 1$ implies $p/2< (1+\widetilde p)/2<\widetilde p$. So indeed we may assume that $A_{L^p}(G)\neq A_{L^{p/2}}(G)$. Then a similar argument as in the previous example shows that $A_{L^p}(G)$ is not square dense and, therefore, is not (operator) weakly amenable.
\end{proof}

\begin{corollary}
Let $G$ be a noncompact locally compact group and $p>2$. Then $A_{L^p}(G)$ is a nonamenable Banach algebra.
\end{corollary}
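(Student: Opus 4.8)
The plan is to split the argument according to whether $A_{L^p}(G)$ coincides with the Fourier algebra $A(G)$ or not, and to feed each case into a result already available.

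If $A_{L^p}(G)\neq A(G)$, then the preceding proposition applies verbatim: $A_{L^p}(G)$ is not (operator) weakly amenable. Since amenability of a Banach algebra implies weak amenability, $A_{L^p}(G)$ is not amenable, and this case is finished. If instead $A_{L^p}(G)=A(G)$, I would first show that $G$ cannot be almost abelian. Suppose it were, so that $G$ contains an open abelian subgroup $H$ of finite index. Then $H$ is noncompact, since a finite-index subgroup of a noncompact group is noncompact (a finite union of translates of a compact set is compact). By Proposition \ref{open} we have $A_{L^p}(G)|_H=A_{L^p}(H)$, while the (elementary, open-subgroup) case of Herz's restriction theorem gives $A(G)|_H=A(H)$. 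Restricting the hypothesis $A_{L^p}(G)=A(G)$ to $H$ therefore yields $A_{L^p}(H)=A(H)$, which contradicts Theorem \ref{abelian}, as that theorem asserts $A_{L^p}(H)\neq A_{L^2}(H)=A(H)$ for the noncompact abelian group $H$ when $p>2$. Hence $G$ is not almost abelian, and the Forrest--Runde characterization \cite{fr} of amenability of the Fourier algebra shows $A(G)=A_{L^p}(G)$ is not amenable.

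The two cases are exhaustive, so $A_{L^p}(G)$ is a nonamenable Banach algebra in all cases. The only step with genuine content is the case $A_{L^p}(G)=A(G)$, which I reduce --- via restriction to an open abelian subgroup of finite index and Theorem \ref{abelian} --- to the deep Forrest--Runde theorem; the remaining manipulations (passing to the open subgroup, the implication amenable $\Rightarrow$ weakly amenable) are routine.
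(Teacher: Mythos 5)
Your proof is correct and follows essentially the same route as the paper: reduce to the case $A_{L^p}(G)=A(G)$ via the preceding proposition (amenable $\Rightarrow$ weakly amenable), then rule out an open abelian finite-index subgroup using Proposition \ref{open} and Theorem \ref{abelian}, and invoke Forrest--Runde. You have merely made explicit the details (noncompactness of the finite-index subgroup, restriction to $H$) that the paper leaves implicit.
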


\begin{proof}
By the above proposition, we may assume without loss of generality that $A_{L^p}(G)=A(G)$. Then $G$ does not contain an open abelian subgroup of finite index by Proposition \ref{open} and Theorem \ref{abelian} since such a subgroup is necessarily noncompact. In particular, this implies that $G$ is not almost abelian. Hence, $A_{L^p}(G)=A(G)$ is nonamenable \cite{fr}.
\end{proof}

Let $G_1$ and $G_2$ be locally compact groups. The Effros-Ruan tensor product formula \cite{er} implies that $A(G_1)\widehat{\otimes}A(G_2)=A(G_1\times G_2)$ where $\widehat{\otimes}$ denotes the operator projective tensor product and $u\otimes v\in A(G_1)\widehat{\otimes}A(G_2)$ is identified with $u\times v\in A(G_1\times G_2)$. The next example shows that the analogue of this formula fails for $A_{L^p}$. Before this, we observe that the algebraic tensor product $A_{L^p}(G_1)\otimes A_{L^p}(G_2)$ embeds in $A_{L^p}(G_1\times G_2)$ via the above identification.

\begin{prop}\label{kronecker}
Let $G_1$ and $G_2$ be locally compact groups and $p>2$. Then $u\times v\in A_{L^p}(G_1\times G_2)$ for all $u\in A_{L^p}(G_1)$ and $v\in A_{L^p}(G_2)$.
\end{prop}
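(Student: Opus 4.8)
The plan is to prove this directly from the description of $A_{L^p}$ as the closed linear span of positive definite functions lying in $L^p$, together with the behavior of positive definite functions and $L^p$-norms under the Kronecker product on $G_1 \times G_2$. First I would reduce to positive definite generators: by Proposition \ref{linear combination}, since $L^p(G_i) \cap C_b(G_i)$ is translation invariant, $A_{L^p}(G_i)$ is the closed linear span of $P(G_i) \cap L^p(G_i)$, so it suffices to handle $u \in P(G_1) \cap L^p(G_1)$ and $v \in P(G_2) \cap L^p(G_2)$ and then pass to limits using that $J\!: u \otimes v \mapsto u \times v$ is contractive (indeed, $\|u \times v\|_{B(G_1 \times G_2)} \leq \|u\|_{B(G_1)}\|v\|_{B(G_2)}$), so the algebraic tensor product maps into $A_{L^p}(G_1 \times G_2)$ provided each generator does.

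Next, for such $u, v$, the product $u \times v$ is a positive definite function on $G_1 \times G_2$: this is the standard fact that the tensor/Kronecker product of positive definite functions is positive definite (equivalently, $u \times v$ is the coefficient function $(\pi_u \otimes \pi_v)_{x_u \otimes x_v, x_u \otimes x_v}$ of the tensor product of the GNS representations of $u$ and $v$). The key analytic step is then to check that $u \times v \in L^p(G_1 \times G_2)$. Using that the Haar measure on $G_1 \times G_2$ is the product measure and Fubini's theorem,
\begin{equation*}
\int_{G_1 \times G_2} |u(s) v(t)|^p \, d(s,t) = \left( \int_{G_1} |u(s)|^p \, ds \right) \left( \int_{G_2} |v(t)|^p \, dt \right) < \infty,
\end{equation*}
so $u \times v \in P(G_1 \times G_2) \cap L^p(G_1 \times G_2) \subset A_{L^p}(G_1 \times G_2)$ by Proposition \ref{linear combination} again. (One does not even need the GNS representation picture: one can instead directly take a tensor product of the $L^p$-representations realizing $u$ and $v$ and observe that on the dense subspace $\Hi_0^{(1)} \otimes \Hi_0^{(2)}$ the diagonal coefficients are products of $L^p$-functions, hence $L^p$ by the computation above, so $\pi_u \otimes \pi_v$ is an $L^p$-representation of $G_1 \times G_2$; this also proves the claim without invoking Proposition \ref{linear combination} and shows more directly that $u \otimes v \in A_{L^p}(G_1 \times G_2)$.)

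Finally I would assemble the general case: given $u \in A_{L^p}(G_1)$, $v \in A_{L^p}(G_2)$, approximate $u \approx \sum_i a_i u_i$ and $v \approx \sum_j b_j v_j$ in $B(G_i)$-norm by finite linear combinations of positive definite $L^p$ elements; then $\sum_{i,j} a_i b_j (u_i \times v_j) \in A_{L^p}(G_1 \times G_2)$ by the above, and it approximates $u \times v$ in $B(G_1 \times G_2)$-norm because $\|u \times v - (\sum a_i u_i) \times (\sum b_j v_j)\|_{B(G_1 \times G_2)}$ is controlled by $\|u - \sum a_i u_i\|_{B(G_1)} \|v\|_{B(G_2)} + \|\sum a_i u_i\|_{B(G_1)} \|v - \sum b_j v_j\|_{B(G_2)}$. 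Since $A_{L^p}(G_1 \times G_2)$ is closed, the limit $u \times v$ lies in it. The only mild subtlety (the "main obstacle," though it is routine) is justifying the contractivity of the Kronecker product map on $B$, i.e.\ $\|u \times v\|_{B(G_1 \times G_2)} \leq \|u\|_{B(G_1)} \|v\|_{B(G_2)}$; this follows from writing $u = \sum_n (\pi)_{x_n,y_n}$, $v = \sum_m (\sigma)_{z_m, w_m}$ with $\sum \|x_n\|\|y_n\|$ and $\sum\|z_m\|\|w_m\|$ close to the respective norms, and noting $u \times v = \sum_{n,m} (\pi \otimes \sigma)_{x_n \otimes z_m, y_n \otimes w_m}$ with $\sum_{n,m} \|x_n \otimes z_m\| \|y_n \otimes w_m\| = (\sum_n \|x_n\|\|y_n\|)(\sum_m \|z_m\|\|w_m\|)$.
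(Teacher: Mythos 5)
Your proof is correct and follows essentially the same route as the paper: reduce to positive definite generators via Proposition \ref{linear combination}, verify the $L^p$ membership of $u\times v$ by the Fubini computation, and pass to general elements by bilinearity and density. You simply spell out the approximation step and the contractivity $\|u\times v\|_{B(G_1\times G_2)}\leq\|u\|_{B(G_1)}\|v\|_{B(G_2)}$, which the paper leaves implicit with the phrase ``similar arguments as used previously.''
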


\begin{proof}
First suppose that $u$ and $v$ are positive definite functions which are $L^p$-integrable. Then $u\times v$ is a positive definite function on $G_1\times G_2$ and
$$ \int_{G_1\times G_2} |u\times v|^p=\int_{G_1}\int_{G_2} |u(s)v(t)|^p\,ds\,dt=\|u\|_p\|v\|_p<\infty.$$
Similar arguments as used previously in the paper now show that $u\times v\in A_{L^p}(G_1\times G_2)$ for all $u\in A_{L^p}(G_1)$ and $v\in A_{L^p}(G_2)$.
\end{proof}

\begin{example}
Let $\Gamma_1$ and $\Gamma_2$ be discrete groups containing copies of nonabelian free groups and $p>2$. Then $A_{\ell^p}(\Gamma_1)\otimes A_{L^p}(\Gamma_2)$ is not norm dense in $A_{L^p}(\Gamma_1\times \Gamma_2)$. Indeed, identify copies of $\F_2$ in both $\Gamma_1$ and $\Gamma_2$ and let $\Delta$ be the diagonal subgroup of $\F_2\times \F_2\subset \Gamma_1\times \Gamma_2$. Then $u\times v|_\Delta\in A_{\ell^{p/2}}(\Delta)$ for all $u\in A_{\ell^p}(\Gamma_1)$ and $v\in A_{\ell^p}(\Gamma_2)$ by Proposition \ref{Holder} and Proposition \ref{open}. But $A_{\ell^p}(\Gamma_1\times \Gamma_2)|_\Delta=A_{\ell^p}(\Delta)$. As $A_{\ell^{p/2}}(\Gamma_1\times\Gamma_2)$ is a proper subspace of $A_{\ell^p}(\Gamma_1\times \Gamma_2)$, we conclude that $A_{\ell^p}(\Gamma_1)\otimes A_{\ell^p}(\Gamma_2)$ is not norm dense in $A_{\ell^p}(\Gamma_1)\otimes A_{\ell^p}(\Gamma)$.
%Further, we even have that $\overline{A_{\ell^p}(\Gamma_1)\odot A_{\ell^p}(\Gamma_2)}^{\sigma(B(G),C^*(G))}$ does not contain $B_{\ell^p}(\Gamma_1\times \Gamma_2)$ since $B_{\ell^p/2}(\Gamma_1\times \Gamma_2)$ is a proper subset of $B_{\ell^p}(\Gamma_1\times \Gamma_2)$.
\end{example}

The observations made in this previous example have applications to finding intermediate $C^*$-norms between the spatial and maximal tensor product norms.

\begin{theorem}\label{tensor}
Let $\Gamma_1$ and $\Gamma_2$ be discrete groups containing copies of noncommutative free groups and $p> 2$. Then $C^*_{\ell^p}(\Gamma_1\times \Gamma_2)$ gives rise to a $C^*$-norm on the algebraic tensor product $C^*_{\ell^p}(\Gamma_1)\otimes C^*_{\ell^p}(\Gamma_2)$ in the natural way. This norm is distinct from the minimal and maximal tensor product norms.
\end{theorem}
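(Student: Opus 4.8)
The plan is to show that the composition $C^*_{\ell^p}(\Gamma_1)\otimes C^*_{\ell^p}(\Gamma_2)\to C^*_{\ell^p}(\Gamma_1\times\Gamma_2)$ is a well-defined $C^*$-norm strictly between the minimal and maximal ones. First I would produce the map itself: by the universal property, the canonical inclusions $\Gamma_i\hookrightarrow\Gamma_1\times\Gamma_2$ are $\ell^p$-representations of $\Gamma_i$ (their coefficient functions, extended off $\Gamma_i$, are supported on a subgroup and one checks $\ell^p$-membership via Proposition \ref{open}, since $\Gamma_i$ is an open — indeed any — subgroup, or more directly: the restriction of the universal $\ell^p$-representation of $\Gamma_1\times\Gamma_2$ to $\Gamma_i$ is an $\ell^p$-representation of $\Gamma_i$), so we get $*$-homomorphisms $C^*_{\ell^p}(\Gamma_i)\to C^*_{\ell^p}(\Gamma_1\times\Gamma_2)$ with commuting ranges, hence a $*$-homomorphism $\Phi$ from the algebraic tensor product whose image is dense. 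One must check $\Phi$ is injective on the algebraic tensor product; this follows because composing with the quotient $C^*_{\ell^p}(\Gamma_1\times\Gamma_2)\to C^*_r(\Gamma_1\times\Gamma_2)=C^*_r(\Gamma_1)\otimes_{\min}C^*_r(\Gamma_2)$ and using that each $C^*_{\ell^p}(\Gamma_i)\to C^*_r(\Gamma_i)$ is a quotient of $C^*$-algebras, one lands in the minimal tensor product, on which the induced map is injective. Pulling back the norm of $C^*_{\ell^p}(\Gamma_1\times\Gamma_2)$ along $\Phi$ thus gives a genuine $C^*$-norm $\|\cdot\|_\beta$ on $C^*_{\ell^p}(\Gamma_1)\otimes C^*_{\ell^p}(\Gamma_2)$, with completion $C^*_{\ell^p}(\Gamma_1\times\Gamma_2)$.

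Next I would identify the dual space of this completion with a Fourier–Stieltjes space and translate the separation problem into the language of the preceding example. By Proposition 4.2 (the identification $B_D=C^*_D(G)^*$), the dual of $C^*_{\ell^p}(\Gamma_1\times\Gamma_2)$ is $B_{\ell^p}(\Gamma_1\times\Gamma_2)$, the dual of $C^*_r(\Gamma_1\times\Gamma_2)$ is $B_r(\Gamma_1\times\Gamma_2)$, and the dual of the maximal tensor product is $B(\Gamma_1\times\Gamma_2)$. Two $C^*$-norms on the algebraic tensor product coincide iff their completions have the same dual space (as subspaces of the dual of the maximal completion, i.e.\ of $B(\Gamma_1\times\Gamma_2)$), by the correspondence between $C^*$-completions and weakly-contained families recalled in Section \ref{spaces}.

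Then the separation itself: to see $\|\cdot\|_\beta\neq\|\cdot\|_{\max}$, note $B_{\ell^p}(\Gamma_1\times\Gamma_2)\subsetneq B(\Gamma_1\times\Gamma_2)$ because $\Gamma_1\times\Gamma_2$ is nonamenable (it contains a free subgroup), so $C^*_{\ell^p}(\Gamma_1\times\Gamma_2)\neq C^*(\Gamma_1\times\Gamma_2)$ by Proposition \ref{basic}(iv) — equivalently $B_{\ell^p}\neq B$. To see $\|\cdot\|_\beta\neq\|\cdot\|_{\min}$, I invoke the computation of the previous Example: if these norms agreed we would have $B_{\ell^p}(\Gamma_1\times\Gamma_2)=B_r(\Gamma_1\times\Gamma_2)$, hence $A_{\ell^p}(\Gamma_1\times\Gamma_2)=A(\Gamma_1\times\Gamma_2)$; restricting to the diagonal subgroup $\Delta\cong\F_2$ of $\F_2\times\F_2\subset\Gamma_1\times\Gamma_2$ and using $A_{\ell^p}(\Gamma_1\times\Gamma_2)|_\Delta=A_{\ell^p}(\Delta)$ (Proposition \ref{open}) and Herz restriction $A(\Gamma_1\times\Gamma_2)|_\Delta=A(\Delta)$, we would get $A_{\ell^p}(\F_2)=A(\F_2)$, contradicting that $A_{\ell^p}(\F_2)=A_{\ell^p}(\Delta)$ is strictly larger than $A_{\ell^2}(\Delta)=A(\Delta)$ (Okayasu's theorem, as invoked in Section 3, equivalently the Corollary after Theorem \ref{discrete abelian} applied inside $\F_2$ via a dissociate set, or simply the distinctness of the $C^*_{\ell^p}(\F_2)$).

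I expect the main obstacle to be the first paragraph — verifying carefully that $\Phi$ is isometric onto its range as a map of $C^*$-algebras and injective on the algebraic tensor product, i.e.\ that pulling back really gives a cross-norm dominating $\|\cdot\|_{\min}$ and dominated by $\|\cdot\|_{\max}$. The domination by $\|\cdot\|_{\max}$ is automatic (universal property of the maximal tensor product), and domination of $\|\cdot\|_{\min}$ follows from the factorization through $C^*_r(\Gamma_1)\otimes_{\min}C^*_r(\Gamma_2)$ combined with the fact that $C^*_r(\Gamma_i)$ is exact (free groups, hence their supergroups containing them appropriately — but here we only need $\min$ on the reduced algebras, which is automatic), so that a nonzero element of the algebraic tensor product has nonzero image. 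The separation arguments themselves are then essentially bookkeeping on top of the already-established Example.
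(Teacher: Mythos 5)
Your overall architecture (pull back the norm of $C^*_{\ell^p}(\Gamma_1\times\Gamma_2)$, then separate it from $\min$ and $\max$ via the diagonal subgroup) matches the paper's, but three of your key steps rest on misidentifications, and two of them sit exactly where the content of the theorem lives. First, injectivity of $\Phi$ on the algebraic tensor product cannot be obtained by composing with the quotient onto $C^*_r(\Gamma_1)\otimes_{\min}C^*_r(\Gamma_2)$: that composite is $q_1\otimes q_2$ for the quotient maps $q_i\fn C^*_{\ell^p}(\Gamma_i)\to C^*_r(\Gamma_i)$, and since $\ker q_i\neq 0$ for $p>2$ (Okayasu), it annihilates $\ker q_1\otimes C^*_{\ell^p}(\Gamma_2)$ and is far from injective. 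What is actually needed is that the pulled-back $C^*$-seminorm dominates the minimal tensor norm of the $\ell^p$-algebras themselves, i.e.\ that $\pi_1\times\pi_2$ is weakly contained in the $\ell^p$-representations of $\Gamma_1\times\Gamma_2$ for faithful $\ell^p$-representations $\pi_i$ of $C^*_{\ell^p}(\Gamma_i)$; this is exactly what Proposition \ref{kronecker} supplies, and your proposal never invokes it.

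Second, both separation arguments compare against the wrong algebras. The minimal tensor norm on $C^*_{\ell^p}(\Gamma_1)\otimes C^*_{\ell^p}(\Gamma_2)$ is the norm of $C^*_{\pi_1\times\pi_2}(\Gamma_1\times\Gamma_2)$, not of $C^*_r(\Gamma_1\times\Gamma_2)$; so equality with $\min$ would force $B_{\ell^p}(\Gamma_1\times\Gamma_2)=B_{\pi_1\times\pi_2}$, not $=B_\lambda$, and the diagonal restriction must be played against $A_{\ell^{p/2}}(\Delta)$ (since $u\times v|_\Delta\in A_{\ell^{p/2}}(\Delta)$, as in the paper's preceding Example) rather than against $A(\Delta)$. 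That part is fixable. The maximal case is worse: $C^*_{\ell^p}(\Gamma_1)\otimes_{\max}C^*_{\ell^p}(\Gamma_2)$ is only a quotient of $C^*(\Gamma_1\times\Gamma_2)$ --- under the correspondence of \cite[Proposition 2.1]{w2} its dual is the largest Fourier-Stieltjes space of $\Gamma_1\times\Gamma_2$ restricting to $B_{\ell^p}(\Gamma_i)$ on each factor and containing $B_{\pi_1\times\pi_2}$, which is not all of $B(\Gamma_1\times\Gamma_2)$ --- so nonamenability of $\Gamma_1\times\Gamma_2$ tells you nothing about whether your norm equals $\max$. The paper needs a genuinely new input here: a Fourier-Stieltjes space $B_\sigma$ from \cite[Theorem 3.2]{w2} satisfying the restriction conditions (hence dominated by the max tensor norm) whose restriction to the diagonal $\Delta\cong\F_2$ contains the constant function $1$, which $B_{\ell^p}(\Gamma_1\times\Gamma_2)|_\Delta=B_{\ell^p}(\Delta)$ cannot contain since $\F_2$ is nonamenable. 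Without such a construction your proof of distinctness from $\max$ does not go through.
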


Before proving this theorem, we recall a result which we will make use of. Let $G_1$ and $G_2$ be locally compact groups with representations $\pi_1$ and $\pi_2$. We showed in \cite[Proposition 2.1]{w2} that there is a one-to-one correspondence between $C^*$-norms on the algebraic tensor product $C^*_{\pi_1}(G_1)\otimes C^*_{\pi_2}(G_2)$ and Fourier-Stieltjes spaces $B_\sigma$ of $G_1\times G_2$ such that $B_\sigma|_{G_1}=B_{\pi_1}$ $B_\sigma|_{G_2}=B_{\pi_2}$ and $B_{\sigma}\supset B_{\pi_1\times \pi_2}$. The $C^*$-norm on $C^*_{\pi_1}(G_1)\otimes C^*_{\pi_2}(G_2)$ corresponding to $B_\sigma$ is the natural one, i.e., for $f_1,\ldots,f_n\in L^1(G_1)$ and $g_1,\ldots,g_n\in L^1(G_2)$, the norm of $\sum_{k=1}^n\pi_1(f_k)\otimes \pi_2(g_k)$ is given by
$$ \bigg\|\sum_{k=1}^n \pi_1(f_k)\otimes \pi_2(g_k)\bigg\|=\bigg\|\sum_{k=1}^n \sigma\big(f_k\times g_k)\bigg\|.$$

\begin{proof}[Proof of Theorem \ref{tensor}.]
Let $\pi_1$ and $\pi_2$ be faithful $\ell^p$-representations for $C^*_{\ell^p}(\Gamma_1)$ and $C^*_{\ell^p}(\Gamma_2)$, respectively. It follows from Proposition \ref{kronecker} that $B_{\pi_1\times\pi_2}\subset B_{\ell^p}(\Gamma_1\times\Gamma_2)$ and, by Proposition \ref{open}, $B_{\ell^p}(\Gamma_1\times\Gamma_2)|_{\Gamma_1}=B_{\ell^p}(\Gamma_1)=B_{\pi_1}$ and $B_{\ell^p}(\Gamma_1\times\Gamma_2)|_{\Gamma_2}=B_{\ell^p}(\Gamma_2)=B_{\pi_2}$. So $C^*_{\ell^p}(\Gamma_1\times\Gamma_2)$ indeed induces a $C^*$-norm on $C^*_{\ell^p}(\Gamma_1)\otimes C^*_{\ell^p}(\Gamma_2)$ in the natural way. From the observations in the previous example, we have that $B_{\ell^p}(\Gamma_1)\neq B_{\pi_1\times \pi_2}$ and, hence, that the norm coming from $C^*_{\ell^p}(\Gamma_1\times\Gamma_2)$ is not the spatial tensor product norm.

Identify copies of $\F_2$ in $\Gamma_1\times \Gamma_2$ and let $\Delta$ denote the diagonal subgroup of $\F_2\times \F_2\subset \Gamma_1\times\Gamma_2$. In the proof of \cite[Theorem 3.2]{w2} a Fourier-Stieltjes space $B_\sigma$ satisfying the above conditions is constructed with the property that $B_\sigma|_{\Delta}$ contains the constant function 1. Then $B_{\ell^p}(\Gamma_1\times \Gamma_2)$ does not contain $B_\sigma$ since otherwise $B_{\ell^p}(\Delta)$ would contain the constant 1 and, hence, $B_{\ell^p}(\Delta)$ would be all of $B(\Delta)$. This would be a contradiction since $\Delta\cong \F_2$ is nonamenable.
\end{proof}

In a previous example we observed that characterizations of amenability in terms of the Fourier algebra can fail when $A(G)$ is replaced with $A_{L^p}(G)$. We finish this section by identifying some characterizations of amenability which do translate over.

Let $\mc A$ be a Banach algebra. A linear operator $T\fn \mc A\to \mc A$ is said to be a multiplier of $\mc A$ if $T(ab)=aT(b)=T(a)b$ for all $a,b\in \mc A$. In the context of Fourier algebras $A(G)$, every multiplier every multiplier is bounded and is realizable as multiplication by some function on $G$ \cite{lo}. Viktor Losert characterized the amenability of a locally compact group $G$ in terms of multipliers by showing that $G$ is amenable if and only if $M(A(G))$, the set of multipliers of $A(G)$, is exactly $B(G)$ if and only if the norm on $A(G)$ is equivalent to the norm it attains as a multiplier on itself \cite[Theorem 1]{lo}. We show that the analogue of this theorem holds for $L^p$-Fourier algebras.

\begin{theorem}
The following are equivalent for a locally compact group $G$ and $1\leq p<\infty$.
\begin{itemize}
\item[(i)] $G$ is amenable.
\item[(ii)] $M(A_{L^p}(G))=B(G)$.
\item[(iii)] $\|\cdot\|_{B(G)}$ is equivalent to $\|\cdot\|_{M(A_{L^p}(G))}$ on $B(G)$.
\item[(iv)] $\|\cdot\|_{B(G)}$ is equivalent to $\|\cdot\|_{M(A_{L^p}(G))}$ on $A(G)$.
\item[(v)] $\|\cdot\|_{B(G)}$ is equivalent to $\|\cdot\|_{M(A(G))}$ on $A(G)$.
\end{itemize}
\end{theorem}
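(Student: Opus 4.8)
The plan is to establish the chain of implications
\[
\text{(i)} \Rightarrow \text{(ii)} \Rightarrow \text{(iii)} \Rightarrow \text{(iv)} \Rightarrow \text{(v)} \Rightarrow \text{(i)},
\]
leaning on Losert's original work \cite{lo} for the parts that involve only $A(G)$ and $B(G)$, namely $\text{(i)} \Leftrightarrow \text{(v)}$ and the fact that $\|\cdot\|_{B(G)} \asymp \|\cdot\|_{M(A(G))}$ forces amenability. The new content is that $A_{L^p}(G)$, which sits strictly between $A(G)$ and $B(G)$, can be inserted as the domain of the multiplier action without changing the conclusion. First I would record that any $u \in B(G)$ acts as a bounded multiplier on $A_{L^p}(G)$: since $A_{L^p}(G)$ is an ideal in $B(G)$ (as $L^p \cap C_b(G)$ is an ideal in $C_b(G)$, noted after Proposition~\ref{linear combination}), multiplication by $u$ maps $A_{L^p}(G)$ into itself with $\|uv\|_{B(G)} \le \|u\|_{B(G)}\|v\|_{B(G)}$; this gives a contractive inclusion $B(G) \hookrightarrow M(A_{L^p}(G))$ and settles $\text{(i)} \Rightarrow \text{(ii)}$ trivially (every element of $B(G)$ is already a multiplier, and conversely when $G$ is amenable $B(G) = B_\lambda$ exhausts everything — though for $\text{(ii)}$ one only needs $M(A_{L^p}(G)) \subseteq B(G)$, which I'd get by restricting a multiplier to the ideal $A(G) \subseteq A_{L^p}(G)$ and invoking Losert's identification $M(A(G)) \subseteq B(G)$ together with a Wiener–Tauberian-type density argument).

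For the heart of the argument, $\text{(v)} \Rightarrow \text{(i)}$ is Losert's theorem directly, and $\text{(iii)} \Rightarrow \text{(iv)} \Rightarrow \text{(v)}$ are obtained by successive restriction: the multiplier norm of $u$ computed over the domain $A_{L^p}(G)$ dominates the one computed over the smaller domain $A(G)$ (more test elements give a larger operator norm), so an equivalence $\|\cdot\|_{B(G)} \asymp \|\cdot\|_{M(A_{L^p}(G))}$ on $A(G)$ immediately yields the same equivalence with $\|\cdot\|_{M(A(G))}$ on $A(G)$ — wait, the inequality goes the convenient way only in one direction, so I need to be careful: $\|u\|_{M(A(G))} \le \|u\|_{M(A_{L^p}(G))} \le \|u\|_{B(G)}$ always holds (the right inequality because $B(G)$ acts contractively), and the content of (iii), (iv), (v) is the reverse bound $\|u\|_{B(G)} \le C\|u\|_{M(\cdot)}$. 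Thus (iii) $\Rightarrow$ (iv) is the restriction of a norm equivalence from $B(G)$ to its subspace $A(G)$; (iv) $\Rightarrow$ (v) follows because $\|u\|_{M(A(G))} \le \|u\|_{M(A_{L^p}(G))}$ makes the (v)-equivalence even easier than (iv); and (v) $\Rightarrow$ (i) is \cite[Theorem~1]{lo}.

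Finally $\text{(ii)} \Rightarrow \text{(iii)}$ is an open-mapping / closed-graph argument: if $M(A_{L^p}(G)) = B(G)$ as sets, I would show the multiplier norm is complete on this space (a norm limit of multipliers is a multiplier, using that $A_{L^p}(G)$ is a Banach algebra and the $L^p$-Fourier–Stieltjes predual structure from Section~\ref{spaces}), so the identity map $(B(G), \|\cdot\|_{B(G)}) \to (B(G), \|\cdot\|_{M(A_{L^p}(G))})$ is a bounded bijection between Banach spaces, hence a topological isomorphism by the open mapping theorem, giving (iii). Closing the loop, $\text{(i)} \Rightarrow \text{(ii)}$: when $G$ is amenable, $C^*(G) = C^*_r(G)$ so $B_{L^p}(G) = B(G)$ by Proposition~\ref{basic}(iii), and combined with the general containment $M(A_{L^p}(G)) \subseteq B(G)$ (proved via the restriction-to-$A(G)$ trick above) and the reverse containment $B(G) \subseteq M(A_{L^p}(G))$ from ideal-ness, we get equality.

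\textbf{Main obstacle.} The delicate step is proving $M(A_{L^p}(G)) \subseteq B(G)$ in general (not assuming amenability), i.e. that every multiplier of $A_{L^p}(G)$ is automatically bounded and given by multiplication by a Fourier–Stieltjes element. Losert's proof for $A(G)$ uses the rich module structure of $A(G)$ over itself and the fact that $A(G)$ has a (bounded or unbounded) approximate identity behaviour relative to $C_0(G)$; for $A_{L^p}(G)$ one must transfer this through the ideal inclusion $A(G) \trianglelefteq A_{L^p}(G) \trianglelefteq B(G)$, restricting a given multiplier $T$ of $A_{L^p}(G)$ to $A(G)$, arguing it is still a multiplier there (which needs $A(G) \cdot A_{L^p}(G) \subseteq A(G)$ — true since $A(G)$ is an ideal in $B(G)$ — and that $T$ preserves $A(G)$, which requires the Wiener property / that $A(G)$ is the closed ideal of elements vanishing at infinity, or an approximate-identity argument), then applying Losert's $M(A(G)) = B(G)$ and extending back. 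Making the extension canonical and norm-controlled, and handling the possible unboundedness of $T$ a priori via a closed-graph argument, is where the real work lies.
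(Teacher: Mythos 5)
Your outline collapses at the one step that carries the real content of the theorem, namely (iv) $\Rightarrow$ (v). You correctly record that $\|u\|_{M(A(G))}\le\|u\|_{M(A_{L^p}(G))}\le\|u\|_{B(G)}$ and that the substance of (iii)--(v) is a reverse bound $\|u\|_{B(G)}\le C\|u\|_{M(\cdot)}$, but you then conclude that this makes the (v)-equivalence ``even easier'' than (iv). The logic runs the other way: since $A(G)$ is a smaller set of test functions than $A_{L^p}(G)$, the bound $\|u\|_{B(G)}\le C\|u\|_{M(A(G))}$ demanded by (v) is \emph{stronger} than the bound in (iv). What your comparison of norms actually yields is (v) $\Rightarrow$ (iv), which is not needed for the cycle; as written, your chain of implications never closes and Losert's theorem cannot be reached.

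The missing idea, which is how the paper proceeds, is to manufacture a single test function in $A(G)$ out of several test functions in $A_{L^p}(G)$ by multiplying them together. Choose $n$ with $p/n\le 2$. Given $u\in A(G)$ and the constant $c$ from (iv), pick a unit vector $v_1\in A_{L^p}(G)$ with $\|uv_1\|_{B(G)}>c\|u\|_{B(G)}$; since $uv_1\in A(G)$ (because $A(G)$ is an ideal in $B(G)$), apply (iv) again to $uv_1$ to get $v_2$, and iterate $n$ times. By Proposition \ref{Holder} the product $v=v_1\cdots v_n$ lies in $A_{L^{p/n}}(G)=A(G)$ and has $B(G)$-norm at most $1$, so it is a legitimate test function for $\|\cdot\|_{M(A(G))}$ and gives $\|u\|_{M(A(G))}\ge c^n\|u\|_{B(G)}$, which is (v). The rest of your outline essentially matches the paper's argument ((i) $\Rightarrow$ (ii) via a closed graph argument, a bounded pointwise approximate identity for amenable $G$, and \cite[Corollaire 2.25]{e}; (ii) $\Rightarrow$ (iii) by the open mapping theorem; (v) $\Rightarrow$ (i) by \cite[Theorem 1]{lo}); the ``main obstacle'' you isolate --- proving $M(A_{L^p}(G))\subseteq B(G)$ without amenability --- is not needed anywhere in the cycle, and indeed cannot be established, since already $M(A(G))$ strictly contains $B(G)$ for nonamenable groups such as free groups.
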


\begin{proof}
{(i)} $\Rightarrow$ {(ii)}: It is an application of the closed graph theorem that every element of $M(A_{L^p}(G))$ is bounded and given by a multiplication operator. Suppose that $v\in C_b(G)$ is a multiplier of $A_{L^p}(G)$. Since $G$ is amenable, $A(G)$ admits a bounded pointwise approximate identity $\{u_\alpha\}$. Then $\{u_\alpha v\}$ is a bounded sequence converging pointwise to $v$ and, hence, $v\in B(G)$ \cite[Corollaire 2.25]{e}.

{(ii)} $\Rightarrow$ {(iii)}: Standard application of the open mapping theorem.

{(iii)} $\Rightarrow$ {(iv)}: Clear.

{(iv)} $\Rightarrow$ {(v)}: Suppose that there exists $c>0$ so that
$$ \sup\{\|u v\|_{B(G)} : v\in A_{L^p}(G), \|v\|_{B(G)}\leq 1\}> c\|u\|_{B(G)} $$
for every $u\in A(G)$ and choose $n$ sufficiently large so that $p/n<2$. Fix $u\in A_{L^p}(G)$ and choose a unit vector $v_1$ in $A_{L^p}(G)$ so that $\|uv_1\|_{B(G)}>c\|u\|_{B(G)}$. Next choose $v_2\in A_{L^p}(G)$ so that
$$\|(uv_1)v_2\|_{B(G)}>c \|uv_1\|_{B(G)}>c^2 \|u\|_{B(G)}.$$
Repeat this process until we arrive at $n$ unit vectors $v_1,\ldots,v_n\in A_{L^p}(G)$ and define $v=v_1\cdots v_n$. Then $v\in A(G)$ has norm at most 1 and $\|uv\|_{B(G)}>c^n\|u\|_{B(G)}$. Hence, $\|\cdot\|_{B(G)}$ is equivalent to $\|\cdot\|_{M(A(G))}$ on $A(G)$.

{(v)} $\Rightarrow$ {(i)}: As mentioned above, this was shown by Losert.
\end{proof}

We now prove a characterization of amenability in terms of the $L^p$-Fourier-Stieltjes algebra. Recall that

In \cite{rs} Volker Runde and Nico Spronk introduced the notion of operator Connes amenability and showed that a locally compact group $G$ is amenable if and only if the reduced Fourier-Stieltjes algebra $B_\lambda$ is operator Connes amenable. Since $B_{L^p}(G)$ is the dual space of $C^*_{L^p}(G)$, it also has a natural operator space structure. We finish this section by showing that this characterization holds for $L^p$-Fourier-Stieltjes algebras.

\begin{theorem}
Let $G$ be a locally compact group and $p\in [1,\infty)$. Then $G$ is amenable if and only if $B_{L^p}(G)$ is operator Connes amenable.
\end{theorem}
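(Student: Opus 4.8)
The plan is to prove this by reducing to the known theorem of Runde and Spronk \cite{rs}: namely, that $G$ is amenable if and only if the reduced Fourier--Stieltjes algebra $B_\lambda$ is operator Connes amenable. The forward direction is the easy one. If $G$ is amenable, then by Proposition \ref{basic}(iii) we have $B_{L^p}(G) = B(G)$, and $B(G)$ is the dual of $C^*(G) = C^*_r(G)$. Operator Connes amenability of $B(G)$ for amenable $G$ is precisely the Runde--Spronk result (or one can observe directly that $B(G) = B_\lambda$ in this case), so this direction is immediate.

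For the converse, suppose $B_{L^p}(G)$ is operator Connes amenable. The strategy is to transfer operator Connes amenability down to the reduced Fourier--Stieltjes algebra $B_\lambda$ and then invoke \cite{rs}. The key structural fact is that $B_\lambda$ is (completely isometrically) a quotient of $B_{L^p}(G)$ as a dual Banach (operator) algebra: dually, $C^*_{L^p}(G)$ surjects onto $C^*_r(G)$ via the identity map on $L^1(G)$ (this was recorded in Section 3, using that $\lambda$ is an $L^p$-representation so $\|\cdot\|_{L^p}$ dominates the reduced norm), hence the pre-adjoint inclusion $B_\lambda \hookrightarrow B_{L^p}(G)$ dualizes to a weak*-continuous completely contractive algebra surjection $VN_{L^p}(G) \twoheadrightarrow VN(G)$. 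I would then use the permanence property that operator Connes amenability passes to quotients by weak*-closed ideals: if $\mathcal{A}$ is an operator Connes amenable completely contractive dual Banach algebra and $\mathcal{A} \to \mathcal{B}$ is a weak*-continuous completely contractive homomorphism with weak*-dense range, then $\mathcal{B}$ is operator Connes amenable. Applying this with $\mathcal{A} = B_{L^p}(G)$ (predual $C^*_{L^p}(G)$) mapping onto $\mathcal{B} = B_\lambda$ (predual $C^*_r(G)$) gives operator Connes amenability of $B_\lambda$, and then $G$ is amenable by \cite{rs}.

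The main obstacle is verifying that operator Connes amenability genuinely descends along this particular map in the operator-space category. One has to check that $B_\lambda$ is a \emph{dual} operator algebra mapping structure is compatible — i.e., that the restriction map $B(G) \to B(G)$ restricted appropriately, or rather the transpose of the canonical quotient $C^*_{L^p}(G) \twoheadrightarrow C^*_r(G)$, is a weak*-weak*-continuous complete contraction onto a weak*-closed subalgebra of $B(G)$, which is exactly $B_\lambda$. This is essentially bookkeeping with the identifications from Section \ref{spaces} (where $B_\sigma$ is identified with $C^*_\sigma{}^*$ and containments of Fourier--Stieltjes spaces correspond to weak containment of representations), but the operator-space refinements need care. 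The cleanest route is probably to cite the general permanence result for operator Connes amenability under completely contractive weak*-continuous surjections — this is standard in the Runde--Spronk framework and appears in Runde's work on Connes amenability — rather than re-proving it; then the argument becomes short. An alternative, more self-contained route would mimic the proof in \cite{rs} directly: produce a suitable diagonal-type element or normal virtual diagonal for $B_{L^p}(G)$, push it forward, and extract a bounded approximate indicator / invariant mean witnessing amenability of $G$; but this is longer and the quotient argument is preferable.
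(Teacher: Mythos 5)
Your forward direction matches the paper. The converse, however, rests on a structural claim that is backwards. The canonical quotient $C^*_{L^p}(G)\twoheadrightarrow C^*_r(G)$ dualizes to an \emph{inclusion} $B_\lambda\hookrightarrow B_{L^p}(G)$: the dual of a quotient is a subspace, not a quotient. So $B_\lambda$ sits inside $B_{L^p}(G)$ as a weak*-closed ideal, and there is no natural weak*-continuous, surjective algebra homomorphism $B_{L^p}(G)\to B_\lambda$ to which the permanence result ``operator Connes amenability passes along weak*-continuous completely contractive homomorphisms with weak*-dense range'' could be applied. (Your mention of $VN_{L^p}(G)\twoheadrightarrow VN(G)$ conflates the Fourier-space/von Neumann picture with the Fourier--Stieltjes/$C^*$ picture; its pre-adjoint is again an inclusion, $A(G)\hookrightarrow A_{L^p}(G)$.) Connes-type amenability does not in general descend to weak*-closed ideals without extra hypotheses, so this reduction to the Runde--Spronk theorem for $B_\lambda$ has a genuine gap.

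The paper's argument sidesteps all of this. As in the proof of \cite[Theorem 4.4]{rs}, an operator Connes amenable dual Banach algebra has an identity; so $B_{L^p}(G)$ has an identity $e$. Since $A(G)\subset B_{L^p}(G)$ and $A(G)$ vanishes at no point of $G$, evaluating $eu=u$ at each $s\in G$ against some $u\in A(G)$ with $u(s)\neq 0$ forces $e\equiv 1$. Because $B_{L^p}(G)$ is an ideal of $B(G)$ containing $1$, it must equal $B(G)$, and then $G$ is amenable by Proposition \ref{basic}(iv). If you want to keep your quotient philosophy, you would have to either prove the relevant ideal-permanence statement for operator Connes amenability (which is exactly where the difficulty lies) or, as the paper does, extract the identity element directly.
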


\begin{proof}
First suppose that $G$ is amenable. Then $B_{L^p}(G)=B_\lambda(G)=B(G)$ is operator Connes amenable \cite[Theorem 4.4]{rs}.

Next suppose that $B_{L^p}(G)$ is operator Connes amenable. Then, as in the proof of \cite[Theorem 4.4]{rs}, $B_{L^p}(G)$ has an identity. So $B_{L^p}(G)=B(G)$ and, hence, $G$ is amenable by Proposition \ref{basic}.
\end{proof}

\section{Fourier-Stieltjes ideals of $SL(2,\R)$} \label{SL(2,R)}

In this section, we study the $L^p$-Fourier-Stieltjes algebras for $SL(2,\R)$ and characterize the Fourier-Stieltjes ideals of $SL(2,\R)$. The representation theory of $SL(2,\R)$ is very well understood, and this knowledge is used intimately throughout this section.

The irreducible representations of $SL(2,\R)$ fall into the following five categories:
\begin{center}
\begin{tabular}{r c l}
Trivial representation & : & $\tau$,\\
Discrete series & : & $\{T_n : n\in\Z, |n|\geq 2\}$,\\
Mock discrete series & : & $T_{-1}, T_1$,\\
Principal series & : & $\{\pi_{it,\epsilon} : t\in\R, \epsilon=\pm 1\}$,\\
Complementary series & : & $\{\pi_r : -1<r<0\}$.
\end{tabular}
\end{center}
There is no standard for the notation and parameterizations of these representations, so, for convenience, we will follow that used by Joe Repka in \cite{r} -- a paper which will refer to again. The Fell topology on these representations is also completely understood. Rather than detailing this topology we refer the reader to Folland's book \cite[Figure 7.3]{f} for a nice description.

Ray Kunze and Elias Stein studied the integrability properties of the coefficients of irreducible representations $SL(2,\R)$ and demonstrated the remarkable fact that for every nontrivial irreducible representation $\pi$ of $SL(2,\R)$, there exists a $p\in [2,\infty)$ so that $\pi_{x,x}\in L^p$ for every $x\in\Hi_\pi$. In fact, for an irreducible representation $\pi$ of $SL(2,\R)$ they showed: \cite[Theorem 10]{ks}
 \begin{itemize}
 \item $\pi$ is an element of the discrete series if and only if every coefficient function of $\pi$ is $L^2$-integrable,
 \item $\pi$ is an element of the mock discrete series or the continuous principal series if and only if every coefficient function of $\pi$ is $L^{2+\epsilon}$-integrable for every $\epsilon>0$, but not every coefficient function is $L^2$-integrable,
 \item $\pi$ is an element of the complementary series with parameter $r\in (-1,0)$ if and only if every coefficient function of $r$ is $L^{2/(1+r)+\epsilon}$-integrable for every $\epsilon>0$, but not every coefficient function is $L^{2/(1+r)}$-integrable.
 \end{itemize}
A fortiori, every nontrivial irreducible representation of $SL(2,\R)$ is an $L^p$-representation for some $p\in [2,\infty)$. We use this and a result of Repka to show that the spaces $B_{L^p}(SL(2,\R))$ are distinct for every $p\in [2,\infty)$.

\begin{lemma}\label{irreps weakly contained in Lp for SL(2,R)}
Let $G$ be the group $SL(2,\R)$. Then
\begin{itemize}
\item[(i)] The discrete series, mock discrete series, and principal series are weakly contained in the $L^p$-representations for every $p\in [2,\infty)$.
\item[(ii)] The complementary series representations $\pi_r$ is weakly contained in the $L^p$-representations (for $p\in [2,\infty)$ if and only if $r\in [2/p-1,0)$
\end{itemize}
\end{lemma}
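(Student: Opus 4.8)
The plan is to reduce everything to a statement about weak containment in the regular representation on the various quotient groups that arise, using the fact that $C^*_{L^p}$ is functorial for open subgroups (Proposition~\ref{open}) together with the explicit Kunze--Stein integrability exponents. For part~(i): since each discrete series representation $T_n$ has all coefficient functions in $L^2$, $T_n$ is square-integrable and hence (being irreducible and in $L^2$) weakly contained in $\lambda$, so it is certainly an $L^p$-representation for every $p\geq 2$ and a fortiori weakly contained in the collection of $L^p$-representations. For the mock discrete series and principal series, the Kunze--Stein theorem says every coefficient is in $L^{2+\epsilon}$ for all $\epsilon>0$; so each such $\pi$ is itself an $L^{q}$-representation for every $q>2$, hence weakly contained in the $L^q$-representations, hence (since the $L^q$-seminorm decreases as $q$ decreases past $2$ down to the reduced norm, and increases with $q$) one must check that this forces weak containment in the $L^p$-representations for the target $p$. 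The cleanest route: fix $p\in[2,\infty)$; for any $q\in(2,p]$ we have $L^q(G)\subset$ --- no, rather use that $\|\cdot\|_{L^q}\leq \|\cdot\|_{L^p}$ when $q\le p$ because an $L^q$-representation is automatically an $L^p$-representation (coefficients in $L^q\cap C_b\subset L^p\cap C_b$ is false in general, but coefficients that are \emph{bounded} and in $L^q$ need not be in $L^p$). So instead I would argue directly: a coefficient $\pi_{x,x}$ of the mock discrete or principal series lies in $L^{2+\epsilon}\cap C_b(G)\subset L^\infty$; since it is also bounded, $|\pi_{x,x}|^{p}\le \|\pi_{x,x}\|_\infty^{p-2-\epsilon}|\pi_{x,x}|^{2+\epsilon}$ whenever $p\ge 2+\epsilon$, so $\pi_{x,x}\in L^p(G)$; choosing $\epsilon$ with $2+\epsilon\le p$ (possible since $p>2$; the case $p=2$ is the discrete-series-only case but then the mock discrete and principal series are \emph{not} in $L^2$, so for $p=2$ the statement must be read as a limiting assertion and actually Lemma~\ref{irreps weakly contained in Lp for SL(2,R)}(i) as stated needs $p>2$ for mock/principal --- I would insert that caveat or note $p=2$ forces these into the weak closure via the Fell-topology limit argument below) we conclude each such $\pi$ is an $L^p$-representation, hence trivially weakly contained in the $L^p$-representations.

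For the $p=2$ boundary and more importantly for part~(ii), I would bring in the Fell topology on $\widehat{SL(2,\R)}$ (Folland, Figure~7.3) together with the standard fact that if $\pi$ is a limit in the Fell topology of representations each weakly contained in a set $\mathcal S$, then $\pi$ is weakly contained in $\mathcal S$; this is exactly what lets one push the mock discrete series and the endpoint principal series into the $L^2$-representations as limits of the discrete series, and the endpoint complementary series representation $\pi_{2/p-1}$ into the $L^p$-representations as a Fell-limit of principal series (which are $L^p$-representations for every $p>2$) or of interior complementary series. So the structure of part~(ii) is: (a) if $r\in(2/p-1,0)$, then $2/(1+r)<p$, so by Kunze--Stein every coefficient of $\pi_r$ is in $L^{2/(1+r)+\epsilon}\cap C_b\subset L^p$ for small $\epsilon$, hence $\pi_r$ is itself an $L^p$-representation; (b) the endpoint $r=2/p-1$ is handled by the Fell-topology limit argument just described; (c) for the converse, if $r<2/p-1$ then $2/(1+r)>p$ and the Kunze--Stein statement that \emph{no} coefficient of $\pi_r$ is in $L^{2/(1+r)}$ --- combined with irreducibility, so that if one coefficient were in $L^p\subset L^{2/(1+r)}$ a density/cyclicity argument would force essentially all of them --- shows $\pi_r$ is not an $L^p$-representation, and moreover not weakly contained in them.

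The genuinely hard step is the converse direction of (ii): showing $\pi_r$ is \emph{not weakly contained} in the $L^p$-representations when $r<2/p-1$, since weak non-containment is a stronger statement than ``$\pi_r$ is not itself an $L^p$-representation.'' Here I would exploit the special structure of $SL(2,\R)$: $\pi_r$ is an isolated-type piece in the sense that the map on Fourier-Stieltjes spaces is controlled by which irreducibles appear, and by the correspondence in Section~\ref{spaces} between $B_\sigma$ and the $C^*$-norm, weak containment of $\pi_r$ in $\mathcal S=\{L^p\text{-representations}\}$ is equivalent to $\|\pi_r(f)\|\le \sup_{\sigma\in\mathcal S}\|\sigma(f)\|$ for all $f\in L^1(G)$. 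I would produce a separating functional / positive-definite coefficient of $\pi_r$ that fails to extend to a state on $C^*_{L^p}(G)$: concretely, a diagonal coefficient $\varphi=(\pi_r)_{x,x}$ is positive definite and, by Kunze--Stein, lies in $L^{2/(1+r)+\epsilon}$ but in no smaller $L^s$; if $\varphi$ extended to a positive functional on $C^*_{L^p}(G)$ then (by the GNS / weak-containment machinery, using that $B_{L^p}(G)$ is the weak\(^*\)-closure of the span of $P(G)\cap L^p(G)$ and that such coefficients are, up to the Fell topology, governed by the complementary-series parameter) one would get $\varphi$ approximable by coefficients with strictly better integrability, contradicting the sharpness of the Kunze--Stein exponent $2/(1+r)$. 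Making this last contradiction precise --- i.e., turning ``sharp $L^s$-membership of a single complementary-series coefficient'' into ``failure of weak containment'' --- is where I expect to need either Repka's decomposition results for tensor products / restrictions of these representations or a direct estimate on $\|\pi_r(f)\|$ versus $\sup_{s<p}\|\lambda\otimes(\text{stuff})\|$, and it is the crux of the lemma.
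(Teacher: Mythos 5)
Your forward direction is essentially the paper's: for $r\in(2/p-1,0)$ the Kunze--Stein exponents plus boundedness of coefficients make $\pi_r$ itself an $L^p$-representation, and the endpoint $r=2/p-1$ is absorbed as a Fell-topology limit of $\pi_{r'}$ with $r'>r$. The problem is the converse of (ii), which you correctly identify as the crux but do not prove, and the mechanism you propose cannot work as stated. You want to contradict the sharpness of the Kunze--Stein exponent of a single diagonal coefficient $\varphi=(\pi_r)_{x,x}$ by arguing that a state on $C^*_{L^p}(G)$ extending $\varphi$ would force $\varphi$ to be approximable by better-integrable coefficients. But weak containment does not preserve integrability exponents: the endpoint case is itself a counterexample to this heuristic, since the coefficients of $\pi_{2/p-1}$ are only known to lie in $L^{p+\epsilon}$ for $\epsilon>0$, yet $\pi_{2/p-1}$ \emph{is} weakly contained in the $L^p$-representations by your own step (b). So ``no coefficient of $\pi_r$ lies in $L^{2/(1+r)}$'' cannot by itself obstruct weak containment. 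The paper closes this gap with two concrete inputs you only gesture at: Repka's Theorem 9.1, which shows that the direct integral decomposition of any $L^p$-representation ($p>2$) omits the representations $\pi_r$ with $-1<r<2/p-1$ off a null set, and the fact that $\{\pi_r : -1<r<2/p-1\}$ is \emph{open} in the Fell topology, which upgrades ``absent from the support of the decomposition'' to ``not weakly contained.'' Without these (or a substitute), the only nontrivial assertion of the lemma is unproved.

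A secondary issue concerns $p=2$ in part (i). Your proposed fix --- realizing the mock discrete and principal series as Fell limits of the discrete series --- is not correct: in $\widehat{SL(2,\R)}$ the mock discrete series arise as limits of the odd principal series, and the principal series are not Fell limits of discrete series, so this route is either false or circular. The paper instead invokes the Cowling--Haagerup--Howe theorem: a representation all of whose coefficients lie in $L^{2+\epsilon}$ for every $\epsilon>0$ is weakly contained in $\lambda$, hence in the $L^2$-representations. Your treatment of the discrete series and of the cases $p>2$ in part (i) is fine.
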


\begin{proof}
Let $\pi$ be a representation of $SL(2,\R)$. Then \cite[Theorem 9.1]{r} immediately implies that if $\pi$ is an $L^p$-representation for some $p>2$, then the direct integral decomposition of $\pi$ does not include the representations $\pi_r$ for $-1<r<2/p-1$ (apart from on a null set). Hence, $\pi$ does not weakly contain $\pi_r$ for any $-1<r<2/p-1$ since the set $\{\pi_r : -1<r<2/p-1\}$ is open in the Fell topology.

Note that by the results of Kunze and Stein mentioned above, $\pi_r$ is an $L^p$-representation for every $2/p-1<r<0$. Hence, the $L^p$-representations weakly contain $\pi_r$ if and only if $2/p-1\leq r<0$.

To complete our proof we must note that the mock discrete series and principal series are weakly contained in the left regular representation. But this is given by the Cowling-Haagerup-Howe theorem \cite{cch} since they are each $L^{2+\epsilon}$-representations for every $\epsilon>0$.
\end{proof}

\begin{corollary}
Let $G=SL(2,\R)$. Then the Fourier-Stieltjes spaces $B_{L^p}(G)$ are distinct for every $p\in [2,\infty)$. Equivalently, the $C^*$-algebras $C^*_{L^p}(G)$ are distinct for every $p\in [2,\infty)$.
\end{corollary}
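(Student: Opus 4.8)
The plan is to deduce everything from Lemma~\ref{irreps weakly contained in Lp for SL(2,R)}(ii) together with the correspondence recorded in Section~\ref{spaces} between weak containment of families of representations and inclusion of Fourier-Stieltjes spaces. First I would note the monotonicity $B_{L^p}(G)\subseteq B_{L^q}(G)$ whenever $2\leq p\leq q<\infty$: if $\pi$ is an $L^p$-representation, then it has a dense set of vectors $x$ with $\pi_{x,x}\in L^p(G)\cap C_b(G)\subseteq L^q(G)$, so $\pi$ is automatically an $L^q$-representation; hence the family of $L^p$-representations is weakly contained in the family of $L^q$-representations and the inclusion of the corresponding Fourier-Stieltjes spaces follows. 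Thus it suffices to exhibit, for each pair $2\leq p<q<\infty$, an element of $B_{L^q}(G)$ that does not lie in $B_{L^p}(G)$.

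Since $p<q$ we have $2/q-1<2/p-1$, and $2/p-1\leq 0$, so the set $[2/q-1,0)\setminus[2/p-1,0)=[2/q-1,2/p-1)$ is a nonempty interval (in the boundary case $p=2$ it is simply $[2/q-1,0)$). I would then choose $r$ in this difference set, so that $r\in[2/q-1,0)$ while $r<2/p-1$. By Lemma~\ref{irreps weakly contained in Lp for SL(2,R)}(ii), the complementary series representation $\pi_r$ is weakly contained in the family of $L^q$-representations but is \emph{not} weakly contained in the family of $L^p$-representations. Translating through the dictionary of Section~\ref{spaces}, this gives $B_{\pi_r}\subseteq B_{L^q}(G)$ while $B_{\pi_r}\not\subseteq B_{L^p}(G)$. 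In particular, choosing a coefficient function of $\pi_r$ witnessing the latter failure produces an element of $B_{L^q}(G)\setminus B_{L^p}(G)$, so $B_{L^p}(G)\subsetneq B_{L^q}(G)$, and the spaces $B_{L^p}(G)$ are pairwise distinct across $p\in[2,\infty)$.

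The equivalence with the statement about $C^*$-algebras is then automatic: $B_{L^p}(G)$ is the dual of $C^*_{L^p}(G)$, and the inclusion $B_{L^p}(G)\subseteq B_{L^q}(G)$ corresponds precisely to the existence of the canonical quotient map $C^*_{L^q}(G)\to C^*_{L^p}(G)$, which is an isomorphism exactly when the two Fourier-Stieltjes spaces coincide; so the $C^*_{L^p}(G)$ are distinct quotients of $C^*(G)$ iff the $B_{L^p}(G)$ are distinct. I do not expect any genuine obstacle here---the entire analytic content sits in Lemma~\ref{irreps weakly contained in Lp for SL(2,R)}---the only point requiring a moment's care is the boundary value $p=2$, where the interval $[2/p-1,0)$ degenerates to $\emptyset$ (reflecting that no complementary series representation is weakly contained in $C^*_r(G)=C^*_{L^2}(G)$), which is precisely why the difference set above remains nonempty and the argument still goes through.
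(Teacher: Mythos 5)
Your proposal is correct and follows the same route the paper intends: the corollary is stated as an immediate consequence of Lemma~\ref{irreps weakly contained in Lp for SL(2,R)}(ii) together with the Section~\ref{spaces} dictionary between weak containment and inclusion of Fourier--Stieltjes spaces, which is exactly what you spell out (including the easy monotonicity $B_{L^p}\subseteq B_{L^q}$ and the duality with the $C^*$-algebra statement).
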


We now proceed to prove the main result of this section: a characterization of the Fourier-Stieltjes ideals of $SL(2,\R)$.

\begin{theorem}
Let $I$ be a nontrivial Fourier-Stieltjes ideal of $SL(2,\R)$. Then $I=B(G)$ or $I=B_{L^p}(G)$ for some $p\in [2,\infty)$.
\end{theorem}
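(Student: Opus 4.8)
The plan is to identify Fourier-Stieltjes ideals of $G=SL(2,\R)$ with weak*-closed translation-invariant ideals of $B(G)$, and then with $G$-invariant subsets of the unitary dual $\widehat G$ that are closed under weak containment. Concretely, every Fourier-Stieltjes space $B_\sigma$ is determined by the $C^*$-algebra $C^*_\sigma$, equivalently by which irreducible representations are weakly contained in $\sigma$; and since the center acts trivially and all these representations are already self-conjugate, being an ideal forces the defining set $S\subseteq\widehat G$ to be a union of the five families listed (discrete series, mock discrete series, principal series, complementary series segment, trivial representation) that is moreover closed in the Fell topology in the appropriate one-sided sense dictated by weak containment. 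So the first step is to catalogue the weakly-closed $G$-invariant subsets of $\widehat{SL(2,\R)}$: using Folland's picture of the Fell topology and Lemma \ref{irreps weakly contained in Lp for SL(2,R)}, the discrete series, mock discrete series and principal series always come as a block (they sit in the support of $\lambda$), the complementary series forms a half-open interval $\{\pi_r : -1<r<0\}$ whose closure at the $r\to 0$ end attaches to the principal series, and $\tau$ is isolated except that it lies in the closure of the complementary series as $r\to -1$.

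Next I would translate each admissible closed invariant set into a known algebra. If $S$ contains no complementary series representation, then $S$ is contained in $\mathrm{supp}\,\lambda$, so $B_S\subseteq B_\lambda=A(G)$-weak*-closure; but $G=SL(2,\R)$ is a connected semisimple group with finite center and no compact factors, hence has no nontrivial nonzero ideals of $B_\lambda$ other than... — more precisely, since the discrete/mock/principal block is "indecomposable" for our purposes (each such representation generates, under weak containment and direct sums, all of the tempered dual), the only ideal here is $B_\lambda=B_{L^2}(G)$. If $S$ contains some $\pi_r$ with $r\in(-1,0)$, then by the Fell-topology description $S$ contains every $\pi_{r'}$ with $r\le r'<0$ together with the entire tempered block, and if $S$ does not contain $\tau$ then $S$ is exactly $\{$tempered$\}\cup\{\pi_{r'}:r\le r'<0\}$, which by Lemma \ref{irreps weakly contained in Lp for SL(2,R)}(ii) is precisely the set of irreducibles weakly contained in the $L^p$-representations for $p=2/(1+r)$; hence $B_S=B_{L^p}(G)$. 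Finally if $\tau\in S$ then (using that $\tau$ is weakly contained in the complementary series limit as $r\to -1$, so the presence of $\tau$ together with any complementary series forces all of them, and an ideal containing $\tau$ contains the constant functions hence is all of $B(G)$, or alternatively $C^*_\tau$-containment propagates) one gets $I=B(G)$. One must also handle the degenerate cases: $S$ containing $\tau$ alone would give $I=\C 1$, which is not an ideal of $B(G)$ unless it is all of $B(G)$ — this is why $C^*(G)=C^*(G)$-weak containment forces $\tau$ to drag in more, since $G$ is nonamenable so $\tau\not\prec\lambda$, but an ideal generated by $\tau$ must by translation-invariance and the ideal property be large; this rules out "$I=\C 1$" and similar small sets, leaving only the trivial ideal $\{0\}$ which we excluded by hypothesis.

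The key structural input, beyond the Fell topology picture and Lemma \ref{irreps weakly contained in Lp for SL(2,R)}, is the correspondence from Section \ref{spaces}: $B_\sigma\subseteq B_{\sigma'}$ iff $\sigma\prec\sigma'$, and $B_\sigma$ is an ideal of $B(G)$ iff $A_\sigma$ (equivalently the set of irreducibles in $\sigma$) is closed under tensoring with arbitrary representations — i.e., $S$ is absorbing under $\otimes$. So I would verify the tensor-absorption constraint is automatically satisfied by exactly the sets listed above, by invoking the known decomposition of tensor products of $SL(2,\R)$ representations (Repka's work, already cited as \cite{r}): tensoring any complementary series $\pi_r$ with anything decomposes into principal series plus (a controlled amount of) complementary series with parameters $\ge r$, never worse, which is exactly the monotonicity that makes $B_{L^p}(G)$ an ideal and forbids any "interval of complementary series not abutting $0$" from being an ideal.

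\medskip

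I expect the main obstacle to be the tensor-product bookkeeping for the complementary series: one needs that $\pi_r\otimes\pi$ for an arbitrary unitary $\pi$ does not produce complementary series parameters strictly smaller than $r$ (which would break the ideal property of $B_{L^p}(G)$ with $p=2/(1+r)$) and, conversely, that tensoring $\pi_r$ with a suitable representation does reach every parameter in $[r,0)$ and all of the tempered dual (so that the ideal generated by $\pi_r$ is exactly $B_{L^p}(G)$ and nothing smaller). Pinning down these two facts precisely from Repka's tensor-product decompositions — and checking the boundary behaviour at $r\to 0$ and $r\to -1$ matches the Fell-topology closure relations — is the technical heart; the rest is assembling the classification from the list of admissible closed invariant subsets of $\widehat{SL(2,\R)}$.
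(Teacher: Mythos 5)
Your proposal is correct and takes essentially the same route as the paper: a nontrivial Fourier--Stieltjes ideal absorbs tensoring with $\lambda$ (Fell's absorption principle) and hence contains $B_\lambda=B_{L^2}(G)$; weak containment of $\tau$ forces $I=B(G)$; and otherwise the set of complementary-series parameters present must be an interval $[r,0)$, which by Lemma \ref{irreps weakly contained in Lp for SL(2,R)} gives $I=B_{L^p}(G)$ with $p=2/(1+r)$. The single fact you defer to Repka's decompositions is exactly what the paper cites from Puk\'anszky (see also \cite[Theorem 5.9]{r}): $\pi_{r_1+r_2+1}$ is a subrepresentation of $\pi_{r_1}\otimes\pi_{r_2}$ whenever $r_1+r_2<-1$, which yields the ``reach every parameter in $[r,0)$'' direction you identify as the technical heart (the converse direction, that tensoring never produces parameters below $r$, is not needed since $B_{L^p}(G)$ is already known to be an ideal).
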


\begin{proof}
Write $I=B_\pi$ for some representation $\pi$ of $G$. Then, since $\pi\otimes \lambda$ is unitarily equivalent to an amplification of $\lambda$ by Fell's absorption principle, it is an easy exercise to see that $B_\pi\supset B_\lambda=B_{L^2}(G)$.

Consider the case when the trivial representation $\tau$ is weakly contained in $\pi$. Then $I$ contains the unit and, hence, is all of $B(G)$.

Next consider the case when $\pi$ does not contain the complementary representation $\pi_r$ for any $r\in (-1,0)$. Then, by Lemma \ref{irreps weakly contained in Lp for SL(2,R)}, $B_\pi$ is a subset of $B_{L^2}(G)$. Since we already know the reverse inclusion, we conclude that $I=B_{L^2}(G)$.

Finally, we consider the case when $\pi$ weakly contains some element of the complementary series. Let
$$r=\inf\{r'\in (-1,0) : \pi_{r'}\tn{ is weakly contained in }\pi\}.$$
Then $r>-1$ since $\pi_r$ converges to the trivial representation $\tau$ in the Fell topology as $r\to -1$. Also notice that $\pi$ weakly contains $\pi_r$ since $\pi_{r'}\to \pi_r$ in the Fell topology as $r'\to r$. In \cite{p} Puk\'anszky showed that if $r_1,r_2\in (-1,0)$ with $r_1+r_2<-1$, then $\pi_{r_1+r_2+1}$ is a subrepresentation of $\pi_{r_1}\otimes\pi_{r_2}$ (see also \cite[Theorem 5.9]{r}). Since $r+r'<-1$ for $-1<r'<-r-1$ and $\pi$ weakly contains $\pi_{r}\otimes \pi_{r'}$ for every $-1<r<0$, it follows that $\pi$ weakly contains $\pi_{r'}$ for each $r\leq r'<r$. Therefore, by Lemma \ref{irreps weakly contained in Lp for SL(2,R)}, we conclude that $I=B_{L^p}(G)$ where $p=2/(1+r)$.
\end{proof}

It is natural to wonder which other groups are the Fourier-Stieltjes ideals characterizable as above. Unfortunately this characterization does not hold for arbitrary locally compact groups $G$ per \cite{w}.

\begin{example}
Consider the free group $\F_\infty$ on countably many generators $a_1,a_2,\ldots$ and let $\F_d$ denote the subgroup generated by $a_1,\ldots,a_d$ for some $2\leq d<\infty$. For each $p\in [1,\infty)$, define
$$ D_p=\{f\in \ell^\infty(\F_\infty) : f|_{s\F_d t}\in \ell^p(s\F_d t) \tn{ for all }s,t\in \F_\infty\}.$$
Then $D_p$ is an ideal of $\ell^\infty(\F_\infty)$ which implies that $B_{D_p}$ is an ideal of $B(G)$. Moreover, in \cite[Example 2.5]{w} it was shown that $C^*_{D_p}(\F_\infty)\neq C^*_{\ell^q}(\F_\infty)$ for any $1\leq q<\infty$ and that $C^*_{D_p}(\F_\infty)$ is distinct for each $p\in [2,\infty)$. Hence, $\F_\infty$ has a continuum of Fourier-Stieltjes ideals which are not of the form $B_{\ell^p}(\F_\infty)$ for some $p\in [2,\infty)$.
\end{example}

\section*{Acknowledgements}
The author would like to thank his supervisor Brian Forrest for suggesting this problem. The author also wishes to thank both his advisors, Brian Forrest and Nico Spronk, for their valuable input. This research was conducted at the Fields Institute during the thematic program on Abstract Harmonic Analaysis, Banach and Operator Algebras while the author was supported by an NSERC Postgraduate Scholarship.

\end{document}